\date{\today}
\newtheorem{thm}{Theorem}
\newtheorem{lem}{Lemma}
\newtheorem{definition}{Definition}
\newtheorem{cor}[thm]{Corollary}
\newtheorem{prop}[thm]{Proposition}
\newtheorem{remark}{Remark}
\DeclareMathOperator{\Hom}{\textup{Hom}}
\DeclareMathOperator{\tr}{\textup{tr}_{\mathfrak{g}}}
\begin{document}
\title{On a Generalization of Local Coefficients }
\author{Carlos De la Mora}
\email{carlosdelamorac@gmail.com} %
\address{School of Mathematics\\University of East Anglia\\Norwich Research Park\\Norwich\\Norfolk\\ NR4 7TJ\\UK.}
\author{Shaun Stevens}
\email{shaun.stevens@uea.ac.uk} %
\address{School of Mathematics\\University of East Anglia\\Norwich Research Park\\Norwich\\Norfolk\\ NR4 7TJ\\UK.}

\maketitle

\section{Introduction}
We attempt to generalize the concept of local coefficients, local coefficients where first defined by Shahidi in \cite{Sh78} and have been studied extensively in consecutive works \cite{Shahidi81,Keys&Shahidi,ShPlancherel} to name a few. One of the reasons local coefficients are important is because they provide ways to define local $L$-factors and epsilon factors of the $L$--functions appearing in the Langlands-Shahidi method \cite{ShahidiBook,ShPlancherel}. Another important application is the relation that local coefficients have to the so called Plancherel measures (see Proposition \ref{Plancherel measures}). Local coefficients have only being defined in the case where we have a group that is quasi-split and for representations that are generic. In this paper we define what we call ``generalized local coefficients'', for the non-split group $\text{GL}_m(D)$, for $D$ a central division algebra over a non-archimedean local field $F$ of characteristic zero. The name ``generalized local coefficients'' is justified since in the case where $D=F$, we obtain that our generalized local coefficients are local coefficients defined by Shahidi. In fact, we developed a more general theory based on two hypothesis that we labelled $\mathbf{H}_1$ and $\mathbf{H}_2$ (Section \ref{Gen.local.coeff.}). Under these two hypothesis we defined generalized local coefficients and showed that in the quasi-split case they are a positive constant multiple of Shahidi's Local coefficients. We then showed that $\text{GL}_m(D)$ satisfies hypothesis $\mathbf{H}_1$ and $\mathbf{H}_2$.

Let us explain in more detail the ideas developed in this paper. Let $G$ be the $F$ points of a connected reductive group over $F$. We fix a maximal split torus $A$ in $G$. We let $Q=LU$ be a minimal parabolic subgroup defined over $F$, where the centralizer of $A$ in $G$ is $L$. Let $\mathfrak{g}$ denote the Lie algebra of $G$. We define $\Phi$ to be the set of roots coming form the adjoint action of $A$ on $\mathfrak{g}$. We let $\mathfrak{g}_{\alpha}$ be the Lie algebra that corresponds to a root $\alpha $. We let $\Delta$ denote the simple roots corresponding to $Q$. We say that a nilpotent element $Y\in \mathfrak{g}$, is \emph{relatively $\Delta$-regular} or just relatively regular for short if $Y$ is of the form $$Y=\sum\limits_{\alpha \in \Delta}Y_\alpha,\,Y_\alpha \neq 0,\,Y_{\alpha}\in \mathfrak{g}_\alpha.$$        
There is a one to one correspondence between subsets of $\Delta$ and standard parabolic subgroups of $G$. Let $\theta \subset \Delta$ and let $P=MN$ be the subgroup that corresponds to $\theta$. We let ${_GW}$ be the relative Weyl group with respect $A$. We use the notation $\overline{P}=M\overline{N}$ to denote the opposite parabolic to $P$. Let $K$ be the maximal compact subgroup of Bruhat--Tits \cite{Bruhat&Tits72} that satisfies $PK=G$. We consider $\tilde{w}$ an element in ${_GW}$ to be such that $\tilde{w}(\theta)=\theta'\subset\Delta$. We let $P'=M'N'$ be the parabolic subgroup that corresponds to $\theta'$. We let $\mathfrak{m}$, $\mathfrak{n}$ and $\overline{\mathfrak{n}}$ denote the Lie algebras of $M$, $N$ and $\overline{N}$, respectively. For an element $Z\in \mathfrak{g}$, we denote by $Z_\mathfrak{m}$ the projection of $Z$ into the Lie algebra of $M$. Given a relatively regular nilpotent element $Y$, we let $\varphi$ to be a co-character of $A$ with the property that $$\varphi(c)Y\varphi^{-1}(c)=c^2Y, c\in \overline{F}^{\times}.$$ 
We say that an irreducible representation $(\pi,V)$ of $G$, is \emph{$(Y,\varphi)$--generic}, if its space of degenerate Whittaker models $V_{Y,\varphi}\neq 0$ (Definition \ref{(Y,varphi) generic}). We get $Y_\mathfrak{m}=\sum_{\alpha \in \theta}Y_{\alpha}$. We say that an irreducible representation $(\sigma,W)$ of $M$, is \emph{$(Y,\varphi)$--generic} if $(\sigma,W)$ is $(Y_{\mathfrak{m}},\varphi)$--generic. We have thanks to the work of Moeglin and Waldspurger \cite{MW}, and by the work of Varma \cite{Varma}, that for a $(Y,\varphi)$--generic representation $(\pi,V)$ there exist for sufficiently large integers $n$, a sequence of compact open subgroups $G_n$ and characters $\chi_n$ of $G_n$, such that the $\chi_n$ isotypic component of $(\pi,V)$ when restricted to $G_n$ has fixed dimension equal to the dimension of degenerate Whittaker models $V_{Y,\varphi}$. Similarly for a $(Y,\varphi)$--generic representation $(\sigma,W)$ of $M$, we have for $n$ sufficiently large, a sequence of compact open subgroups $M_n$ and characters $\chi_n^M$ of $M_n$, such that the $\chi_n^M$ isotypic component of $(\sigma,W)$ when restricted to $M_n$ has fixed dimension equal to the dimension of degenerate Whittaker models $W_{Y_\mathfrak{m},\varphi}$. Let us denote by $V_n$ (resp. $W_n$) the underlying vector space of the $\chi_n$ (resp. $\chi_n^M$) isotypic component of $(\pi,V)$ (resp. $W_n$ ) when restricted to $G_n$ (resp. $M_n$). We say that $w$ and $(Y,\varphi)$ are compatible if $(\text{Ad}(w)Y)_{\mathfrak{m}}=Y_{\mathfrak{m}}$. 

Let $(\sigma,W)$ be an irreducible $(Y,\varphi)$-generic representation of $M$. Suppose that $(Y,\varphi)$ is compatible with $w$. Using the compatibility condition we get that the representation $({^w\sigma},W)$ of $M'$ is $(Y,\varphi)$--generic. We denote by $X^*(M)$ the lattice of algebraic characters of $M$ defined over $F$. Given an element $\nu \in X^*(M)\otimes_{\mathbb{Z}} \mathbb{C}$, we obtain an unramied character of $(\sigma,W)$ that we also denote by $\nu$ (Subsection \ref{Intertwining operators}). We denote by $(I(\nu,\sigma),V(\nu,\sigma))$ the representation of $G$ obtained by normalized parabolic induction form the representation $(\sigma\otimes\nu,W)$ of $M$. We also have a representation $I({^w\nu},{^w\sigma})$ obtained by normalized parabolic induction of the representation $({^w\sigma}\otimes {^w\nu},W)$. We have an intertwining operator $$A(\nu,\sigma,w):I(\nu,\sigma)\longrightarrow I({^w\nu},{^w\sigma}).$$ The hypothesis $\mathbf{H}_1$ will guarantee that $G_n \cap M=M_n$ and that the restriction of $\chi_n$ to $M$ is equal to $\chi_n^M$. Given a $v\in W_n$ we obtained by Proposition \ref{functions} a canonical function $f_{(\nu,\sigma,v)}\in V(\nu,\sigma)_n$, such that the restriction to $K$ is independent of $\nu$. Using the compatibility condition of $w$ and $(Y,\varphi)$ we are able to justify that the underlying vector space of the $\chi_n^M$ isotypic component of $(\sigma,W)$ when restricted to $M_n$ is the same as the underlying vector space of the $\chi_n^{M'}$ isotypic component of $({^w\sigma},W)$ when restricted to $M'_n$. We therefore obtain again form Proposition \ref{functions} the canonical function $f_{({^w\nu},{^w\sigma},v)}\in V({^w\nu},{^w\sigma})$. Making use of hypothesis $\mathbf{H}_2$ we obtain that every vector in $I(\nu,\sigma)_n$ is a canonical function of the form $f_{(\nu,\sigma,v)}$ for some $v\in W_n$. We have that $$A(\nu,\sigma,w)f_{(\nu,\sigma,v)}$$ must be in $I({^w\nu},{^w\sigma})_n$ and therefore has to be a canonical function. One of our main results is Theorem \ref{scalar}, where we proved that $f_{(\nu,\sigma,v)}\in V(\nu,\sigma)$ is an ``eigenvector'' for $A(\nu,\sigma,w)$ and the corresponding ``eigenvalue'' is what we call a generalized local coefficient. To be more precise, for sufficiently large $n$ we prove the existence of meromorphic function $D_{(Y,\varphi,n)}(\nu,\sigma,w)$ such that for $v\in W_n$ $$A(\nu,\sigma,w)f_{(\nu,\sigma,v)}= D_{(Y,\varphi,n)}(\nu,\sigma,w)^{-1}f_{({^w\nu},{^w\sigma},v)}.$$  The idea of finding such functions can be seen in the work of Keys \cite{Keys}, for principal series representations of $\text{SL}_2(F)$ and $\text{SU}_3(F)$. 

Two of the main properties of local coefficients is their relation to Plancherel measures and their multiplicativity property, Proposition \ref{Plancherel measures} and Corollary \ref{multiplicativity of local}, respectively. We have shown in Corollary \ref{gral. Plancherel measure} that the relation of generalized local coefficients to the Plancherel measures is a relation completely analogous to the one that local coefficents have to Plancherel measures. We also have shown in Corollary \ref{multiplicativity of geral.local} that generalized local coefficients are multiplicative.       
In Section \ref{relation to local coeff.}, we showed that in the quasi-split case, generalized local coefficients are a positive multiple of local coefficients and in the case of $\text{GL}_n(F)$ generalized local coefficients are local coefficients. In the last section we proved the existence of generalized local coefficients for the non quasi-split group $\text{GL}_m(D).$ 
\acknowledgements{The authors would like to state their gratitude to Shahidi, Waldspurger and Varma, for further explaining their work to us. This research was supported by EPSRC grant EP/H00534X/1.}
\section{Preliminaries and notation} \label{notation}

Let $F$ be a non-archimedean local field of characteristic zero, we denote by $\mathfrak{o}$ the ring of integers and by $\varpi$ a uniformizer in $F$. Let $\mathbb{G}$ be a connected reductive $F$-group and let $G=\mathbb{G}(F)$, we denote by $\overline{F}$ the algebraic closure of $F$ and we may identify $\mathbb{G}(\overline{F})$ with $\mathbb{G}$. Let $\mathfrak{g}$ be the lie algebra of $G$ and let $\tr$ be an invariant symmetric form on $\mathfrak{g}$. We have that $G$ acts on $\mathfrak{g}$ by an adiont action that we denote by $\text{Ad}$. Given $g\in G$, and $Z\in \mathfrak{g}$, we write $^gZ$ or $gZg^{-1}$ to mean $\text{Ad}(g)Z$. Similarly we write $Z^g$ or $g^{-1}Zg$ to mean $\text{Ad}(g^{-1})Z$. Let $A$ be a maximal split torus in $G$ and denote by $\Phi$ be the root system obtained by the action of $A$ on $\mathfrak{g}$. Let $X^*(A)$ and $X_{\ast}(A)$ denote characters and the co-characters of $A$ and denote by $\langle\,, \rangle$ the perfect paring between them. We let $Z_G(A)$ and $N_G(A)$ denote the centralizer and the normalizer of $A$ in $G$ respectively. We let $_GW$ denote the relative Weyl group $N_G(A)/Z_G(A)$. Let $\mathfrak{g}_{\alpha}=\lbrace X \in \mathfrak{g}: \text{Ad}(a)X=\alpha(a)X, a\in A \rbrace$. We then get a decomposition
$$\mathfrak{g}=\mathfrak{z}\bigoplus\limits_{\alpha \in \Delta}\mathfrak{g}_\alpha $$
Where $\mathfrak{z}$ denotes the lie algebra of $Z_G(A)$. Let $Y$ be a nilpotent element in $\mathfrak{g}$. Consider a co--character $\varphi:\mathbb{G}_m\rightarrow \mathbb{G}$ defined over $F$ such that $\text{Ad}(\varphi(c))Y=c^2Y$, for $c \in \overline{F}$. We would like to note that in \cite{Varma} and \cite{MW} they consider $\text{Ad}(\varphi(c))Y=c^{-2}Y$, but this change will not affect us in any serious way and it will help us to ease some notations. Given a parabolic subgroup $P=MN$ we denote by $\overline{P}=\overline{N}M$ the opposite parabolic subgroup to $P$. The co-character $\varphi$ induces a grading for $\mathfrak{g}$ $$\mathfrak{g}=\bigoplus\limits_{i \in \mathbb{Z}}\mathfrak{g}_i \text{ where } \mathfrak{g}_i=\lbrace X \in \mathfrak{g}: \text{Ad}(\varphi(c))X=c^{i}X, c\in F \rbrace $$
 Let 
 $$\mathfrak{q}=\bigoplus\limits_{i \geq 0}\mathfrak{g}_i,\, \mathfrak{u}=\bigoplus\limits_{i > 0}\mathfrak{g}_i,\,\overline{\mathfrak{u}}=\bigoplus\limits_{i < 0}\mathfrak{g}_i $$
Then $\mathfrak{q}$ is the lie algebra of some parabolic subgroup $Q(\varphi)=LU$, and $\mathfrak{u}$ (resp. $\overline{\mathfrak{u}}$) is the lie algebra of $U$ (resp. $\overline{U}$)--the unipotent radical of $Q(\varphi)$(resp. the unipotent radical of $\overline{Q}$).
  
Let $\tr$ denote a non-degenerate symmetric bilinear form on $\mathfrak{g}$ that is invariant under the adjoint action of $G$. In an attempt to be consistent with the notations in \cite{MW,Varma} we write $\tr(XZ)$ to denote $\tr(X,Z)$. Let $Y^{\#}=\lbrace X\in \mathfrak{g}: [Y,X]=0\rbrace=\lbrace X\in \mathfrak{g}: \tr(Y[X,Z])=0\text{ for all } Z\in \mathfrak{g}\rbrace$. Let $B_Y$ denote the alternating bilinear form on $\mathfrak{g}\times\mathfrak{g}$ given by $B_Y(X,Z)=\tr(Y[X,Z])$. Then $B_Y$ induces a non-degenerate alternating bilinear form on $\mathfrak{g}/Y^{\#}\times \mathfrak{g}/Y^{\#}$ that we still denote by $B_Y$. 
\medskip

\begin{lem} \label{lattice}There is a lattice $\mathcal{L}$ of $\mathfrak{g}$ that satisfies:  
\begin{itemize}

\item[1)] $\mathcal{L}=\oplus_{i\in \mathbb{Z}}\mathcal{L}_i \text{ where } \mathcal{L}_{i}=\mathfrak{g}_{i}\cap \mathcal{L} $,
\item[2)] $ \mathcal{L}_Y=\mathcal{L}/(\mathcal{L}\cap Y^{\#}) $ is self dual with respect to $\psi\circ B_Y$. Where $\psi$ is a character of $F$ of level zero (trivial in $\mathfrak{o}$ but not in $\varpi^{-1}\mathfrak{o}$). In other words $ \mathcal{L}_Y=\lbrace x\in \mathfrak{g}/Y^{\#}| \psi\circ B_Y(x,z)=1,\text{ for all }z\in \mathcal{L}_Y  \rbrace$.  
\end{itemize}     

\end{lem}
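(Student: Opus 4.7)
The plan is to construct $\mathcal{L}$ as a direct sum of lattices $\mathcal{L}_i \subset \mathfrak{g}_i$, chosen so that the induced lattices in $\mathfrak{g}/Y^\#$ pair correctly under $\psi \circ B_Y$. First I would note that the grading is well behaved on both sides of the setup: since $Y \in \mathfrak{g}_2$, the map $\text{ad}(Y)\colon \mathfrak{g} \to \mathfrak{g}$ is homogeneous of degree $2$, so $Y^\# = \ker \text{ad}(Y)$ decomposes as $\bigoplus_i (Y^\# \cap \mathfrak{g}_i)$ and the quotient $\mathfrak{g}/Y^\#$ inherits a grading with graded pieces $\bar{\mathfrak{g}}_i := \mathfrak{g}_i/(Y^\# \cap \mathfrak{g}_i)$. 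Next, the $G$-invariance of $\tr$ applied to $\varphi(F^\times) \subset G$ forces $\tr(\mathfrak{g}_i, \mathfrak{g}_j) = 0$ whenever $i + j \neq 0$, hence $B_Y(\bar{\mathfrak{g}}_i, \bar{\mathfrak{g}}_j) = 0$ whenever $i + j \neq -2$. Combined with the non-degeneracy of $B_Y$ on $\mathfrak{g}/Y^\#$, this yields perfect pairings $\bar{\mathfrak{g}}_i \times \bar{\mathfrak{g}}_{-i-2} \to F$ for every $i$; in particular, $B_Y$ restricts to a non-degenerate alternating (symplectic) form on $\bar{\mathfrak{g}}_{-1}$.

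I would then select lattices $L_i \subset \bar{\mathfrak{g}}_i$ following the grading. For each $i > -1$, pick an arbitrary lattice $L_i \subset \bar{\mathfrak{g}}_i$ and set $L_{-i-2} \subset \bar{\mathfrak{g}}_{-i-2}$ to be its $\psi \circ B_Y$-dual; since $\psi$ has level zero, this dual is again a lattice, and as $i$ ranges over integers $> -1$ this covers every index $i \neq -1$ exactly once. For $i = -1$, I would invoke the standard fact that every non-degenerate symplectic $F$-vector space admits a self-dual lattice with respect to $\psi \circ B_Y$: pick a symplectic basis and take its $\mathfrak{o}$-span, which is self-dual precisely because $\psi$ is trivial on $\mathfrak{o}$ and non-trivial on $\varpi^{-1}\mathfrak{o}$.

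Finally, I would lift everything to $\mathfrak{g}$ degree by degree. For each $i$, pick a vector-space section $s_i\colon \bar{\mathfrak{g}}_i \to \mathfrak{g}_i$ of the quotient map, together with any $\mathfrak{o}$-lattice $N_i \subset Y^\# \cap \mathfrak{g}_i$, and set $\mathcal{L}_i := s_i(L_i) \oplus N_i$; put $\mathcal{L} := \bigoplus_i \mathcal{L}_i$. By construction $\mathcal{L} \cap \mathfrak{g}_i = \mathcal{L}_i$, giving property (1); and $\mathcal{L} \cap Y^\# = \bigoplus_i N_i$, so $\mathcal{L}_Y \cong \bigoplus_i L_i$, whose self-duality with respect to $\psi \circ B_Y$ is immediate from the duality between $L_i$ and $L_{-i-2}$ (and the self-duality at $i = -1$). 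The only nontrivial existence input is the self-dual lattice in the symplectic piece $\bar{\mathfrak{g}}_{-1}$; everything else is routine bookkeeping with graded vector spaces, so the full proof should be quite short.
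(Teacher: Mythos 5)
The paper states Lemma~\ref{lattice} without giving a proof; it is imported (implicitly) from Moeglin--Waldspurger \cite{MW} and Varma \cite{Varma}, where the existence of such graded self-dual lattices is the starting point for the theory of degenerate Whittaker models, so there is no in-paper argument to compare against. That said, your argument is correct and is essentially the standard construction. The key observations are all in order: since $\mathrm{Ad}(\varphi(c))Y = c^2Y$ we have $Y\in\mathfrak{g}_2$, so $\mathrm{ad}(Y)$ is homogeneous of degree $2$ and $Y^\#=\ker\mathrm{ad}(Y)$ is graded; invariance of $\tr$ under $\varphi(F^\times)$ forces $\tr(\mathfrak{g}_i,\mathfrak{g}_j)=0$ for $i+j\neq0$, hence $B_Y$ pairs $\bar{\mathfrak{g}}_i$ perfectly with $\bar{\mathfrak{g}}_{-i-2}$ and restricts to a symplectic form on $\bar{\mathfrak{g}}_{-1}$; one then builds the lattice degree by degree (dual pairs for $i\neq-1$, a self-dual symplectic lattice for $i=-1$) and lifts through graded sections. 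Two points worth making explicit for a polished write-up: first, the perfect pairing guarantees $\dim\bar{\mathfrak{g}}_i=\dim\bar{\mathfrak{g}}_{-i-2}$, which is what makes the dual-lattice bookkeeping coherent; second, one should verify that $\mathcal{L}_i\cap Y^\#=N_i$ (this holds precisely because $s_i$ is a section, so $s_i(\ell)\in Y^\#$ forces $\ell=0$), which is what identifies $\mathcal{L}_Y$ with $\bigoplus_iL_i$. You might also note that in the paper's setting, where $Y$ is relatively $\Delta$-regular, the cocharacter $\varphi$ satisfies $\langle\alpha,\varphi\rangle=2$ for all $\alpha\in\Delta$, so $\mathfrak{g}_i=0$ for all odd $i$; in particular $\bar{\mathfrak{g}}_{-1}=0$ and the symplectic step of your construction is vacuous there, though your argument covers the general case.
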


Let $f$ be a function on $\mathfrak{g}$ that is locally constant of compact support. We denote by $\mu_{\mathfrak{g}}$ the Haar measure on $\mathfrak{g}$. We then define $$\widehat{f}(Y)=\int\limits_{\mathfrak{g}}f(X)\psi(\tr(X,Y))d\mu_{\mathfrak{g}}(Y),\, Y\in \mathfrak{g}.$$   
We get that $\widehat{f}$ is a function on $\mathfrak{g}$ that is locally constant of compact support. Let $\mathcal{O}$ denote the orbit of $X \in \mathfrak{g}$ under the adjoint action. We let $Stab_G(X)$ denote the Stabilizer under this action. It is known that $Stab_G(X)$ is unimodular and therefore induces an invariant measure on $G/Stab_G(X)$ unique up to a constant. We have by a result Rao, also attributed independently to Deligne \cite{Rao,DeBacker&Sally}, that the integral $$\mu_{\mathcal{O}}(f)=\int_{G/Stab_{G}(X)}f(\text{Ad}(x)X)dx$$ converges for $f$ locally constant of compact support on $\mathfrak{g}$. We then define $\widehat{\mu_{\mathcal{O}}}(f)=\mu_{\mathcal{O}}(\widehat{f})$. Let $(\pi,W)$ be an irreducible smooth representation of $G$. There is a theorem of Harish--Chandra \cite{DeBacker&Sally}, that shows that 
\begin{equation}
\text{tr}(\pi)=\sum\limits_{\mathcal{O}\in \mathcal{O}(\mathfrak{g})} C_{\mathcal{O}}\widehat{\mu_{\mathcal{O}}} \label{Harish_Chandra character formula} \tag{$\ast$}
\end{equation}
Where $\mathcal{O}(\mathfrak{g})$ denotes the nilpotent orbits of $\mathfrak{g}$ and $C_{\mathcal{O}}$ are complex numbers. We can be more explicit about the Harish--Chandra character formula (\ref{Harish_Chandra character formula}). Indeed, there exists an open set $\mathscr{U}$ around $0\in \mathfrak{g}$, homeomorphic to an open set $\mathscr{V}$ of the identity in $G$ under the exponenital map $\exp:\mathscr{U}\longrightarrow \mathscr{V}$, the inverse is given by the map $\log:\mathscr{V}\longrightarrow \mathscr{U} $. Then, for every continuous function $f$ of compact support in $\mathscr{V}$ we get 

										$$\text{tr}(\pi(f))=\sum\limits_{\mathcal{O}\in \mathcal{O}(\mathfrak{g})} C_{\mathcal{O}}\widehat{\mu_{\mathcal{O}}}(f\circ\text{exp})$$

We impose an order on the set of nilpotent orbits by saying $\mathcal{O}'\leq \mathcal{O}$, if $\mathcal{O}'\subset \overline{\mathcal{O}}$. Where $\overline{\mathcal{O}}$ denotes the closure in the topology coming from $\mathfrak{g}$.
\begin{definition}\label{Y--generic} Let $(\pi,W)$ be an irreducible representation of $G$, and let $\mathcal{O}_Y$ denote the orbit of $Y$. We say that that $(\pi,W)$ is $Y$--\emph{generic}, if $\mathcal{O}_Y$ is maximal with respect to the property that the coefficient $c_{\mathcal{O}_Y}$ in the Harish-Chandra character expansion of $(\pi,W)$ like above is not zero.
\end{definition}
The fact that Definition \ref{Y--generic} is indeed a generalization of generic representations for quasi-split groups will be clear from Theorem \ref{2}. The next theorem summarizes some of the results in \cite{MW,Varma}.        

\begin{thm}\label{G_n} Let $Y$, $Q(\varphi)$ and $\mathcal{L}$ be as before. There is a positive integer $B$ such that for all integers $n>B$.  

\begin{itemize}
\item[1\text{)}] $\exp(\varpi^n\mathcal{L})$ is a subgroup of $G$ that we denote by $G_n$,
\item[2\text{)}] $G_n$ has an Iwahori decomposition with respect to every standard parabolic.  
\end{itemize}
Let $(\pi,W)$ be a $Y$--generic representation of $G$ and suppose further that $\mathcal{O}_Y$ is maximal. After choosing an appropriate measure for $G$, which induces a measure on $\mathfrak{g}$ as in \cite[1039]{Varma}, we get that there exists a character $\chi_n$ of $G_n$, trivial on $Q(\varphi)\cap G_n$, such that the vector space $W_n=\lbrace w\in W:\pi(x)w=\chi(x)w \rbrace$ has dimension $C_{\mathcal{O}_Y}$.
\end{thm}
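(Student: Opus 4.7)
The plan is to follow the strategy of Moeglin--Waldspurger, as refined by Varma, and to construct $G_n$ as the image under the exponential map of the compact lattice $\varpi^n \mathcal{L}$, then to realize $W_n$ as the image of a projection operator whose trace can be computed against the Harish--Chandra character expansion \eqref{Harish_Chandra character formula}.

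First I would verify parts (1) and (2). Because $\mathcal{L}$ is an $\mathfrak{o}$-lattice and the exponential converges on sufficiently small neighbourhoods of $0$, there is some $B>0$ so that for $n>B$ the Baker--Campbell--Hausdorff series converges on $\varpi^n \mathcal{L}\times \varpi^n \mathcal{L}$ and lands inside $\varpi^n \mathcal{L}$; hence $G_n=\exp(\varpi^n\mathcal{L})$ is a group. For the Iwahori decomposition with respect to a standard parabolic $P=MN$ one uses the fact that $\mathcal{L}$ decomposes as $\mathcal{L}=(\mathcal{L}\cap \overline{\mathfrak{n}})\oplus(\mathcal{L}\cap\mathfrak{m})\oplus(\mathcal{L}\cap \mathfrak{n})$; after enlarging $B$ if necessary, the exponential map on each graded piece is multiplicative up to the required order, giving the factorization $G_n=(G_n\cap \overline{N})(G_n\cap M)(G_n\cap N)$.

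Next I would define the character $\chi_n$. The natural candidate is
\[
\chi_n(\exp X) = \psi\bigl(\varpi^{-2n}\tr(YX)\bigr), \qquad X\in \varpi^n\mathcal{L}.
\]
The normalization by $\varpi^{-2n}$ is forced by the homogeneity $\mathrm{Ad}(\varphi(c))Y=c^2Y$, so that after transport via $\varphi(\varpi^{-n})$ the relevant pairing becomes $\psi\circ B_Y$ on $\mathcal{L}_Y$, and self-duality of $\mathcal{L}_Y$ from Lemma \ref{lattice} makes this well-defined. One checks that $\chi_n$ is a character by expanding BCH to second order and noting that the quadratic correction involves $\tr(Y[X_1,X_2])=B_Y(X_1,X_2)$ which is absorbed by symmetry/antisymmetry considerations after suitable renormalization (this is exactly the point that requires the form $\psi\circ B_Y$). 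Triviality on $Q(\varphi)\cap G_n$ is immediate since $\mathfrak{q}=\bigoplus_{i\geq 0}\mathfrak{g}_i$, and $Y\in\mathfrak{g}_2$ forces $\tr(YX)=0$ for every $X\in\mathfrak{g}_i$ with $i\geq 0$ (by invariance of $\tr$ and grading).

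Finally, to compute $\dim W_n$ I would apply the projector $e_n=\mu(G_n)^{-1}\int_{G_n}\overline{\chi_n(g)}\,\pi(g)\,dg$ and identify $\dim W_n = \mathrm{tr}\,\pi(e_n)$. Pulling back to $\mathfrak{g}$ via $\exp$ (valid since $\varpi^n\mathcal{L}\subset\mathscr{U}$ for $n$ large) and using the character expansion, this becomes
\[
\dim W_n = \sum_{\mathcal{O}\in\mathcal{O}(\mathfrak{g})} C_{\mathcal{O}}\,\widehat{\mu_{\mathcal{O}}}(\mathbf{1}_{\varpi^n\mathcal{L}}\cdot \psi(\tr(Y\cdot))).
\]
A change of variables by $\mathrm{Ad}(\varphi(\varpi^{-n}))$ and the standard homogeneity of nilpotent orbital integrals turns each summand into $\widehat{\mu_{\mathcal{O}}}$ applied to the characteristic function of a translate of a self-dual lattice at $Y$. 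For $\mathcal{O}$ not contained in $\overline{\mathcal{O}_Y}$ the orbit does not meet the support and the term vanishes; for $\mathcal{O}<\mathcal{O}_Y$ a dimension/degree estimate (using self-duality of $\mathcal{L}_Y$ to pair the growth of the orbital integral against the shrinking lattice) gives cancellation; the maximality hypothesis on $\mathcal{O}_Y$ rules out the remaining orbits, and the $\mathcal{O}_Y$-term is normalized to equal $1$ by the choice of measures in \cite[p.~1039]{Varma}. Hence $\dim W_n = C_{\mathcal{O}_Y}$.

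The main obstacle is the last step: controlling the contribution of smaller and incomparable nilpotent orbits, which is precisely where the delicate homogeneity arguments of Moeglin--Waldspurger and Varma enter, and where the self-duality clause in Lemma \ref{lattice} is essential to avoid an inequality and get an equality.
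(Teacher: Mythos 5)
You should first note that the paper does \emph{not} prove Theorem~\ref{G_n}: it is stated explicitly as a summary of results from Moeglin--Waldspurger and Varma, with the final assertion attributed to Lemma~7 of \cite{Varma}. So there is no in-house proof to compare against, and the right question is whether your sketch faithfully reconstructs the cited argument. On that score the overall architecture is right: $G_n=\exp(\varpi^n\mathcal{L})$ with BCH convergence for $n$ large, the Iwahori decomposition from the graded lattice $\mathcal{L}=\bigoplus_i\mathcal{L}_i$, the candidate character $\chi_n(\exp X)=\psi(\varpi^{-2n}\tr(YX))$ which is trivial on $\mathfrak{q}\cap\varpi^n\mathcal{L}$ because $Y\in\mathfrak{g}_2$ and $\tr$ pairs $\mathfrak{g}_i$ with $\mathfrak{g}_{-i}$, and the identification $\dim W_n=\mathrm{tr}\,\pi(e_n)$ followed by the character expansion and a homogeneity change of variables by $\mathrm{Ad}(\varphi(\varpi^{-n}))$. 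These are exactly the moves in \cite{MW,Varma}.

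Two points in your reconstruction are imprecise and worth flagging. First, the verification that $\chi_n$ is genuinely multiplicative on $G_n$ is not a matter of ``symmetry/antisymmetry considerations'' absorbing the BCH correction: one needs $\psi(\tfrac{1}{2}B_Y(\varpi^{-n}X_1,\varpi^{-n}X_2))=1$ together with control of the higher BCH terms, and the factor $\tfrac12$ is precisely one of the delicate points in residue characteristic $2$ that motivates Varma's refinement of Moeglin--Waldspurger (indeed in \cite{MW,Varma} the character is built via the Iwahori decomposition, declaring it trivial on $Q_n$ and defined by $\psi\circ\tr(\varpi^{-2n}Y\log(\cdot))$ on $\overline{U}_n$, and then one proves the commutator constraint needed for this to be consistent). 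Second, the contributions of orbits $\mathcal{O}\neq\mathcal{O}_Y$ do not ``cancel'': after Fourier transform and rescaling one is computing orbital integrals over shrinking neighbourhoods of $Y$, and those with $Y\notin\overline{\mathcal{O}}$ vanish for $n$ large; the maximality of $\mathcal{O}_Y$ does not by itself exclude orbits incomparable to $\mathcal{O}_Y$ --- they too require this vanishing argument. With those corrections your sketch matches the cited proof, and you are right that the real work is concentrated in the final orbital-integral estimate where the self-duality of $\mathcal{L}_Y$ is used to pin the $\mathcal{O}_Y$-term to exactly $1$.
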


The last paragraph of Theorem \ref{G_n} corresponds to Lemma 7 in \cite{Varma}. The groups $G_n$ and the characters $\chi_n$ of Theorem \ref{G_n} will be crucial in the rest of the paper. We will be more explicit about how the characters $\chi_n$ are defined, from now we just introduce some more notation, we write $X_n$ to denote $X\cap G_n$, for any subset $X$ of $G$. If $(\pi,W)$ is a representation of $G$, we write $(\pi,W)^{G_n,\chi_n}$ or just by $W_n$ if there is no confusion, to denote the $\chi_n$ isotypic component of the representation $(\pi,W)$ restricted to $G_n$.

We fix from now on a minimal parabolic subgroup $Q=LU$, defined over $F$, with maximal split component $A$. Let $\Delta$ denote the set of simple roots in $X_{\ast}(A)$ corresponding to $Q$. We also have a set of positive roots that we denote by $\Phi^{+}$, and a set of negative roots that we denote by $\Phi^{-}$, determined by $Q$. 
\begin{definition} Let $Y\in \mathfrak{g}$ be nilpotent element. We say that $Y$ is a \emph{relatively regular (relatively $\Delta$--regular)} nilpotent element, if is of the form $$\sum\limits_{\alpha \in \Delta}Y_\alpha\text{ where }Y_\alpha \in \mathfrak{g_\alpha}\,\, Y_\alpha\neq 0.$$ Since we work with a fixed $Q$ and thus a fixed $\Delta$, we usually drop the symbol $\Delta$ and just talk about \emph{relatively regular} nilpotent element.\end{definition} We have that there is a co-character $\varphi$ in $X_{\ast}(A)$, such that $\langle \alpha, \varphi \rangle =2$, for all $\alpha \in \Delta$. Therefore the character $\varphi$, has to satisfy that for $\alpha \in \Phi^{+}$, $\langle \alpha, \varphi \rangle >0$ and for $\alpha \in \Phi^{-}$, $\langle \alpha, \varphi \rangle <0$. We conclude that $Q(\varphi)=Q$. If for a relatively regular nilpotent element $Y$, we have a $\varphi \in X_{\ast}(A)$, such that $\text{Ad}(\varphi(c))Y=c^{2}Y$, for $c\in \overline{F}$, we get $\langle \alpha, \varphi \rangle =2$, for all $\alpha \in \Delta$, and thus $Q(\varphi)=Q$. We denote by $(Y,\varphi)$ a pair given by a relatively regular nilpotent element $Y$, and a co-character $\varphi \in X_{\ast}(A)$, such that $\text{Ad}(\varphi(c))Y=c^{2}Y$, for $c\in \overline{F}$.

We obtain a character $\chi:\overline{U}\longrightarrow \mathbb{C}^{\times}$ given by $\chi(\gamma)=\psi(\tr(Y\log(\gamma)))$. We denote by $W_{Y,\varphi}$ (or $W_{Y,\,\overline{U}}$) the twisted Jaquet functor obtained by taking the quotient of $W$ by the space spanned by $$\lbrace\pi(\bar{u})w-\chi(\bar{u})w\rbrace,\,\bar{u}\in \overline{U}.$$ Using the definition given \cite[Pg. 428]{MW} or the definition given in the introduction of \cite[Pg. 1028]{Varma} in the case where $\mathfrak{g}_1=0$, we define the vector space of \emph{degenerate Whittaker forms} to be the space $W_{Y,\,\overline{U}}$. We note that in our situation $\mathfrak{g}_1=0$ because $Y$ is relatively regular nilpotent. 

There is a connection that we need to specify between $\chi$ and the characters $\chi_n$ in Theorem \ref{G_n}. Let $(Y,\varphi )$ be the pair used to define $\chi$, and let $\varphi(\varpi)=t$. Then $\chi_n(qj)=\chi(t^njt^{-n})$ for $q\in Q_n,\, j\in \overline{U}_n$. We get
\begin{align*}\chi_n(qj)=&\chi(t^njt^{-n})=\psi(\tr(Y\log(t^njt^{-n})))\\
=&\psi(\tr(t^{-n}Yt^{n}\log(j)))=\psi(\tr(\varpi^{-2n}Y\log(j)))
\end{align*}
We also point out that $\chi_n$ is trivial in $Q_n$.
\begin{definition} \label{(Y,varphi) generic} Let $(\pi, W)$ be a representation of $G$, and let $\mathcal{O}_Y$ denote the orbit of $Y$. We say that that $(\pi,W)$ is  $(Y,\varphi)$--\emph{generic}, if $\mathcal{O}_Y$ is maximal with respect to the property that $W_{Y,\varphi}\neq 0$. 
\end{definition}

In the case where $Y$ is relatively regular we have that $\mathcal{O}_Y$, the orbit of $Y$ under the adjoint action of $G$, is a maximal orbit. We then get out of Theorem 1 in \cite{Varma} the following result.  
\begin{thm}\label{2} 

\begin{itemize}
\item[]
\item[i)]The representation $(\pi,W)$ of $G$ is $Y$--generic for a relatively regular nilpotent $Y$ if and only if is $(Y,\varphi)$--generic some $\varphi$.
\item[ii)]Let $(\pi,W)$ of $G$ be $(Y,\varphi)$--generic and let $B$ and $W_n$ be as in Theorem \ref{G_n}. The dimension of the space of degenerate Whittaker forms is equal to $C_{\mathcal{O}_Y}$ and thus for $n>B$,  equal to the dimension of the space $W_n$.
\end{itemize}
 
\end{thm}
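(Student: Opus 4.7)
The plan is to derive both statements directly from the work of Moeglin--Waldspurger and Varma cited in the excerpt, so the argument is really an exercise in matching setups and invoking the appropriate theorems rather than in producing any new content.

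The first step is to observe that when $Y$ is relatively $\Delta$-regular, its orbit $\mathcal{O}_Y$ under the adjoint action of $G$ is maximal among nilpotent orbits in $\mathfrak{g}$. The cocharacter $\varphi$ with $\langle \alpha,\varphi\rangle=2$ for every $\alpha\in\Delta$ produces a grading with $\mathfrak{g}_0=\mathfrak{l}$, with $\mathfrak{g}_1=0$, and with $\mathfrak{g}_2$ containing $\bigoplus_{\alpha\in\Delta}\mathfrak{g}_\alpha$; a relatively regular $Y$ then lies in the open $L$-orbit on $\mathfrak{g}_2$, and the corresponding $G$-orbit is a principal (hence maximal) distinguished nilpotent orbit. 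With $\mathcal{O}_Y$ known to be maximal, one is squarely in the setting of Theorem~\ref{G_n} (which packages Varma's Lemma~7) for the pair $(Y,\varphi)$.

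For part (i), I would invoke Theorem~1 of \cite{Varma}, which asserts that for a maximal orbit the coefficient $C_{\mathcal{O}_Y}$ in the Harish--Chandra character expansion is non-zero precisely when the degenerate Whittaker space $W_{Y,\varphi}$ is non-zero. Given the maximality of $\mathcal{O}_Y$ for relatively regular $Y$, this is exactly the equivalence of Definitions~\ref{Y--generic} and \ref{(Y,varphi) generic}. For part (ii), I would combine the last assertion of Theorem~\ref{G_n}, namely $\dim W_n = C_{\mathcal{O}_Y}$ for $n>B$, with the equality $\dim W_{Y,\varphi}=C_{\mathcal{O}_Y}$ established in \cite{MW,Varma}; together these yield $\dim W_{Y,\varphi}=\dim W_n=C_{\mathcal{O}_Y}$.

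The main obstacle is purely bookkeeping. The cited papers use the opposite sign convention ($c^{-2}$ rather than $c^2$) for the action of $\varphi$ on $Y$, so one must check mechanically that replacing $\varphi$ by $\varphi^{-1}$ interchanges $U$ with $\overline{U}$ and transports the grading and the character $\chi$ coherently, without affecting the statements. One also has to verify that the Jacquet-module definition of $W_{Y,\varphi}$ used here agrees with the ``space of degenerate Whittaker forms'' in \cite{MW,Varma}; this is immediate because $\mathfrak{g}_1=0$ for relatively regular $Y$, so the symplectic module on which their more general construction rests is trivial and the two definitions reduce to the same twisted Jacquet functor.
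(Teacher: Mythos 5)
Your proposal follows exactly the route the paper itself takes: Theorem~\ref{2} is stated in the paper without a separate argument, as an immediate consequence of Theorem~1 in \cite{Varma} together with the dimension count of Theorem~\ref{G_n}, once one notes that $\mathcal{O}_Y$ is maximal for relatively regular $Y$ and that $\mathfrak{g}_1=0$ so the degenerate Whittaker space reduces to the twisted Jacquet functor, which is precisely your chain of reasoning. The only caution is that the parenthetical justification of maximality via ``principal (hence maximal) distinguished nilpotent orbit'' is really a split/quasi-split heuristic that does not transfer verbatim to the non-quasi-split setting (for $\mathrm{GL}_m(D)$ the relatively regular $G$-orbit is not the regular orbit of $\mathfrak{gl}_{md}(\overline{F})$); the paper simply asserts maximality of $\mathcal{O}_Y$ among nilpotent $G$-orbits in $\mathfrak{g}$ and one should invoke that assertion directly rather than the $\overline{F}$-level notion.
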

We then introduce the convention that every time we use the objects $W_n$, $G_n$ or $\chi_n$, we mean that $n$ is sufficiently large so that Theorem \ref{G_n} (and thus Theorem \ref{2}) is  satisfied.
\begin{remark} We do not need the element $Y$ to be relatively regular to have a good notion of degenerate Whittaker forms with respect to some pair $(Y,\varphi)$. We could have changed Definition \ref{2} to say that a representation $(\pi,W)$ is  $(Y,\varphi)$--\emph{generic}, if $\mathcal{O}_Y$ is maximal with respect to the property that the space of degenerate Whittaker models with respect to some pair $(Y,\varphi)$ is non--zero. With this new definition it will follow from the work of \cite{MW,Varma} that the notion of $Y$--generic and $(Y,\varphi)$--generic are equivalent. We did not go through with this more general situation because the definition of degenerate Whittaker forms for an arbitrary nilpotent element $Y$ is considerably more complicated and will not help us in the sequel. 
\end{remark}
\subsection{Intertwining Operators}\label{Intertwining operators}

Let $P=MN$ be a parabolic subgroup with left Haar measure $\mu_P$. Define $\delta_P$ to be the character that satisfies the formula $$\delta_P(g)\int f(xg)d\mu_P(x)=\int f(x)d\mu_P(x),\,g\in P.$$ We get that $\delta_P(m)=\text{det}|\text{Ad}_{\mathfrak{n}}(m^{-1})|$. Let $X^*(M)$ be the algebraic characters of $M$ defined over $F$. We have a map  
										$$H_M:M\longrightarrow \mathfrak{a}=\Hom(X^*(M),\mathbb{R})$$ 
defined by 
										$$ q^{\langle \chi, H_M(m)\rangle}=|\chi(m)|,$$
$\chi\in X^*(M),\,m\in M$. We let $\mathfrak{a}^{*}=X^{*}(M)\otimes_{\mathbb{Z}}\mathbb{R}$, which is the dual of $\mathfrak{a}$. We let $\mathfrak{a}_{\mathbb{C}}^*=\mathfrak{a}\otimes_{\mathbb{R}}\mathbb{C}$. Given $\nu \in \mathfrak{a}_{\mathbb{C}}^*$, we define an unramified character of $M$, that we also denote by $\nu$, by the formula $\nu(m)=q^{\langle \nu, H_M(m)\rangle}$.

We get for any subset $\theta$ of $\Delta$, we get in the usual way a standard parabolic subgroup $P_\theta=M_\theta N_\theta $. Let $\theta'$ be also a subset of $\Delta$ such that there is an element $\tilde{w}\in {_GW}$ satisfying $\tilde{w}(\theta)=\theta'$. We also have a parabolic subgroup $P_{\theta'}=M_{\theta'} N_{\theta'} $ associated to $\theta'$ and thus satisfies that $^wM=M'$. Let $w\in G$, be a representative of $\tilde{w}$ in $_GW$. Let $(\sigma ,W)$ be an irreducible representation of $M$ and choose $\nu \in \mathfrak{a}_{\mathbb{C}}^* $, we define $$I(\nu,\sigma)=\text{Ind}_{P_\theta}^G(\sigma \otimes \nu\delta_{P_{\theta'}}^{-1/2})$$ 
We denote by $V(\nu,\sigma)$ the space of functions where $I(\nu,\sigma)$ acts by right translations. Consider the representation $^w\sigma$ and the unramified character $^w\nu$ of $M'$ given by $$^w\sigma(x)=\sigma(w^{-1}xw),\text{ and } ^w\nu(x)=\nu(w^{-1}xw).$$ We then have an induced representation $$I(^w\nu,{^w\sigma})=\text{Ind}_{P_{\theta'}}^G(w^\sigma \otimes ^w\nu\delta_{P_{\theta'}}^{-1/2}).$$ 
We define the {\it intertwining operator} $A(\nu,\sigma,w)$ first by $$A(\nu,\sigma,w)f(g)=\int\limits_{w\overline{N}w^{-1}\cap U}f(w^{-1}ng)dn,\text{ for }f\in V(\nu,\sigma),$$ 
where we know that the integral is guaranteed to converge absolutely on a positive cone in $\mathfrak{a}_{\mathbb{C}}^*$. If for a fixed $\nu$ the integral defining $A(\nu,\sigma,w)$ converges for every $f\in V(\nu,\sigma)$ and every $g\in G$, we get that $A(\nu,\sigma,w)$ is an intertwining operator from $I(\nu,\sigma)$ into $I(^w\nu,{^w\sigma})$. Moreover, letting $\nu$ vary we get that $A(\nu,\sigma,w)$ is meromorphic on the variable $\nu$ and thus can be extended by analytic continuation to some bigger open set. Let us be more precise on the use of the word meromorphic. We follow closely the explanation of term meromorphic given in \cite[IV.1]{Waldspurger}. The set $\mathcal{X}(M)$ of unramified characters of $M$ is a complex algebraic variety. Let $\mathfrak{B}$ denote the space of polynomial functions of $\mathcal{X}(M)$. Let 			      $$\mathcal{O}_\mathbb{C}=\left\lbrace \sigma\otimes\nu:\nu \in \mathfrak{a}_{\mathbb{C}}^* \right\rbrace.$$ 
The variety $\mathcal{X}(M)$ induces a structure of complex algebraic variety on $\mathcal{O}_\mathbb{C}$. To ease the notation we write $P,P'$ instead of $P_{\theta},P_{\theta'}$, respectively. We have the existence of a maximal compact subgroup $K$ of $G$, with the property that $G=PK=P'K$. We then have an isomorphism of vector spaces from $I(\nu,\sigma)$ (resp. $I(^w\nu,{^w\sigma})$) into $\text{Ind}_{K\cap P}^K \sigma$ (resp. $\text{Ind}_{K\cap P}^K \sigma$) given by restriction $\text{Res}^P_K$ (resp. $\text{Res}^{P'}_K)$. Saying that $A(\nu,\sigma,w)$ is meromorphic means that there exists a Zarinski open set $\mathcal{U}$ of $\mathcal{O}_{\mathbb{C}}$, an element $b\in \mathfrak{B}$, such that for all $\sigma\otimes \nu \in \mathcal{U}$ and all $f\in \text{Ind}_{K\cap P}^K$, there exist $\left\lbrace f_1,\ldots f_r\right\rbrace$ and $\left\lbrace b_1,\ldots b_r\right\rbrace$ satisfying $$b(\nu)\text{Res}^{P'}_K\circ A(\nu,\sigma,w)\circ(\text{Res}^P_K)^{-1}f=\sum_{i=1}^rb_i(\nu)f_i$$  

\subsubsection{Restriction of Intertwining Operators.} Let $\theta_{\ast}\subset \theta$, then $\tilde{w}(\theta_{\ast})\subset\Delta$. We let $P_{\ast}=M_{\ast}N_{\ast}$ and $P'_{\ast}=M'_{\ast}N'_{\ast}$, be the standard parabolic subgroups corresponding to $\theta_{\ast}$ and $\tilde{w}(\theta_{\ast})$ respectively. We note that $P_{\ast}\subset P$, $M_\ast \subset M$, and $N\subset N_{\ast}$. Let $(\sigma_{\ast},W_\ast)$ be a representation of $M_{\ast}$. Let $\mathfrak{a}^*_{\ast}=X(A_{\ast})\otimes_{\mathbb{Z}} \mathbb{R}$, and let $\nu_{\ast}\in \mathfrak{a}^*_{\ast,\mathbb{C}}=\nu_{\ast}\in \mathfrak{a}^*_{\ast}\otimes_{\mathbb{R}} \mathbb{C}$. Let us denote by $ I_M(\nu_{\ast},\sigma_{\ast})$ the representation obtained by normalized induction $$\text{Ind}_{P_{\ast}\cap M}^M(\sigma_{\ast} \otimes \nu_{\ast}\delta_{P_{\ast}\cap M}^{-1/2}).$$ For $\nu\in \mathfrak{a}^*_{\mathbb{C}} $, we set $\tilde{\nu} \in \mathfrak{a}^*_{\ast,\mathbb{C}} $ to satisfy $$\left\langle \tilde{\nu}, H_{M_\ast}(m_{\ast}) \right\rangle= \left\langle \nu, H_{M}(m_{\ast}) \right\rangle,\,\forall \, m_{\ast}\in M_{\ast}. $$
Suppose there is an injection $T:(\sigma,W)\longrightarrow I_M(\nu_{\ast}, \sigma_{\ast})$, then $T$ 
induces an  injection $$T_{\ast}(\nu):I(\nu,\sigma)\longrightarrow I(\nu_{\ast}+\tilde{\nu},\sigma_{\ast}).$$ We have that $T_{\ast}(\nu)$ is given explicitly by the following $(T_{\ast}(\nu)f)(g)=T(f(g))(1), \,f\in V(\nu,\sigma),\,g\in G$. We also have that $T$ induces an injection $T^w:({^w\sigma},W)\longrightarrow I_{M'}({^w\nu_{\ast}},{^w\sigma_{\ast}}),$ where $I_{M'}({^w\nu_{\ast}},{^w\sigma_{\ast}})$ comes from normalized induction $$\text{Ind}_{P'_{\ast}\cap M'}^{M'}({^w\sigma_{\ast}} \otimes {^w\nu}_{\ast}\delta_{P'_{\ast}\cap M'}^{-1/2}).$$   
Explicitly $T^w$ is given by $T^w(v)(m')=T(v)(wm'w^{-1}),\, v\in W,\, m\in M'.$ Then similarly to $T$, we get that $T^w$ induces an injection $$T^w_{\ast}(\nu):I({^w\nu},{^w\sigma})\longrightarrow I({^w\nu_{\ast}}+{^w\tilde{\nu}},{^w\sigma_{\ast}}). $$ The map $T^w_{\ast}(\nu)$ is given by $(T^w_{\ast}(\nu)f)(g)=T^w(f(g))(1)=T(f(g))(1), \,f\in V({^w\nu},{^w\sigma}),\,g\in G.$ 

\begin{lem}\label{commutativity} Using the notation above we have that the following diagram$$\xymatrixcolsep{7pc}\xymatrix{
       I({\nu_{\ast}}+{\tilde{\nu}},{\sigma_{\ast}})\ar[r]^{A(\tilde{\nu}+\nu_{\ast},\sigma_{\ast},w)}  &I({^w\nu_{\ast}}+{^w\tilde{\nu}},{^w\sigma_{\ast}}) \\
        I({\nu},{\sigma})\ar[r]_{A(\nu,\sigma,w)}\ar[u]^{T_{\ast}(\nu)}       & I({^w\nu},{^w\sigma})\ar[u]^{T^w_{\ast}(\nu)} }$$ is commutative.
\end{lem}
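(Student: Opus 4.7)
My plan is to apply both paths of the diagram to a fixed $f\in V(\nu,\sigma)$, evaluate at $g\in G$, and compare the resulting $W_\ast$-valued expressions directly. I would first restrict to $\nu$ in the positive cone on which the defining integrals of $A(\nu,\sigma,w)$ and $A(\tilde{\nu}+\nu_\ast,\sigma_\ast,w)$ both converge absolutely; the general statement then follows by meromorphic continuation in $\nu$, since both compositions depend meromorphically on $\nu$ in the sense recalled at the end of Subsection~\ref{Intertwining operators}.

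Unwinding the down--right--up path, the explicit formula for $T^w_\ast(\nu)$ combined with the definition of the intertwining operator gives
$$\bigl(T^w_\ast(\nu)\circ A(\nu,\sigma,w)f\bigr)(g)=T\!\left(\int_{w\overline{N}w^{-1}\cap U}f(w^{-1}ng)\,dn\right)\!(1).$$
Since $v\mapsto T(v)(1)$ is a linear map $W\to W_\ast$, I would pull it inside the absolutely convergent integral. Unwinding the other composition similarly, using $(T_\ast(\nu)f)(g)=T(f(g))(1)$ and the definition of $A(\tilde{\nu}+\nu_\ast,\sigma_\ast,w)$, yields
$$\bigl(A(\tilde{\nu}+\nu_\ast,\sigma_\ast,w)\circ T_\ast(\nu)f\bigr)(g)=\int_{w\overline{N}_\ast w^{-1}\cap U}T\!\left(f(w^{-1}ng)\right)\!(1)\,dn.$$
With the integrands now literally identical, the whole problem reduces to showing that the two integration domains coincide.

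The main obstacle, though a mild one, is this final identification of domains. Since $\overline{N}\subset\overline{N}_\ast$ one inclusion is automatic, and I would establish the reverse inclusion by a root-theoretic argument: the root directions in $\overline{\mathfrak{n}}_\ast$ absent from $\overline{\mathfrak{n}}$ are exactly those in $\Phi^-(\theta)\setminus\Phi^-(\theta_\ast)$. Because $\tilde{w}$ carries the base $\theta$ bijectively onto $\theta'\subset\Delta$, its action restricts to an isomorphism of based root systems $\Phi(\theta)\to\Phi(\theta')$ which in particular preserves positivity, so these extra roots land in $\Phi^-(\theta')\subset\Phi^-$. Their conjugates by $w$ therefore lie in $\overline{U}$ and contribute nothing to the intersection with $U$, giving $w\overline{N}_\ast w^{-1}\cap U=w\overline{N}w^{-1}\cap U$. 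With equal domains and equal integrands, the diagram commutes on the cone of convergence, and meromorphic continuation finishes the proof.
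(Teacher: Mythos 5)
Your proposal follows essentially the same route as the paper: unwind both compositions of operators, move $v\mapsto T(v)(1)$ inside the absolutely convergent integrals, and reduce everything to the set equality $w\overline{N}_\ast w^{-1}\cap U=w\overline{N}w^{-1}\cap U$, which both you and the paper extract from the hypothesis $\tilde{w}(\theta)\subset\Delta$. The only difference is that your final step (``contribute nothing to the intersection'') is stated at the root level and glosses over the group-level cancellation, whereas the paper makes this precise by factoring $w\overline{N}w^{-1}$ into its $\overline{U}$- and $U$-parts via Borel, Prop.~21.9 and checking that the $\overline{U}$-part forces the extra factor to be trivial; to make your version fully rigorous you would invoke the same structure result for unipotent groups directly spanned by root subgroups.
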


\begin{proof} Let $f\in V(\nu,\sigma)$. We need to show that $$A(\tilde{\nu}+\nu_{\ast},\sigma_{\ast},w)(T_{\ast}(\nu)(f))(g)=T^w_{\ast}(\nu)(A(\nu,\sigma,w)f)(g),\,\forall g\in G.$$
Suppose for a moment that $w\overline{N_{\ast}}w^{-1}\cap U=w\overline{N}w^{-1}\cap U$. We then have
\begin{align*}&A(\tilde{\nu}+\nu_{\ast},\sigma_{\ast},w)(T_{\ast}(\nu)(f))(g)=\int\limits_{w\overline{N_{\ast}}w^{-1}\cap U}(T_{\ast}(\nu)(f))(w^{-1}ng)dn=\int\limits_{w\overline{N_{\ast}}w^{-1}\cap U}T(f(w^{-1}ng))(1)dn\\
&=T\left(\,\,\int\limits_{w\overline{N}w^{-1}\cap U}f(w^{-1}ng) dn\right)(1)=T((A(\nu,\sigma,w)f)(g))(1)=T_{\ast}^w(\nu)(A(\nu,\sigma,w)f)(g)
\end{align*}
It remains to show that $w\overline{N_{\ast}}w^{-1}\cap U=w\overline{N}w^{-1}\cap U$. The containment  $\overline{N_{\ast}}\supset\overline{N}$, implies the containment $w\overline{N_{\ast}}w^{-1}\cap U\supset w\overline{N}w^{-1}\cap U$. For the reverse containment we have $\overline{N_{\ast}}=(M\cap\overline{N_{\ast}}) \overline{N}$. Then   $$ w\overline{N_{\ast}}w^{-1}\cap U=\left(w(M\cap \overline{N_{\ast}})w^{-1}\cdot(w\overline{N}w^{-1})\right)\cap U.$$ Let $x \in w(M\cap \overline{N_{\ast}})w^{-1},\,y\in w\overline{N}w^{-1},$ be such that $xy\in U$. The condition $\tilde{w}(\theta)\subset \Delta$ implies $x\in \overline{U}$. We deduce out of Proposition 21.9 in \cite{BorelBook} that we can write $y=y_1y_2$, $y_1\in w\overline{N}w^{-1}\cap \overline{U}$ and $y_2\in w\overline{N}w^{-1}\cap U$. Since $xy_1y_2\in U$ we get that the product $xy_1$ is in $U\cap \overline{U}$ and therefore equal to the identity. We get $x=y_1^{-1},$ which implies $x$ and $y_1$ are the identity element. Therefore $xy=y_2\in w\overline{N}w^{-1}\cap U$, hence the reverse containment and the proof of the lemma.

\end{proof}

\begin{remark} A very similar statement to the one of the Lemma \ref{commutativity} can be found on \cite[Pg. 87]{ShahidiBook} and on \cite[Pg. 329]{Shahidi81}. We decided to include our own poof and version of Lemma \ref{commutativity} instead of citing it, because we believe it provides clarity for the proof of Theorem \ref{scalar}. 
\end{remark}

\subsubsection{Duality between $V(\nu,\sigma)$ and $V(-\overline{\nu},\sigma)$.}\label{duality}
In the case that $(\sigma,W)$ is unitary it is possible to define a duality between $I(\nu,\sigma)$ and $I(-\overline{\nu},\sigma)$. Indeed, take $(\,,)$ to be the Hermitian form in $W$. Let $K$ be a maximal compact subgroup with the property that $G=PK$. Then for $f_1\in V(\nu,\sigma) $ and $f_2\in V(-\overline{\nu},\sigma)$, we define $$\langle f_1\,,f_2 \rangle =\int\limits_K(f_1(k),\,f_2(k))dk .$$
Where we integrate with respect to the Haar measure on $K$. We see that $\langle\,, \rangle$ is equivalent to the duality defined by Shahidi in \cite[5.2]{ShahidiBook}. 

\subsection{Local Coefficients. }\label{local coefficients}

We continue with the same notation as in the subsection \ref{Intertwining operators}. As we mentioned before, local coefficients have only been defined in the case where $G$ is quasi-split. We thus, in this section, restrict to the case where $G$ is quasi-split. Let $\widetilde{w}_{\ell}\in {_GW}$ (resp. $\widetilde{w}^M_{\ell} \in {_MW}$) be the longest element in $_GW$ (resp. $_MW$, the relative Weyl group of $M$). Let $\widetilde{w}_{0}=\widetilde{w}_{\ell}\widetilde{w}^M_{\ell}$ and let $w_{0}$ be a representative in $G$ of $\widetilde{w}_0$. Let $\chi$ be a non--degenerate character of $U$, and let $\widetilde{w}\in {_GW}$ be such that $\widetilde{w}(\theta)\subset \Delta$. We say that $w$ and $\chi$ are compatible if $\chi(wuw^{-1})=\chi(u)$, for all $u\in M\cap U$. We say that $\sigma$ is $\chi${\it-generic} if $\Hom_{M\cap U}\left(\sigma, \chi\right)$ is not zero, in which case it is 1-dimensional. We call the elements of $\Hom_{M\cap U}\left(\sigma, \chi\right)$ {\it Whittaker functionals}. The fact that the space of Whittaker functionals is one dimensional is what gives rise to the local coefficients. Indeed, suppose that $w_0$ and $\chi$ are compatible. Given $\lambda$ a Whittaker functional we can construct a canonical functional $\lambda_{\chi}(\nu,\sigma)\in \Hom_{U}(I(\nu,\sigma),\chi)$ by the formula 

					$$\int\limits_{w_0\overline{N}w_0^{-1}}\lambda(f(w_0^{-1} n))\overline{\chi(n)}dn.$$
Suppose that $\chi$ and $w$ are compatible. Let $w_{\ell}^{M'}=ww_{\ell}^Mw^{-1}$, then $w_{\ell}^{M'}$ is a representative of the longest element in $_{M'}W$. We let $w_0'=w_{\ell}w_{\ell}^{M'}$ and suppose that $w_0'$ and $\chi$ are compatible. We then have a canonical Whittaker functional $\lambda_{\chi}({^w\nu},{^w\sigma})\in \Hom_{U}(I(^w\nu,{^w\sigma}), \chi)$, given by the formula 

$$\int\limits_{w_0'\overline{N'}{w_0'}^{-1}}\lambda(f({w_0'}^{-1} n'))\overline{\chi(n')}dn'.$$
We have that $\lambda_{\chi}(^w\nu,{^w\sigma})A(\nu,\sigma,w)\in \Hom_{U}(I(\sigma, \nu),\chi).$ Since any two Whittaker functionals are proportional we have that there exists $C_{\chi}(\nu,\sigma,w)\in \mathbb{C}\cup \{\infty\}$  such that 
$$\lambda_{\chi}({\nu},{\sigma})=C_{\chi}(\nu,\sigma,w) \lambda_{\chi}(^w\nu,{^w\sigma})A(\nu,\sigma,w) $$
The function $C_{\chi}(\nu,\sigma,w)$ is the local coefficient attached to $\nu,\sigma, \chi$ and $w$. The definition is due to Shahidi \cite{Shahidi81},\cite[5.1]{ShahidiBook}.

\subsection{Plancherel measures.}
We follow closely Shahidi's book \cite[5.3]{ShahidiBook} for this subsection. Let $\sigma$ be an irreducible unitary $\chi$-generic representation of $M$. Let $\nu \in \mathfrak{a}_{\mathbb{C}}^*$ and consider $$A(\nu,\sigma,w):I(\nu,\sigma)\longrightarrow I({^w\nu},{^w\sigma})$$
as well as $$A({^w\nu},{^w\sigma},w^{-1}):I({^w\nu},{^w\sigma})\longrightarrow I(\nu,\sigma).$$
Assume that $\nu$ is so that $I(\nu,\sigma)$ is irreducible. Then by Schur's Lemma, 
$$A:A({^w\nu},{^w\sigma},w^{-1})A(\nu,\sigma,w):I(\nu,\sigma)\longrightarrow I(\nu,\sigma)$$
is a scalar operator.
Let $$\gamma_w(G/P)=\int\limits_{\widetilde{N}_{w}}\delta_P(\tilde{n})^{-1}d\tilde{n}$$
where $$ \widetilde{N}_{w}=\overline{N}\cap w^{-1}Uw$$
We also define in an analogous manner $\gamma_{w^{-1}}(G/P')$. We then define a complex number $\mu(\nu,\sigma,w)$ to satisfy $$A({^w\nu},{^w\sigma},w^{-1})A(\nu,\sigma,w)=\mu(\nu,\sigma,w)\gamma_w(G/P)\gamma_{w^{-1}}(G/P')$$
In analogy to the tempered case and when $w=w_0$ this is what Shahidi calls the \emph{Plancherel measure} attached to $\nu,\sigma$, and $w$. We get from Corollary 5.3.1 in \cite{ShahidiBook} the following result. 

\begin{prop} \label{Plancherel measures} Let $(\sigma,W)$ be $\chi$-generic. Suppose that $\chi$ is compatible with $w$,$w_0$ and $w_0'$. One then has 
$$\mu(\nu,\sigma,w)\gamma_w(G/P)\gamma_{w^{-1}}(G/P')=C_{\chi}(\nu,\sigma,w)^{-1}C_{\chi}({^w\nu},{^w\sigma},w^{-1})^{-1}$$
\end{prop}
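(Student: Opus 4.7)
The plan is to derive the identity by substituting the defining relation for the local coefficients twice — once for $w$ and once for $w^{-1}$ — into each other, and then invoking the scalar relation that defines $\mu(\nu,\sigma,w)$. This is essentially the strategy used by Shahidi to prove the cited Corollary 5.3.1 in \cite{ShahidiBook}.

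First I would write down the definition of the local coefficient from Subsection \ref{local coefficients},
$$\lambda_\chi(\nu,\sigma)=C_\chi(\nu,\sigma,w)\,\lambda_\chi({^w\nu},{^w\sigma})A(\nu,\sigma,w),$$
and then the symmetric statement obtained by interchanging the roles of $(\nu,\sigma)$ and $({^w\nu},{^w\sigma})$ (and using $w^{-1}$ in place of $w$),
$$\lambda_\chi({^w\nu},{^w\sigma})=C_\chi({^w\nu},{^w\sigma},w^{-1})\,\lambda_\chi(\nu,\sigma)A({^w\nu},{^w\sigma},w^{-1}).$$
The hypothesis that $\chi$ is compatible with $w$, $w_0$, and $w_0'$ is exactly what is needed to make sense of these equations: compatibility with $w$ guarantees that ${^w\sigma}$ remains $\chi$-generic, while compatibility with $w_0$ and $w_0'$ guarantees that the canonical functionals $\lambda_\chi(\nu,\sigma)$ and $\lambda_\chi({^w\nu},{^w\sigma})$ are both well-defined and non-zero Whittaker functionals on the corresponding induced representations.

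Substituting the second relation into the first produces
$$\lambda_\chi(\nu,\sigma)=C_\chi(\nu,\sigma,w)\,C_\chi({^w\nu},{^w\sigma},w^{-1})\,\lambda_\chi(\nu,\sigma)A({^w\nu},{^w\sigma},w^{-1})A(\nu,\sigma,w).$$
By the discussion immediately preceding the proposition, the composition $A({^w\nu},{^w\sigma},w^{-1})A(\nu,\sigma,w)$ acts on $I(\nu,\sigma)$ as the scalar $\mu(\nu,\sigma,w)\gamma_w(G/P)\gamma_{w^{-1}}(G/P')$, at least on the Zariski open set of $\nu$ for which $I(\nu,\sigma)$ is irreducible (where Schur's lemma applies). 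Since $\lambda_\chi(\nu,\sigma)$ is a non-zero Whittaker functional, we may cancel it from both sides, rearrange, and obtain the desired equality. Both sides are meromorphic in $\nu$, so the identity, established on the open dense set of irreducibility, extends by analytic continuation to all $\nu$ where the expressions are defined.

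There is no substantial obstacle here: the argument is a one-step manipulation once the definitions are unwound. The only thing to verify with care is the bookkeeping required to confirm that every compatibility condition appearing in the statement is precisely what is needed to guarantee the non-vanishing of $\lambda_\chi(\nu,\sigma)$ and $\lambda_\chi({^w\nu},{^w\sigma})$, so that the cancellation in the final step is legitimate.
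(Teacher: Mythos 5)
Your argument is correct and is exactly the standard proof of Shahidi's Corollary 5.3.1, which is what the paper cites here rather than proving independently. The two-fold substitution of the defining identity for the local coefficient, followed by cancellation of the non-zero Whittaker functional against the scalar operator defining $\mu(\nu,\sigma,w)$, is precisely the intended route.
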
    

\subsection{Factorization of intertwining operators.}
The results of this section are taken from \cite[4.2]{ShahidiBook}. Let $\theta,\theta' \subset \Delta .$ Let $$W(\theta,\theta')=\lbrace \tilde{w}\in {_GW}|\,\tilde{w}(\theta)=\theta'\rbrace .$$ We say that $\theta$ and $\theta'$ are \emph{associate} if $W(\theta,\theta')$ is not empty. Let $\alpha \in \Delta -\theta $, let $\Omega= \theta \cup \lbrace \alpha \rbrace $. Let $M_{\Omega}$ and $M_{\theta}$ be the Levi subgroups corresponding to $\Omega$ and $\theta$, respectively. Define $\overline{\theta}=\tilde{w}_{\ell,\Omega}\tilde{w}_{\ell,\theta}(\theta) \subset \Omega$, where $\tilde{w}_{\ell,\Omega}$ and $\tilde{w}_{\ell,\theta}$ are the longest elements of the Weyl groups of ${_{M_{\Omega}}W}$ and ${_{M_{\theta}}W}$, respectively. We call $\overline{\theta}$ the conjugate of $\theta$ in $\Omega$. The following theorem comes from putting together Lemma 4.2.1 and Theorem 4.2.2 in \cite{ShahidiBook}.

\begin{thm} \label{factorization of intertwining} Suppose that $\theta$ and $\theta'$ are associate. Take $\tilde{w} \in W(\theta,\theta').$ Then there exists a family of subsets $\theta_1,\theta_2, \ldots ,\theta_k \in \Delta $ such that 
\begin{itemize}
\item[a\text{)}]$\theta_1=\theta $ and $\theta_k=\theta'$;
\item[b\text{)}] fix $1 \leq i \leq k-1$; then there exists $\alpha_{i}\in \Delta -\theta_i$ such that $\theta_{i+1}$ is the conjugate of $\theta_i$ in $\Omega_i=\theta_i\cup \lbrace \alpha_{i}\rbrace $;
\item[c\text{)}] set $\tilde{w}_i=\tilde{w}_{\ell,\Omega_{i}}\tilde{w}_{\ell,\theta_{i}}$ in $W(\theta_i,\theta_{i+1})$ for $1\leq i < k,$ then $\tilde{w}=\tilde{w}_{k-1}\ldots \tilde{w}_1$;
\end{itemize}
Let $\nu\in \mathfrak{a}_{\theta,\mathbb{C}}^*$ be in the cone of absolute convergence of $A(\nu,\sigma,w)$. Then each $\nu_i\in \mathfrak{a}_{\theta_{i},\mathbb{C}}^*$ is in the cone of absolute convergence for $A(\nu_i,\sigma_i,w_i)$, where $\nu_1=\nu$, $\nu_i={^{w_{i-1}}\nu_{i-1}}$, $\sigma_1=\sigma$ and $\sigma_i={^{w_{i-1}}\sigma_{i-1}},$ for $2\leq i\leq k-1$. Moreover, $$A(\nu,\sigma,w)=A(\nu_{k-1},\sigma_{k-1},w_{k-1})\cdots A(\nu_1,\sigma_1,w_1).$$   

\end{thm}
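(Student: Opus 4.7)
The plan is to prove the theorem by induction on the length of $\tilde{w}$ in ${_GW}$, first establishing the combinatorial statements (a)--(c), then deducing the analytic factorization of the intertwining operator from them.

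For the combinatorial part, the base case $\tilde{w} = 1$ is trivial with $k = 1$. For the inductive step, since $\tilde{w}(\theta) = \theta' \subset \Delta$, no root of $\theta$ is inverted by $\tilde{w}$, so any simple root $\alpha \in \Delta$ with $\tilde{w}(\alpha) < 0$ must lie in $\Delta \setminus \theta$; such $\alpha$ exists whenever $\tilde{w} \neq 1$. Set $\Omega_1 = \theta \cup \{\alpha\}$, $\tilde{w}_1 = \tilde{w}_{\ell, \Omega_1} \tilde{w}_{\ell, \theta}$, and $\theta_2 = \tilde{w}_1(\theta) = \overline{\theta}$. A standard property of products of longest elements in parabolic subgroups of a Coxeter group shows that $\tilde{w}_1$ is the shortest element of $W_{\Omega_1}$ carrying $\theta$ to $\theta_2$ and that the factorization is length-additive, $\ell(\tilde{w}) = \ell(\tilde{w}\tilde{w}_1^{-1}) + \ell(\tilde{w}_1)$. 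Setting $\tilde{w}' = \tilde{w}\tilde{w}_1^{-1} \in W(\theta_2, \theta')$ gives an element strictly shorter than $\tilde{w}$, so the induction hypothesis applies, and prepending $\tilde{w}_1$ completes the factorization.

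For the analytic part, the length-additive decomposition $\tilde{w} = \tilde{w}_{k-1}\cdots\tilde{w}_1$, lifted to representatives in $G$, induces a direct product decomposition of the unipotent subgroup over which the integral defining $A(\nu, \sigma, w)$ is taken; this is because the set of positive roots sent to negatives by $\tilde{w}^{-1}$ partitions naturally according to the factors, and the exponential on each piece is a group isomorphism with the corresponding algebraic subgroup. With $\nu$ in the cone of absolute convergence for $A(\nu, \sigma, w)$, each intermediate parameter $\nu_i$ lands in the corresponding cone for $A(\nu_i, \sigma_i, w_i)$, and a repeated application of Fubini's theorem splits the defining integral into the iterated composition $A(\nu_{k-1}, \sigma_{k-1}, w_{k-1}) \cdots A(\nu_1, \sigma_1, w_1)$.

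The main obstacle is on the combinatorial side: the length-additivity $\ell(\tilde{w}) = \ell(\tilde{w}\tilde{w}_1^{-1}) + \ell(\tilde{w}_1)$ and the identification of $\tilde{w}_1$ as a minimal-length representative carrying $\theta$ to $\overline{\theta}$ require a careful analysis of which positive roots of $\Phi$ are inverted by each factor, and this is precisely the content of Lemma 4.2.1 of \cite{ShahidiBook}. Once that combinatorial input is in hand, the decomposition of the unipotent radical and the convergence of the intermediate integrals are routine.
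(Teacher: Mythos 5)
The paper does not prove this theorem at all: it is stated as a citation, explicitly assembled from Lemma~4.2.1 and Theorem~4.2.2 of \cite{ShahidiBook}, and the surrounding text says so. So there is no ``paper proof'' to compare against; what you have written is a reconstruction of Shahidi's argument, and in broad strokes it is the right reconstruction.

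That said, two places in your sketch carry more weight than you give them. First, the length-additivity $\ell(\tilde{w})=\ell(\tilde{w}\tilde{w}_1^{-1})+\ell(\tilde{w}_1)$ and the characterization of $\tilde{w}_1=\tilde{w}_{\ell,\Omega_1}\tilde{w}_{\ell,\theta}$ as the shortest element of $W_{\Omega_1}$ mapping $\theta$ to $\overline{\theta}$ are precisely what makes the induction close, and you correctly flag this as Shahidi's Lemma~4.2.1 rather than proving it. Second, your claim that $\nu$ in the cone of absolute convergence for $A(\nu,\sigma,w)$ forces each $\nu_i$ into the cone for $A(\nu_i,\sigma_i,w_i)$ is a genuine assertion, not a formality: the cone for $w$ is cut out by inequalities on $\langle \nu,\alpha^{\vee}\rangle$ over the roots inverted by $\tilde{w}^{-1}$, and one has to trace how these inequalities specialize to the subset of roots inverted by each $\tilde{w}_i^{-1}$ after conjugating $\nu$ along the chain. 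This bookkeeping is exactly what allows the Tonelli/Fubini splitting of the defining integral, and in \cite{ShahidiBook} it is carried out as part of Theorem~4.2.2, not left as routine. If you intend this as a complete proof rather than an outline pointing back to the reference, both of these points need to be argued in full; as a summary of the strategy, it is faithful to the source the paper actually cites.
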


\begin{cor}[(Multiplicativity of local coefficients)]\label{multiplicativity of local}One has $$C_{\chi}(\nu,\sigma,w)=\prod\limits_{i=1}^{n-1}C_{\chi}(\nu_i,\sigma_i,w_i).$$ 
\end{cor}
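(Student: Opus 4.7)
The plan is to combine Theorem~\ref{factorization of intertwining} with the defining equation of the local coefficients $C_\chi(\nu_i,\sigma_i,w_i)$ applied iteratively at each stage of the factorization. From Theorem~\ref{factorization of intertwining} we have
$$A(\nu,\sigma,w)=A(\nu_{k-1},\sigma_{k-1},w_{k-1})\cdots A(\nu_1,\sigma_1,w_1),$$
where, by construction, $(\nu_{i+1},\sigma_{i+1})=({}^{w_i}\nu_i,{}^{w_i}\sigma_i)$ and $\tilde{w}_i\in W(\theta_i,\theta_{i+1})$.

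The first step is to check that compatibility propagates: if $\chi$ is compatible with $w$, then it is compatible with each $w_i$. This should follow from the explicit description $\tilde{w}_i=\tilde{w}_{\ell,\Omega_i}\tilde{w}_{\ell,\theta_i}$; each $w_i$ moves $\theta_i$ to $\theta_{i+1}\subset\Delta$ via a product of longest elements of Levi Weyl groups, which preserves the restriction of $\chi$ to the appropriate intersection $M_{\theta_i}\cap U$. Once this is in hand, at each stage we have a well defined Whittaker functional $\lambda_\chi(\nu_i,\sigma_i)\in \Hom_{U}(I(\nu_i,\sigma_i),\chi)$ constructed by the integral over $w_{0,i}\overline{N_i}w_{0,i}^{-1}$ described in Subsection~\ref{local coefficients}, and the defining identity
$$\lambda_\chi(\nu_i,\sigma_i)=C_\chi(\nu_i,\sigma_i,w_i)\,\lambda_\chi({}^{w_i}\nu_i,{}^{w_i}\sigma_i)\circ A(\nu_i,\sigma_i,w_i).$$

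The second step is to chain these identities together. Starting from $\lambda_\chi(\nu,\sigma)=\lambda_\chi(\nu_1,\sigma_1)$ and iterating, one obtains after $k-1$ applications
$$\lambda_\chi(\nu,\sigma)=\Bigl(\prod_{i=1}^{k-1}C_\chi(\nu_i,\sigma_i,w_i)\Bigr)\,\lambda_\chi({}^{w}\nu,{}^{w}\sigma)\circ A(\nu_{k-1},\sigma_{k-1},w_{k-1})\cdots A(\nu_1,\sigma_1,w_1),$$
using that $({}^{w_{k-1}}\nu_{k-1},{}^{w_{k-1}}\sigma_{k-1})=({}^{w}\nu,{}^{w}\sigma)$ since $w=w_{k-1}\cdots w_1$. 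By the factorization theorem the composition of intertwining operators on the right is precisely $A(\nu,\sigma,w)$, so comparing with the definition of $C_\chi(\nu,\sigma,w)$ gives the claimed equality (as an identity of meromorphic functions in $\nu$, after noting both sides are meromorphic and agree on the cone of absolute convergence).

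The main technical point I expect is the verification that the Whittaker functional $\lambda_\chi({}^{w_i}\nu_i,{}^{w_i}\sigma_i)$ occurring as the target of the $i$-th step equals the functional $\lambda_\chi(\nu_{i+1},\sigma_{i+1})$ used as the source of the $(i+1)$-th step; this amounts to checking that the ``canonical'' Whittaker functionals are defined coherently with respect to the choice of representatives $w_{0,i}$, which in turn relies on the compatibility of each $w_i$ with $\chi$ alluded to above. Modulo that bookkeeping, the corollary is a direct consequence of the factorization of $A(\nu,\sigma,w)$ together with the fact that the space of Whittaker functionals on $I(\nu_i,\sigma_i)$ is one-dimensional.
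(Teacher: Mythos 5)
The paper does not include its own proof of this corollary; it is cited directly from Shahidi's book as part of the imported background material. Your argument is the standard one and is correct: chain the defining identity $\lambda_\chi(\nu_i,\sigma_i)=C_\chi(\nu_i,\sigma_i,w_i)\,\lambda_\chi({}^{w_i}\nu_i,{}^{w_i}\sigma_i)\circ A(\nu_i,\sigma_i,w_i)$ along the factorization of $A(\nu,\sigma,w)$ from Theorem~\ref{factorization of intertwining}, using $({}^{w_i}\nu_i,{}^{w_i}\sigma_i)=(\nu_{i+1},\sigma_{i+1})$ and the one-dimensionality of the space of Whittaker functionals. You also correctly take $k-1$ as the upper limit of the product, whereas the paper's statement writes $n-1$, which is a typo for the $k$ of Theorem~\ref{factorization of intertwining} (the symbol $n$ later acquires a different meaning as the index of the congruence subgroups $G_n$). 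For comparison, the proof the paper does give of the analogous Corollary~\ref{multiplicativity of geral.local} uses the dual strategy of chaining the canonical test vectors $f_{(\nu_i,\sigma_i,v)}$ rather than the Whittaker functionals, since in the generalized setting no Whittaker functional is available; your Whittaker-functional argument is the original Shahidi route, and the paper's is the vector-side analogue. Your caveat about propagating compatibility of $\chi$ with each $w_i$ (and with the associated $w_{0,i}$, $w_{0,i}'$ needed to define $\lambda_\chi(\nu_i,\sigma_i)$ at each stage) is a genuine technical point; as in Proposition~\ref{Plancherel measures}, these compatibilities are hypothesized rather than derived in the paper's treatment, and the corollary should be read as implicitly assuming them at every stage of the factorization.
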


\section{Generalized local coefficients }\label{Gen.local.coeff.}

We let $P=MN$ be a parabolic subgroup of $G$ defined over $F$, and we let $\theta\subset \Delta$ the subset corresponding to $P$. We let $\mathfrak{m},\mathfrak{n}$ and $\overline{\mathfrak{n}}$ denote the lie algebra of $M,N$ and $\overline{N}$, respectively. We then have that $\mathfrak{g}=\overline{\mathfrak{n}}\oplus\mathfrak{m}\oplus\mathfrak{n}$. Given an element $Z\in \mathfrak{g}$, we denote by $Z_{\mathfrak{m}}$ the image of $Z$ under the projection of $\mathfrak{g}$ onto $\mathfrak{m}$. We let $(Y,\varphi)$ be as before, in particular $Y=\sum\limits_{\alpha \in \Delta}Y_\alpha,\,Y_{\alpha}\neq 0$. We then get $Y_{\mathfrak{m}}=\sum\limits_{\alpha \in \theta}Y_\alpha$. It is then true that $Y_\mathfrak{m}$ is a relatively regular element in $M$. We have that $$\chi(\overline{u})=\psi(\tr(Y\log(\overline{u}))),\,\overline{u}\in \overline{U},$$ restricted to $M\cap \overline{U}$ is equal to $\chi^M$, where $\chi^M(\overline{u})=\psi(\text{tr}_\mathfrak{m}(Y_\mathfrak{m}\log(\overline{u}))),\text{ for }\overline{u}\in M\cap\overline{U}$. Indeed, this follows from the fact that the bilinear form $\tr$ restricted to $\mathfrak{m}\times \mathfrak{m}$ is equal to $\text{tr}_{\mathfrak{m}}$ \cite[Lemma 5]{McNinch}, and from the fact that $Y-Y_\mathfrak{m}\in m^{\perp}$ with respect to $\tr$.   
Note that for a co-character $\varphi \in X_{\ast}(A)$ and a relatively regular nilpotent element $Y$, such that $\text{Ad}(\varphi(c))Y=c^{2}Y$, for $c\in \overline{F}$ we get $\text{Ad}(\varphi(c))Y_{\mathfrak{m}}=c^{2}Y_\mathfrak{m}$, so it makes sense to write $(Y_{\mathfrak{m}},\varphi)$.

\begin{definition} Let $(\sigma,W)$ be an irreducible representation of $M$. We say that $(\sigma,W)$ is \emph{$(Y,\varphi)$--generic} if $(\sigma,W)$ is $(Y_\mathfrak{m},\varphi)$--generic.  
\end{definition}

In order to generalize local coefficients we will need two hypothesis to be satisfied, we call them $\mathbf{H}_1$ and $\mathbf{H}_2$. 

\begin{itemize}
\item[$\mathbf{H}_1.$]  We can find a lattice $\mathcal{L}$ of $\mathfrak{g}$ that satisfies properties 1)--2) of Lemma \ref{lattice} with respect to $B_Y$, such that for any standard parabolic subgroup $P=MN$ of $G$ we get that $\mathcal{L}_\mathfrak{m}=\mathfrak{m}\cap \mathcal{L}$ satisfies properties 1)--2) of Lemma \ref{lattice} with respect to $B_{Y_\mathfrak{m}}$. Moreover, we can choose $\mathcal{L}$ to also satisfy $^w\mathcal{L}=\mathcal{L}$, for some full set of representatives of $\widetilde{w}\in {_GW}$. \\ 

\item[$\mathbf{H}_2.$] Let $(\sigma,W)$ be a $(Y_\mathfrak{m},\varphi)$--generic representation of $M$. Then dim$_{\mathbb{C}}W_{Y_\mathfrak{m},\varphi}$=dim$_{\mathbb{C}}I(0,\sigma)_{Y,\varphi}$. In other words the dimension of the space of degenerate Whittaker forms is invariant under induction. 
\end{itemize}

Let $\mathcal{L}$ be a lattice satisfying $\mathbf{H}_{1}$. We get out of Theorem \ref{G_n} a sequences of subgroups $G_n=\exp(\varpi^n\mathcal{L})$ and characters on $\chi_n$ that depend on $Y$. Also out of Theorem \ref{G_n}, we have a sequence of subgroups $M_n=\exp(\varpi^n\mathcal{L}_\mathfrak{m})$ and characters $\chi_n^M$ depending on $Y_\mathfrak{m}$. We see that $G_n\cap M=M_n$ and $\chi_n$ restricted to $M_n$ is equal to $\chi_n^M$. We have that if $G$ is quasi-split the dimension of the space of Whittaker functionals does not change for the induced representation \cite[3.4.6]{ShahidiBook}, therefore $\mathbf{H}_{2}$ is always satisfied in this case. We assume throughout this section that $\mathbf{H}_1$ and $\mathbf{H}_2$ are satisfied. We abuse the notation and write $\chi_n$ for the character $G_n$ as well as for the restriction to $M_n$.  We are going to assume from now on that every time we choose a representative $w$ of an element of $\tilde{w}\in _GW$, is one that satisfies the condition $^w\mathcal{L}=\mathcal{L}$. 

\subsection{Construction of some special functions}
Let $(\sigma,W)$ a $(Y,\varphi)$--generic representation of $M$. Let $v$ be an element in $W$ such that $\sigma(m)v=\chi_n(m)v$ for $m\in M_n$. We define $f_{(\nu,\sigma,v)} \in I(\nu,\sigma)$ to be the function with support in $P\overline{N}_n$ given by $f(pj)=\sigma(p)\delta_P(p)^{-1/2}\nu(p)\chi_n(j)f(1)$, for $p\in P, j\in \overline{N}_n$; where $f(1)=v$, $\delta_P$ is the modular function for $P$ and $\nu$ is an unramified character of $M$ extended to be trivial on $N$. 

\begin{prop} \label{functions}The function $f_{(\nu,\sigma,v)}$ is in $ I(\nu,\sigma)^{G_n,\chi_n} $, that is $f_{(\nu,\sigma,v)}(gx)=\chi_{n}(x)f_{(\nu,\sigma)}(g)$, for all $g\in G$, $x\in G_n$.
\end{prop}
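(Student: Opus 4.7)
The plan is to first observe that $f := f_{(\nu,\sigma,v)}$ lies in $I(\nu,\sigma)$—this is immediate from the defining formula together with the left $P$-stability of the support $P\overline{N}_n$—and then to establish the right $(G_n,\chi_n)$-equivariance. The reduction step is to prove the special case $f(y)=\chi_n(y)v$ for every $y\in G_n$. Granted this, for $g=p\bar{n}\in P\overline{N}_n$ and $x\in G_n$ we write $gx=p(\bar{n}x)$ with $\bar{n}x\in \overline{N}_nG_n=G_n$; the induction transformation law $f(p\cdot)=\sigma(p)\delta_P(p)^{-1/2}\nu(p)f(\cdot)$ combined with the special case applied to $\bar{n}x$ gives $f(gx)=\sigma(p)\delta_P(p)^{-1/2}\nu(p)\chi_n(\bar{n})\chi_n(x)v=\chi_n(x)f(g)$. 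For $g\notin P\overline{N}_n$ we argue by contradiction: if $gx=p'\bar{n}'\in P\overline{N}_n$, then applying the Iwahori decomposition to $\bar{n}'x^{-1}\in G_n$ and absorbing its $\overline{N}_n$-factor exhibits $g$ itself in $P\overline{N}_n$; so both sides of the equality vanish.

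For the special case, use the Iwahori decomposition $G_n=N_nM_n\overline{N}_n$ provided by Theorem \ref{G_n}(2). Writing $y=nm\bar{n}$, the defining formula gives $f(y)=\sigma(m)\delta_P(m)^{-1/2}\nu(m)\chi_n(\bar{n})v$, since $\sigma$, $\delta_P$ and $\nu$ extend trivially across $N$. Three observations finish the calculation: first, $M_n$ is a compact open subgroup and therefore lies in $\ker H_M$, forcing $\nu(m)=\delta_P(m)=1$; second, hypothesis $\mathbf{H}_1$ yields $\chi_n|_{M_n}=\chi_n^M$, so the standing hypothesis on $v$ gives $\sigma(m)v=\chi_n(m)v$; third, $N\subset U$ and $\chi_n$ is trivial on $Q_n\supset U_n\supset N_n$, so $\chi_n(n)=1$. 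Multiplicativity of $\chi_n$ then delivers $f(y)=\chi_n(m)\chi_n(\bar{n})v=\chi_n(n)\chi_n(m)\chi_n(\bar{n})v=\chi_n(y)v$.

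The main obstacle is the identification $\chi_n|_{M_n}=\chi_n^M$. This is precisely what hypothesis $\mathbf{H}_1$ is designed to provide: the lattice $\mathcal{L}$ built from $Y$ must restrict to $\mathcal{L}_\mathfrak{m}=\mathcal{L}\cap\mathfrak{m}$ enjoying the analogous self-duality with respect to $B_{Y_\mathfrak{m}}$, so that Theorem \ref{G_n} applied to $M$ and $Y_\mathfrak{m}$ reproduces exactly the restriction to $M_n$ of the character on $G_n$. Combined with the explicit formula $\chi_n(qj)=\psi(\tr(\varpi^{-2n}Y\log j))$ on $Q_n\overline{U}_n$ and the fact that $Y-Y_\mathfrak{m}\in\mathfrak{m}^\perp$, this yields the required compatibility. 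Everything else is routine manipulation of the Iwahori decomposition and elementary properties of induced representations.
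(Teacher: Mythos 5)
Your proof is correct, and it takes a genuinely different, somewhat cleaner route than the paper's. The paper works with the factorization $G_n=\overline{N}_nP_n$, so it splits into the two cases $x\in\overline{N}_n$ (immediate from the definition of $f$) and $x\in P_n$; the latter case then needs the conjugation manipulation $x^{-1}jx=zj$ with $z=x^{-1}jxj^{-1}\in\ker\chi_n$, followed by a further decomposition $z=z_1z_2$ with $z_1\in P_n$, $z_2\in\overline{N}_n$. You instead use the Iwahori decomposition in the order $G_n=N_nM_n\overline{N}_n$, which puts every $y\in G_n$ directly in the form $pj$ with $p=nm\in P$ and $j=\bar n\in\overline{N}_n$; this lets you evaluate $f(y)$ from the defining formula at once and obtain $f(y)=\chi_n(y)v$ on all of $G_n$, not just on $P_n$. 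Combined with the (easy but worth stating explicitly, as you do) observation that $f\in I(\nu,\sigma)$ so the left $P$-transformation law holds, the general identity $f(gx)=\chi_n(x)f(g)$ drops out by writing $gx=p(\bar n x)$ and invoking the special case for $\bar n x\in G_n$. Your treatment of the support ($g\notin P\overline{N}_n$) is the same in substance as the paper's. The paper's commutator argument and your direct evaluation of $f$ on $G_n$ each rely on the same inputs (Iwahori decomposition, $\chi_n$ trivial on $Q_n$, $\mathbf{H}_1$ giving $\chi_n|_{M_n}=\chi_n^M$, triviality of $\nu$ and $\delta_P$ on the compact $M_n$), but yours avoids the intermediate conjugation step and is arguably easier to verify.
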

\begin{proof}

Let us denote $f_{(\nu,\sigma,v)}$ by $f$ for short. We want to show that $f(gx)=\chi_n(x)f(g)$, for $g\in G$, $x\in G_n$. We have that $G_n=\overline{N}_n P_n$, so it is enough to consider two cases, the case $x\in \overline{N}_n$, and the case $x\in P_n$. The case $x\in \overline{N}_n$ follows right out of the definition of $f$, so it is enough to consider the case  $x\in P_n$. 
We have that $Q\supset N$ and that $\chi_n$ is a character of $G_n$ trivial on $Q_n$, thus also trivial on $N_n$. If $x\in P_n=M_nN_n$, we can write $x=my$ where $m\in M_n$ and $y\in N_n$. Then \begin{align}\label{x in P}f(x)=f(my)=\sigma(m)v=\chi_n(m)v=\chi_n(x)v.\end{align} Let $g\in P\overline{N}_n$ and $x\in P_n=M_nN_n$. We have $g=pj$, for some $p\in P$ and some $j\in \overline{N}_n$. The element $x^{-1}j^{-1}xj^{-1}\in \ker \chi_n$, so there exists a $z\in \ker \chi_n$ such that $x^{-1}jx=zj$. We write $z=z_1z_2$ where $z_1\in P_n$ and $z_2\in \overline{N}_n$. We then get 
\begin{align*}
f(gx)=&f_{(\nu,\sigma)}(pjx)=f(pxx^{-1}jx)=f(pxzj)=f(pxz_1z_2j)\\
=&\sigma(p)\sigma(xz_1)\delta^{-1/2}_p(pxz_1)\nu(pxz_1)\chi_n(z_2j)v\\
=&\sigma(p)\delta^{-1/2}_P(p)\nu(p)\chi_n(z_2j)f(xz_1)\\
=&\sigma(p)\delta^{-1/2}_P(p)\nu(p)\chi_n(z_2j)\chi_n(xz_1)v \text{\, (by equation \ref{x in P})}\\
=&\sigma(p)\delta^{-1/2}_P(p)\nu(p)\chi_n(xz)\chi(j)v\\ 
=&\chi_n(xz)f(pj)=\chi(x)f(g)
\end{align*}
We get that $f(gx)=\chi_n(x)f(g)$ is not zero for $x\in G_n$, and $g \in P\overline{N}_n$. Therefore $P\overline{N}_nG_n=P\overline{N}_n$. We get that if $g\not\in P\overline{N}_n$ then $gx\not\in P\overline{N}_n$, which implies that $f(gx)=0=\chi_n(x)f(g)$. We conclude that $f(gx)=\chi_n(x)f(g)$, for all $g\in G$.
\end{proof}
Note that the functions $\text{Res}^P_Kf_{(\nu,\sigma,v)}$ are independent of $\nu$. Let $\mathcal{B}=\left\lbrace v_1,v_2\ldots ,v_k \right\rbrace$ be a basis for $W_n$. We then have that $$\mathcal{B}(\nu,\sigma)=\left\lbrace f_{(\nu,\sigma,v_1)},f_{(\nu,\sigma,v_2)}\ldots ,f_{(\nu,\sigma,v_k)} \right\rbrace$$ is linearly independent because their evaluation at the identity is linearly independent. We then conclude thanks to $\mathbf{H_2}$ that $\left\lbrace f_{(\nu,\sigma,v_1)},f_{(\nu,\sigma,v_2)}\ldots ,f_{(\nu,\sigma,v_k)} \right\rbrace$ is a basis for $ I(\nu,\sigma)^{G_n,\chi_n}$.

In order to define generalized local coefficients it is natural to assume some relation from $\chi$ and $w$ coming from the compatibility condition in the quasi-split case. The character $\chi$ only depends on $Y$ and is more convenient to define the compatibility condition in terms of $w$ and $Y$. 
\begin{definition} Let $\tilde{w}\in {_GW}$. We say that $w$ and $(Y,\varphi)$ are \emph{compatible} if $(Y^w)_\mathfrak{m}=Y_\mathfrak{m}$, for  $\,\overline{u}\in M\cap\overline{U}$.  
\end{definition}
Let us suppose then that $w$ and $(Y,\varphi)$ are compatible. Using $\mathbf{H}_2$ again, we have $w^{-1}G_nw=G_n$. We claim that compatibility of $w$ and $Y$ implies $^w\chi_n(x)=\chi_n(x),\,x\in M'_n$. Indeed, take $x\in M'_n$ and write $x=qj$, for $q\in Q\cap M'_n\,, j\in \overline{U}\cap M'_n$. Since $\tilde{w}(\theta)=\theta'\subset \Delta$, we get that $w^{-1}qw \in Q\cap M_n$ and $w^{-1}jw\in \overline{U}\cap M_n$.  \begin{align*}^w\chi_n(x)=&\chi_n(w^{-1}qjw)=\chi_n(w^{-1}jw)=\psi(\tr(\varpi^{-2n}Y\log(w^{-1}jw))\\
=&\psi(\text{tr}_{\mathfrak{m}}(\varpi^{-2n}(Y^w)_\mathfrak{m}\log(j)))=\psi(\text{tr}_{\mathfrak{m}}(\varpi^{-2n}(Y)_\mathfrak{m}\log(j)))\\
=&\psi(\tr(\varpi^{-2n}Y\log(j))=\chi_n(j)=\chi_n(x)\\
\end{align*} 
Given $v\in W_n=(\sigma,W)^{M_n,\chi_n}$, we get for $x\in M'_n$ that $$^w\sigma(x)v=\sigma(w^{-1}xw)v=\chi_n(w^{-1}xw)v=\chi_n(x)v$$ therefore $v\in (^w\sigma,W)^{M'_n,\chi_n}$. We get out of the compatibility condition that $\chi(u)=\chi(wuw^{-1}),$ for $u\in \overline{U}\cap M$ and thus the space spanned by $$\lbrace \sigma(u)v-\chi(u)v\rbrace,\, u\in \overline{U}\cap M,\,v\in W$$ is equal to the space spanned by $$\lbrace {^w\sigma}(u)v-{^w\chi}(u)v\rbrace,\, u\in \overline{U}\cap M',\,v\in W$$ and equal the one spanned by $$\lbrace {^w\sigma}(u)-{\chi}(u)\rbrace,\, u\in \overline{U}\cap M', \,v\in W.$$ Since taking the quotient of $W$ by any of these subspaces is not zero we conclude that $(^w\sigma,W)$ is $(Y,\varphi)$--generic. We then can construct the function $f_{(^w\nu,{^w\sigma},\,v)}$, using Proposition \ref{functions} and we get that $f_{(^w\nu,{^w\sigma},\,v)}$ is in $ I(^w\nu,{^w\sigma})^{G_n,\chi_n}$. The same reasoning shows that $$\mathcal{B}(^w\nu, {^w\sigma})=\left\lbrace f_{({^w\nu},{^w\sigma},\,v_1)},f_{({^w\nu},{^w\sigma},\,v_2)}\ldots ,f_{({^w\nu},{^w\sigma},\,v_k)} \right\rbrace$$ is a basis for $ I(^w\nu,{^w\sigma})^{G_n,\chi_n}$. The operator $A(\nu,\sigma,w)$ maps $I(\nu,{\sigma})^{G_n,\chi_n}$ isomorphically onto $I(^w\nu,{^w\sigma})^{G_n,\chi_n}$. We denote by $[A(\nu,\sigma,w]_{\mathcal{B}}$ the matrix representation of the operator $A(\nu,\sigma,w)$ when restricted to $I(\nu,\sigma)^{G_n,\chi_n}$, with respect to the bases $\mathcal{B}(\nu,\sigma)$ and $\mathcal{B}(^w\nu,{^w\sigma})$. We contend that $[A(\nu,\sigma,w]_{\mathcal{B}}$ is a scalar matrix, this scalar is what we are after, i.e. a generalized local coefficient. We first prove the case where $\sigma$ is unitary.

\begin{prop}\label{unitaryscalar} Let $(\sigma,W)$ be a unitary, irreducible, $(Y,\varphi)$-generic representation of $M$. Let $\mathcal{B}$ be any basis for $W_n$ and suppose that $w$ is compatible with $(Y,\varphi)$. Then for sufficiently large $n$, $[A(\nu,\sigma,w)]_{\mathcal{B}}$ is a scalar matrix.  
\end{prop}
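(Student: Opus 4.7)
The matrix $[A(\nu,\sigma,w)]_\mathcal{B}$ is, by construction, the matrix in $\mathcal{B}$ of the endomorphism $T_\nu \colon W_n \to W_n$ defined by $T_\nu(v) = A(\nu,\sigma,w) f_{(\nu,\sigma,v)}(1)$. Indeed, $A(\nu,\sigma,w) f_{(\nu,\sigma,v)}$ lies in $I({^w\nu},{^w\sigma})^{G_n,\chi_n}$ and satisfies the same left $P'$ and right $(G_n,\chi_n)$ equivariance as $f_{({^w\nu},{^w\sigma},T_\nu(v))}$; being supported on $P'\overline{N'}_n$, it must equal this latter function by the characterization in Proposition \ref{functions}. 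So the claim reduces to showing $T_\nu = c(\nu)\cdot\mathrm{id}_{W_n}$.

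First I would reduce to the case of $\nu$ purely imaginary. Since $[A(\nu,\sigma,w)]_\mathcal{B}$ is meromorphic in $\nu$ and the locus of scalar matrices is Zariski-closed in the ambient space of matrices, this reduction is legitimate; on this locus $\sigma \otimes \nu$ is unitary and the pairing $\langle f_1,f_2\rangle = \int_K (f_1(k),f_2(k))\,dk$ from Subsection \ref{duality} makes $I(\nu,\sigma)$ a pre-Hilbert representation of $G$. Because $f_{(\nu,\sigma,v)}|_K$ does not depend on $\nu$, this pairing restricts to a fixed positive-definite Hermitian form on $W_n$, and the canonical bijection $v\mapsto f_{(\nu,\sigma,v)}$ becomes an isometry up to a uniform constant.

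Next I would combine two input identities. From the Plancherel-style composition identity $A({^w\nu},{^w\sigma},w^{-1})\circ A(\nu,\sigma,w) = \mu(\nu,\sigma,w)\gamma_w(G/P)\gamma_{w^{-1}}(G/P')\cdot\mathrm{id}$, restriction to $I(\nu,\sigma)^{G_n,\chi_n}$ gives $T'_\nu\circ T_\nu = \mu\gamma_w\gamma_{w^{-1}}\cdot\mathrm{id}_{W_n}$, where $T'_\nu$ is the operator associated with $A({^w\nu},{^w\sigma},w^{-1})$. From the standard adjoint identity for intertwining operators under this pairing, $T_\nu^* = T'_\nu$ up to a positive real scalar (for $\nu$ imaginary, $-\overline{\nu}=\nu$). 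Combining, $T_\nu T_\nu^*$ is a positive scalar times the identity, so $T_\nu$ is a scalar multiple of a unitary operator on $W_n$.

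To promote \emph{unitary} to \emph{scalar} I would exploit the action of the stabilizer $H = \{m\in M : m G_n m^{-1}=G_n,\ \chi_n^m=\chi_n\}$. A direct computation along the lines of Proposition \ref{functions} yields $\pi(m) f_{(\nu,\sigma,v)} = \delta_P(m)^{-1/2}\nu(m) f_{(\nu,\sigma,\sigma(m)v)}$ for $m\in H$, and the analogous formula on the ${^w\sigma}$-side, so that under the canonical identifications the right-translation action of $H$ transports to the $\sigma$-action on $W_n$ up to an unramified character factor. The $G$-equivariance of $A(\nu,\sigma,w)$ combined with the compatibility of $w$ and $(Y,\varphi)$---which identifies the corresponding stabilizer on the ${^w\sigma}$-side---therefore forces $T_\nu$ to commute with the $H$-action on $W_n$ up to a ratio of characters. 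Irreducibility of $\sigma$, together with the non-degeneracy of $\chi_n^M$ inherited from the relative regularity of $Y_\mathfrak{m}$, ensures that the commutant of this $H$-action on $W_n$ is just the scalars, whence Schur's lemma gives $T_\nu=c(\nu)\cdot\mathrm{id}$.

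The main obstacle is the last step: verifying that the $H$-action on $W_n$ has trivial commutant. In the quasi-split generic setting $W_n$ is one-dimensional (the space of Whittaker functionals of $\sigma$) and this is automatic; in the general situation, e.g.\ for $\mathrm{GL}_m(D)$ with $D$ non-commutative, it is precisely here that the structure theory of degenerate Whittaker models of Moeglin--Waldspurger and Varma, together with the compatibility of $w$ with $(Y,\varphi)$, must be combined with the irreducibility of $\sigma$.
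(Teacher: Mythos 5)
Your proposal mirrors the paper's argument closely through the reduction to purely imaginary $\nu$, the orthogonality of the canonical functions $f_{(\nu,\sigma,v_i)}$ under the pairing of Subsection~\ref{duality}, the adjoint identity $A(\nu,\sigma,w)^{*}=A({^w(-\overline{\nu})},{^w\sigma},w^{-1})$ from \cite[Prop.~5.2.1]{ShahidiBook}, and the deduction that for $\nu\in i\mathfrak{a}^{*}$ the matrix $[A(\nu,\sigma,w)]_{\mathcal{B}}$ is a positive multiple of a unitary matrix, hence normal. Up to this point you and the paper take the same route.

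You diverge at the final promotion from ``positive scalar times unitary'' to ``scalar,'' and this is where your argument has a genuine gap. The paper concludes by observing that $[A(\nu,\sigma,w)]_{\mathcal{B}}$ and $[A({^w\nu},{^w\sigma},w^{-1})]_{\mathcal{B}}$ are commuting normal matrices whose product is a scalar matrix, and asserts that each must therefore be scalar. You instead propose to make $T_\nu$ commute with the action of a stabilizer group $H\subset M$ on $W_n$ and then invoke Schur's lemma, which would require that the $H$-action on $W_n$ has trivial commutant. As you yourself flag, that step is not established, and nothing in the stated hypotheses supplies it: the subgroup $M_n\subset H$ already acts on $W_n$ through the single character $\chi_n^{M}$, so its commutant is all of $\mathrm{End}(W_n)$, and there is no a priori reason the remaining part of $H$ cuts this down to scalars. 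As written your argument therefore only shows that $[A(\nu,\sigma,w)]_{\mathcal{B}}$ is a positive multiple of a unitary matrix. It is fair to note that the paper's own last step is itself terse---two commuting normal matrices whose product is scalar need not individually be scalar (e.g.\ $\mathrm{diag}(1,-1)$ with itself)---so you have correctly identified the delicate point of the proof; but your proposed substitute does not close it, and you should either justify the trivial-commutant claim or find another mechanism for the last implication.
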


\begin{proof}  Let us denote by $\left\langle\,,\right\rangle$ the duality between $V(\nu,\sigma)$ and $V(-\overline{\nu},\sigma)$ defined in \ref{duality}. Let $(\,,)$ be a non--degenerate Hermitian form on the unitary representation $(\sigma,W)$. Let $K$ be a maximal compact subgroup such that $G=KP=KP'$. Let us suppose that $\mathcal{B}=\left\lbrace v_1,v_2,\ldots ,v_k\right\rbrace$ is an orthonormal basis for $W_n$. We have for $1\leq i,j\leq k$, \begin{align}\left\langle f_{(\nu,\sigma,v_i)},f_{(-\overline{\nu},\sigma,v_j)}\right\rangle=&\int\limits_{K}(f_{(\nu,\sigma,v_i)}(x),f_{(-\overline{\nu},\sigma,v_j)}(x))dx\notag\\
=&\mu_K(P\overline{N}_n\cap K)\delta_{ij}=\begin{cases} \mu_K(P\overline{N}_n\cap K) &\mbox{if } i=j \\
0& \text{otherwise.} \end{cases}\label{selfdual}\end{align}

Let us denote as well by $\left\langle\,,\right\rangle$ the duality between $I(^w\nu,{^w\sigma})$ and $I(^w(-\overline{\nu}),{^w\sigma})$, it should cause no confusion. We similarly get that \begin{align}\left\langle f_{(^w\nu,{^w\sigma},v_i)},f_{(^w(-\overline{\nu}),{^w\sigma},v_j)}\right\rangle=&\int\limits_{K}( f_{(^w\nu,{^w\sigma},v_i)}(x),f_{(^w(-\overline{\nu}),{^w\sigma},v_j)}(x))dx \notag\\
=&\mu_K(P'\overline{N'}_n\cap K)\delta_{ij}=\begin{cases} \mu_K(P'\overline{N'}_n\cap K) &\mbox{if } i=j \\
0& \mbox{if } \text{otherwise.} \end{cases}\label{selfdual2}\end{align}
We denote by $A(\nu,\sigma,w)^*$ the operator from $I(^w\nu,{^w\sigma})$ into $I(\nu,\sigma)$ satisfying \begin{align}\left\langle A(\nu,\sigma,w)f_{1},f_{2}\right\rangle =\left\langle f_{1}, A(\nu,\sigma,w)^*f_{2}\right\rangle, \label{conjugate matrix}\end{align}
for $f_{1}\in I(^w\nu,{^w\sigma}),\, f_{2}\in I(^w(-\overline{\nu}),{^w\sigma})$. Let $[A(\nu,\sigma,w)]_{\mathcal{B}}=(a_{i,j}(\nu))$, and let $[A(\nu,\sigma,w)^*]_{\mathcal{B}}=(b_{i,j}(\nu))$, for $1\leq i,\,j\leq k$. Let $r_1=\mu_K(P\overline{N}_n\cap K)$, and $r_2=\mu_K(P'\overline{N'}_n\cap K)$. We then get from equations \ref{selfdual}, \ref{selfdual2} and \ref{conjugate matrix}, that $r_1a_{i,j}(\nu)=r_2\overline{b_{j,i}(\nu)}$. In other words the matrix $[A(\nu,\sigma,w)^*]_{\mathcal{B}}$ is the complex conjugate transpose of the matrix $[A(\nu,\sigma,w)]_{\mathcal{B}}$ times a positive real number $r=r_1/r_2$. We have from Proposition 5.2.1 in \cite{ShahidiBook}, that $$A(\nu,\sigma,w)^*=A(^w(-\overline{\nu}),{^w\sigma},w^{-1}).$$
Suppose that $\nu \in i\mathfrak{a}^*$, then $\nu=-\overline{\nu}$. We have that $A(\nu,\sigma,w)A(^w(\nu),{^w\sigma},w^{-1})$ is a scalar times the identity. We therefore get that $[A(\nu,\sigma,w)]_{\mathcal{B}}$ commutes with $[A(^w(\nu),{^w\sigma},w^{-1})]_{\mathcal{B}}=[A(\nu,\sigma,w)^*]_{\mathcal{B}}$, which implies that $[A(\nu,\sigma,w)]_{\mathcal{B}}$ commutes with its complex conjugate transpose. We therefore get in this case that $[A(\nu,\sigma,w)]_{\mathcal{B}}$ and $[A(^w(\nu),{^w\sigma},w^{-1})]_{\mathcal{B}}$ are normal matrices and therefore simultaneously diagonalizable, since their product is a scalar matrix, each must be a scalar matrix. We get that $[A(\nu,\sigma,w)]_{\mathcal{B}}$ is a scalar matrix for any $\nu$ by analytic continuation. This finishes the proof.  
\end{proof}

\begin{thm}\label{scalar} Let $(\sigma,W)$ be an irreducible, $(Y,\varphi)$-generic representation of $M$. Let be $\mathcal{B}$ any basis for $W_n$ and suppose that $w$ is compatible with $(Y,\varphi)$. Then $[A(\nu,\sigma,w)]_{\mathcal{B}}$ is a scalar matrix, for sufficiently large $n$.  
\end{thm}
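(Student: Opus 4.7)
The plan is to reduce the general case to the unitary case already handled in Proposition \ref{unitaryscalar}, via the Casselman/Langlands subrepresentation theorem combined with the commutative square of Lemma \ref{commutativity}.

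First, I would embed $(\sigma,W)$ into an induced representation of a unitary representation. By the subrepresentation theorem applied inside $M$, there exist a standard parabolic $P_{\ast} \cap M$ of $M$ with Levi $M_{\ast} \subseteq M$ (corresponding to some $\theta_{\ast} \subseteq \theta$), an irreducible tempered (hence unitary) representation $\sigma_{\ast}$ of $M_{\ast}$, an element $\nu_{\ast} \in \mathfrak{a}^{\ast}_{\ast,\mathbb{C}}$, and an injection $T : \sigma \hookrightarrow I_M(\nu_{\ast}, \sigma_{\ast})$. Through $T$ and $\mathbf{H}_2$ applied inside $M$, the nonvanishing of $W_{Y_{\mathfrak{m}},\varphi}$ forces $\sigma_{\ast}$ to be $(Y_{\mathfrak{m}_{\ast}},\varphi)$-generic. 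Furthermore, the compatibility of $w$ with $(Y_{\mathfrak{m}_{\ast}},\varphi)$ follows from the original compatibility $(Y^w)_{\mathfrak{m}} = Y_{\mathfrak{m}}$ together with the relative regularity of $Y$; if necessary, one may adjust the intermediate parabolic using Theorem \ref{factorization of intertwining} so that this compatibility holds at every stage.

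Next, I would invoke Lemma \ref{commutativity} to obtain the commutative diagram
$$\xymatrixcolsep{5pc}\xymatrix{
I(\nu_{\ast} + \tilde\nu, \sigma_{\ast}) \ar[r]^{A(\tilde\nu + \nu_{\ast}, \sigma_{\ast}, w)} & I({^w\nu_{\ast}} + {^w\tilde\nu}, {^w\sigma_{\ast}}) \\
I(\nu, \sigma) \ar[r]_{A(\nu, \sigma, w)} \ar[u]^{T_{\ast}(\nu)} & I({^w\nu}, {^w\sigma}) \ar[u]_{T^w_{\ast}(\nu)}
}$$
in which $T_{\ast}(\nu)$ and $T^w_{\ast}(\nu)$ are both injective. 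By Proposition \ref{unitaryscalar} applied to the unitary $\sigma_{\ast}$, the top arrow, restricted to the $(G_n,\chi_n)$-isotypic subspace, is scalar multiplication by some meromorphic function $c_{\ast}(\nu)$ relative to the canonical bases $\mathcal{B}^{\ast}(\nu_{\ast} + \tilde\nu, \sigma_{\ast})$ and its $w$-translate.

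Finally, I would carry out the diagram chase. Fix a basis $\{v_i\}$ of $W_n$ and write $A(\nu, \sigma, w) f_{(\nu, \sigma, v_i)} = \sum_j a_{ji}(\nu) f_{({^w\nu}, {^w\sigma}, v_j)}$; the goal is to show $a_{ji}(\nu) = c_{\ast}(\nu)\delta_{ji}$. Expanding $T_{\ast}(\nu) f_{(\nu, \sigma, v_i)}$ and $T^w_{\ast}(\nu) f_{({^w\nu}, {^w\sigma}, v_i)}$ in their respective upstairs bases yields expansion matrices $B$ and $B^w$; the commutative diagram together with the scalar action upstairs gives $B^w [a_{ji}] = c_{\ast}(\nu)\, B$. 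A direct computation using Proposition \ref{functions} and the explicit formulas $(T_{\ast}(\nu)f)(g)=T(f(g))(1)$ and $(T^w_{\ast}(\nu)f)(g)=T(f(g))(1)$ shows $B = B^w$: both $T_{\ast}(\nu) f_{(\nu, \sigma, v_i)}$ and $T^w_{\ast}(\nu) f_{({^w\nu}, {^w\sigma}, v_i)}$ take the value $T(v_i)(1) \in W_{\ast}$ at the identity, and functions in the $(G_n,\chi_n)$-isotypic subspace upstairs are determined by their value at $1$. Hence their expansion coefficients coincide, and cancelling the (column-injective) $B^w$ yields $[a_{ji}] = c_{\ast}(\nu)\, I$, as required.

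The main obstacle is the final coefficient-matching step together with the descent of $w$-compatibility from $(Y,\varphi)$ to $(Y_{\mathfrak{m}_{\ast}},\varphi)$. The former is a concrete calculation checking that the value at $1$ pins down elements of the upstairs isotypic subspace; the latter requires a careful choice of the Langlands data $(\nu_{\ast},\sigma_{\ast})$, and may be handled by reducing via Theorem \ref{factorization of intertwining} to elementary situations where the compatibility is manifest.
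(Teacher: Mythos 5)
Your proposal is essentially the paper's proof: embed $\sigma$ into $I_M(\nu_*,\sigma_*)$ with $\sigma_*$ unitary tempered (the paper does this by taking the contragredient of a Langlands quotient map), use exactness of the twisted Jacquet functor together with $\mathbf{H}_2$ to transfer genericity to $\sigma_*$, invoke Lemma~\ref{commutativity} and Proposition~\ref{unitaryscalar}, and then match canonical functions through $T_*(\nu)$ and $T_*^w(\nu)$ by comparing values at the identity (which, by $\mathbf{H}_2$, determine elements of the isotypic component). One remark: the worry you flag about descending $w$-compatibility from $\mathfrak{m}$ to $\mathfrak{m}_*$ is unfounded --- since $\mathfrak{m}_*\subset\mathfrak{m}$ and the projection onto $\mathfrak{m}_*$ factors through the projection onto $\mathfrak{m}$, the identity $(Y^w)_{\mathfrak{m}}=Y_{\mathfrak{m}}$ immediately gives $(Y^w)_{\mathfrak{m}_*}=Y_{\mathfrak{m}_*}$, and no appeal to Theorem~\ref{factorization of intertwining} is needed.
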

\begin{proof}
 We have that $\sigma$ is irreducible and thus is admissible. Then the contragredient representation $\check{\sigma}$ is irreducible as well, otherwise $\check{\check{\sigma}}\cong \sigma$ would be reducible and this is not the case. We get by the Langlands' quotient theorem (Theorem 4.1 (3) in \cite{Silberger}) that there exists a standard parabolic subgroup $P_{\ast}=M_{\ast}N_{\ast} \subset P$, an irreducible tempered representation $(\sigma_{\ast}, W_{\ast})$ of $M_{\ast}$, and a $\nu_{\ast}\in \mathfrak{a}^*_{\ast,\mathbb{C}}=X^*(M_{\ast})\otimes \mathbb{C}$ such that $$ I_{M}(\nu_{\ast},\sigma_{\ast})\longrightarrow \check{\sigma} \longrightarrow 0,$$
is exact. Where $I_{M}(\nu_{\ast},\sigma_{\ast})$ is the representation of $M$ obtained by normalized parabolic induction from $P_{\ast}\cap M$ to $M$. Taking contragerdients of the exact sequence above and using the fact that the contragredient of $I_{M}(\nu_{\ast},\sigma_{\ast})$ is $I_{M}(-\nu_{\ast},\check{\sigma}_{\ast})$ we get the exact sequence $$0\longrightarrow  \sigma \longrightarrow I_{M}(-\nu_{\ast},\check{\sigma}_{\ast}).$$
Since $\sigma_{\ast}$ is unitary we get that $\check{\sigma}_{\ast}$ is unitary as well. 

To avoid working with $\check{\sigma}_{\ast}$ and $-\nu_{\ast}$, we redefine $(\sigma_{\ast}, W_{\ast})$ and $\nu_{\ast}$ in a more convenient way. We let $(\sigma_{\ast}, W_{\ast})$ be a unitary representation of $M_{\ast}$ such that there exists an injective morphism $$T: \sigma \longrightarrow I_{M}(\nu_{\ast},\sigma_{\ast}),$$ for some $\nu_{\ast}\in \mathfrak{a}^*_{\ast,\mathbb{C}}=X(M_{\ast})\otimes \mathbb{C}$. We have shown above that such a $(\sigma_{\ast}, W_{\ast})$ and $\nu_{\ast}$ do exist. We denote by $V_M(\nu_{\ast},\sigma_\ast)$ the space of functions where $I_{M}(\nu_{\ast},\sigma_{\ast})$ acts by right translation.  

Given $\nu\in \mathfrak{a}^*_{\mathbb{C}} ,$ we let $\tilde{\nu} \in \mathfrak{a}^*_{\ast,\mathbb{C}} $ to satisfy $$\left\langle \tilde{\nu}, H_{M_\ast}(m_{\ast}) \right\rangle= \left\langle \nu, H_{M}(m_{\ast}) \right\rangle,\,\forall \, m_{\ast}\in M_{\ast}. $$ Using the same notation than Lemma \ref{commutativity}, we have $$A(\tilde{\nu}+\nu_{\ast},\sigma_{\ast},w)(T_{\ast}(\nu)(f))=T^w_{\ast}(\nu)(A(\nu,\sigma,w)f),\,\forall f\in V(\nu,\sigma).$$
Since $(\sigma,w)$ is $(Y,\varphi)$--generic, we get that $W_{Y_{\mathfrak{m}},\overline{U}\cap M}\neq 0$, and by the exactness of the twisted Jaquet functor we must have $V_{M}(\nu_{\ast},\sigma_{\ast})_{Y_{\mathfrak{m}},\overline{U}\cap M}\neq 0$. We therefore have by hypothesis $\mathbf{H}_2$ that $(\sigma_{\ast},W_\ast)$ is a $(Y,\varphi)$--generic. Let $(W_\ast)_n$ be the $\chi_n$ isotypic component of $(\sigma_{\ast},W_\ast)$ when restricted to $(M_\ast)_n$, and let $\mathcal{B}_{\ast}$ be a basis for $(W_\ast)_n$. We then get by Proposition \ref{unitaryscalar}, that for sufficiently large $n$, $$[A(\tilde{\nu}+\nu_{\ast},\sigma_{\ast},w)]_{\mathcal{B}_{\ast}}$$ is a scalar matrix. We then have that there exists a $D(\nu)^{-1}\in \mathbb{C}\cup \{\infty\}$, such that for all $v_{\ast}\in (W_{\ast})_n$ $$A(\tilde{\nu}+\nu_{\ast},\sigma_{\ast},w)f_{(\tilde{\nu}+\nu_{\ast},\sigma_{\ast},v_{\ast})}=D(\nu)^{-1}f_{({^w\tilde{\nu}}+{^w\nu_{\ast}},{^w\sigma_{\ast}},v_{\ast})}$$ 
Let $\mathcal{B}$ be a basis for $W_n$, and let $v\in \mathcal{B}$. We have that $T_{\ast}(\nu)(f_{(\nu,\sigma,v)})$ is a non-zero vector in the $\chi_n$ isotypic component of $I(\nu_{\ast}+\tilde{\nu},\sigma_\ast)$ when restricted to $G_n$. Therefore we have that the support of $T_{\ast}(\nu)(f_{(\nu,\sigma,v)})$ is equal to $P_\ast (\overline{N_{\ast}})_n$. We have that 
$$T_{\ast}(\nu)(f_{(\nu,\sigma,v)})(1)=T(f_{(\nu,\sigma,v)}(1))(1)=T(v)(1)$$
We conclude that  $$T_{\ast}(\nu)(f_{(\nu,\sigma,v)})=f_{(\nu_{\ast}+\tilde{\nu},\,\sigma_{\ast},\,T(v)(1))}$$
Similarly we have that $$T^w_{\ast}(\nu)(f_{({^w\nu},{^w\sigma},v)})=f_{({^w\nu_{\ast}}+{^w\tilde{\nu}},\,{^w\sigma}_{\ast},\,T^w(v)(1))}$$ 
We note that $T(v)(1)=T^w(v)(1)$, putting it all together we get 
\begin{align*} T^w_{\ast}(\nu)(A(\nu,\sigma,w)f_{({\nu},{\sigma},v)})=&A(\tilde{\nu}+\nu_{\ast},\sigma_{\ast},w)T_{\ast}(\nu)(f_{({\nu},{\sigma},v)})=A(\tilde{\nu}+\nu_{\ast},\sigma_{\ast},w)f_{(\nu_{\ast}+\tilde{\nu},\,\sigma_{\ast},\,T(v)(1))}\\
=&D(\nu)^{-1}f_{({^w\tilde{\nu}}+{^w\nu_{\ast}},{^w\sigma_{\ast}},T(v)(1))}=D(\nu)^{-1}T^w_{\ast}(\nu)(f_{({^w\nu},{^w\sigma},v)})
\end{align*}

Using the fact that the linear map $T^w_{\ast}(\nu)$ is injective, we get $$A(\nu,\sigma,w)f_{({\nu},{\sigma},v)}=D(\nu)^{-1}f_{({^w\nu},{^w\sigma},v)}.$$ 
Therefore $[A(\nu,\sigma,w)]_{\mathcal{B}}$ is a scalar matrix. This finishes the proof of the theorem. 
\end{proof}

\begin{definition} Let $(\sigma,W)$ be an irreducible, $(Y,\varphi)$-generic representation of $M$. Let $\mathcal{B}$ be any basis for $W_n$ and suppose that $w$ is compatible with $(Y,\varphi)$. We know by Theorem \ref{scalar} that $[A(\nu,\sigma,w)]_{\mathcal{B}}$ is a scalar matrix, for sufficiently large $n$. We denote by $D_{(Y,\varphi,n)}(\nu,\sigma,w)$ the meromorphic function that satisfies $D_{(Y,\varphi,n)}(\nu,\sigma,w)[A(\nu,\sigma,w)]_{\mathcal{B}}$ is the identity matrix. We call $D_{(Y,\varphi,n)}(\nu,\sigma,w)$ a \emph{generalized local coefficient}. 
\end{definition}

We have that generalized local coefficients are related to the Plancherel measure in the way we expect from Proposition \ref{Plancherel measures}.
\begin{cor} \label{gral. Plancherel measure} We have for $n$ sufficiently large $$\mu(\nu,\sigma,w)\gamma_w(G/P)\gamma_{w^{-1}}(G/P')=D_{(Y,\varphi,n)}(\nu,\sigma,w)^{-1}D_{(Y,\varphi,n)}({^w\nu},{^w\sigma},w^{-1})^{-1}$$ 
\end{cor}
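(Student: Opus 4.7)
The plan is to mimic, in the present setting, exactly the argument behind Proposition \ref{Plancherel measures}: apply the defining property of the generalized local coefficient twice (once with $w$, once with $w^{-1}$) so as to compute the action of the product $A({^w\nu},{^w\sigma},w^{-1})A(\nu,\sigma,w)$ on the basis $\mathcal{B}(\nu,\sigma)$, and then compare with the scalar $\mu(\nu,\sigma,w)\gamma_w(G/P)\gamma_{w^{-1}}(G/P')$ given by its definition.

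Before doing the computation I would dispose of a bookkeeping issue: the symbol $D_{(Y,\varphi,n)}({^w\nu},{^w\sigma},w^{-1})$ must make sense. First, as already observed in the discussion preceding Proposition \ref{unitaryscalar}, the representation $({^w\sigma},W)$ of $M'$ is $(Y,\varphi)$-generic. Second, I would check that the element $w^{-1}\in {_GW}$, which sends $\theta'$ to $\theta$, is compatible with $(Y,\varphi)$, i.e.\ $(Y^{w^{-1}})_{\mathfrak{m}'}=Y_{\mathfrak{m}'}$. Starting from $(Y^w)_\mathfrak{m}=Y_\mathfrak{m}$ and using that the projection $\mathfrak{g}\to \mathfrak{m}$ intertwines with the isomorphism $\textup{Ad}(w)\colon \mathfrak{m}\to \mathfrak{m}'$ (because $\textup{Ad}(w)$ permutes the root spaces and carries $\theta$-roots to $\theta'$-roots), applying $\textup{Ad}(w)$ to both sides gives exactly the required identity in $M'$. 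Hence Theorem \ref{scalar} applies to $({^w\nu},{^w\sigma},w^{-1})$ and $D_{(Y,\varphi,n)}({^w\nu},{^w\sigma},w^{-1})$ is well defined as a meromorphic function of $\nu$.

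Given this, for any $v\in W_n$ and $n$ sufficiently large, Theorem \ref{scalar} yields
\begin{align*}
A(\nu,\sigma,w)f_{(\nu,\sigma,v)} &= D_{(Y,\varphi,n)}(\nu,\sigma,w)^{-1}\,f_{({^w\nu},{^w\sigma},v)},\\
A({^w\nu},{^w\sigma},w^{-1})f_{({^w\nu},{^w\sigma},v)} &= D_{(Y,\varphi,n)}({^w\nu},{^w\sigma},w^{-1})^{-1}\,f_{(\nu,\sigma,v)},
\end{align*}
and composing the two gives
$$A({^w\nu},{^w\sigma},w^{-1})A(\nu,\sigma,w)\,f_{(\nu,\sigma,v)}=D_{(Y,\varphi,n)}(\nu,\sigma,w)^{-1}D_{(Y,\varphi,n)}({^w\nu},{^w\sigma},w^{-1})^{-1}\,f_{(\nu,\sigma,v)}.$$
On the other hand, for $\nu$ in the (nonempty Zariski open) set where $I(\nu,\sigma)$ is irreducible the composition $A({^w\nu},{^w\sigma},w^{-1})A(\nu,\sigma,w)$ is, by definition of the Plancherel measure, the scalar operator $\mu(\nu,\sigma,w)\gamma_w(G/P)\gamma_{w^{-1}}(G/P')$. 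Evaluating both expressions on a single non-zero $f_{(\nu,\sigma,v)}$ (which exists since $W_n\neq 0$) gives the desired identity on this open set.

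The final step is analytic continuation: all three factors $\mu(\nu,\sigma,w)$, $D_{(Y,\varphi,n)}(\nu,\sigma,w)$ and $D_{(Y,\varphi,n)}({^w\nu},{^w\sigma},w^{-1})$ are meromorphic in $\nu$ (the former from the standard theory of intertwining operators, the latter two from Theorem \ref{scalar} together with the meromorphy of $A(\nu,\sigma,w)$ discussed in Subsection \ref{Intertwining operators}), so the equality extends from the open irreducibility locus to all $\nu$. The only genuinely non-routine step in the plan is the compatibility check for $w^{-1}$; once that is in hand, everything else is a direct translation of the scalar identities provided by Theorem \ref{scalar} into the defining equation of the Plancherel measure.
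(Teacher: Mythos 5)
Your proof is correct and follows essentially the same route as the paper's: apply the scalar identity from Theorem \ref{scalar} twice and compare with the definition of $\mu(\nu,\sigma,w)$. The two extra checks you include — that $w^{-1}$ is compatible with $(Y,\varphi)$ so that $D_{(Y,\varphi,n)}({^w\nu},{^w\sigma},w^{-1})$ is well defined, and the meromorphic continuation off the irreducibility locus — are left implicit in the paper's one-line computation, and your compatibility argument (transport of the identity $(Y^w)_\mathfrak{m}=Y_\mathfrak{m}$ by $\mathrm{Ad}(w)$, which commutes with the root-space projections) is correct.
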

\begin{proof} Take $v\in W_n$ and constructing the function $f_{(\nu,\sigma,v)}$. We then have that \begin{align*}&\mu(\nu,\sigma,w)\gamma_w(G/P)\gamma_{w^{-1}}(G/P')f_{(\nu,\sigma,v)}=A({^w\nu},{^w\sigma},w^{-1})A(\nu,\sigma,w)f_{(\nu,\sigma,v)}\\
&=D_{(Y,\varphi,n)}(\nu,\sigma,w)^{-1}A({^w\nu},{^w\sigma},w^{-1})f_{({^w\nu},{^w\sigma},v)}=D_{(Y,\varphi,n)}(\nu,\sigma,w)^{-1}D_{(Y,\varphi,n)}({^w\nu},{^w\sigma},w^{-1})^{-1}f_{(\nu,\sigma,v)}\end{align*}

\end{proof}

We have that generalized local coefficients are multiplicative. 

\begin{cor} \label{multiplicativity of geral.local} Let $(\sigma,W)$ be $(Y,\varphi)$--generic representation of $M$. Let $\tilde{w}, \tilde{w}_1,\ldots \tilde{w}_k $ be as in Theorem \ref{factorization of intertwining}. Suppose that $w,w_1,\ldots ,w_k$ are compatible with $(Y,\varphi)$. Let $\nu_1=\nu$, $\nu_i={^{w_{i-1}}\nu_{i-1}}$, $\sigma_1=\sigma$ and $\sigma_i={^{w_{i-1}}\sigma_{i-1}},$ for $2\leq i\leq k-1$. Then 
for $n$ sufficiently large $$D_{(Y,\varphi,n)}(\nu,\sigma,w)=\prod\limits_{i=1}^{n-1}D_{(Y,\varphi,n)}(\nu_i,\sigma_i,w_i).$$
\end{cor}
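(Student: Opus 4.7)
The plan is to combine Theorem \ref{factorization of intertwining} with the eigenfunction property of the canonical sections $f_{(\nu,\sigma,v)}$ that is established in Theorem \ref{scalar}. First I would fix $v \in W_n$ and apply both sides of the factorization
$$A(\nu,\sigma,w)=A(\nu_{k-1},\sigma_{k-1},w_{k-1})\cdots A(\nu_1,\sigma_1,w_1)$$
to $f_{(\nu,\sigma,v)}=f_{(\nu_1,\sigma_1,v)}$. The right-hand side, applied iteratively, should collapse to a product of scalars times the terminal canonical function $f_{({^w}\nu,{^w}\sigma,v)}$. The left-hand side equals $D_{(Y,\varphi,n)}(\nu,\sigma,w)^{-1}\,f_{({^w}\nu,{^w}\sigma,v)}$ by the defining property of the generalized local coefficient. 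Cancelling $f_{({^w}\nu,{^w}\sigma,v)}$, which is non-zero for suitable $v$, would then deliver the formula.

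The inductive step to justify is: for each $1 \le i \le k-1$,
$$A(\nu_i,\sigma_i,w_i)f_{(\nu_i,\sigma_i,v)}=D_{(Y,\varphi,n)}(\nu_i,\sigma_i,w_i)^{-1}f_{(\nu_{i+1},\sigma_{i+1},v)}.$$
This is precisely Theorem \ref{scalar} applied to the triple $(\sigma_i,\nu_i,w_i)$, provided three things hold: $\sigma_i$ is $(Y,\varphi)$-generic; $w_i$ is compatible with $(Y,\varphi)$; and $v$ lies in the $\chi_n$-isotypic component $(\sigma_i,W)^{(M_i)_n,\chi_n}$ for every $i$. Compatibility of each $w_i$ is assumed. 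Genericity of $\sigma_i = {^{w_{i-1}}\sigma_{i-1}}$ propagates by induction from the argument immediately preceding Theorem \ref{scalar}: compatibility of $w_{i-1}$ with $(Y,\varphi)$ identifies the twisted Jacquet quotients of $\sigma_{i-1}$ and of ${^{w_{i-1}}\sigma_{i-1}}$. The same argument, together with $(M_{i+1})_n = w_i(M_i)_n w_i^{-1}$ (which uses $^{w_i}\mathcal{L} = \mathcal{L}$ from $\mathbf{H}_1$) and the equality of the pulled-back characters $^{w_i}\chi_n = \chi_n$, shows that the vector spaces $(\sigma_i,W)^{(M_i)_n,\chi_n}$ all coincide as subspaces of $W$; so a single $v \in W_n$ feeds the entire induction, and the function $f_{(\nu_{i+1},\sigma_{i+1},v)}$ produced at one stage is a legal input for the next.

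I expect the main obstacle to be bookkeeping rather than substance: making ``sufficiently large $n$'' work simultaneously for all $k-1$ factors. Theorem \ref{G_n} furnishes, for each $(Y,\varphi)$-generic $\sigma_i$, an integer $B_i$ beyond which Theorem \ref{scalar} applies; since the family $\{\sigma_i\}_{i=1}^{k-1}$ is finite, taking $n > \max_i B_i$ suffices. Once this uniform $n$ is fixed, the rest is a direct chaining of the scalar identity from Theorem \ref{scalar} followed by matching against the definition of $D_{(Y,\varphi,n)}(\nu,\sigma,w)$.
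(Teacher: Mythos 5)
Your argument is correct and matches the paper's own proof: chain the eigenfunction identity of Theorem \ref{scalar} across the factorization from Theorem \ref{factorization of intertwining} and cancel the nonzero terminal section. The paper leaves the bookkeeping points you raise (propagation of genericity to each $\sigma_i$, the coincidence of the isotypic spaces so one $v$ serves at every stage, and a uniform ``sufficiently large $n$'') implicit, so if anything your write-up is more careful on those details while following the identical route.
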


\begin{proof} We have out of Theorem \ref{factorization of intertwining}. That $$A(\nu,\sigma,w)=A(\nu_{k-1},\sigma_{k-1},w_{k-1})\cdots A(\nu_1,\sigma_1,w_1).$$ 
Take $v\in W_n$ and construct the functions $f_{(\nu_i,\sigma_i,v)}$, $1\leq i\leq k-1$. We then have that $$A(\nu_{i-1},\sigma_{i-1},w_{i-1})f_{(\nu_{i-1},\sigma_{i-1},v)}=D_{(Y,\varphi,n)}(\nu_{i-1},\sigma_{i-1},w_{i-1})^{-1}f_{(\nu_i,\sigma_i,v)}$$
We then get
\begin{align*}&D_{(Y,\varphi,n)}(\nu,\sigma,w)^{-1}f_{({^w\nu},{^w\sigma},v)}=A(\nu,\sigma,w)f_{(\nu,\sigma,v)}\\
&=A(\nu_{k-1},\sigma_{k-1},w_{k-1})\cdots A(\nu_1,\sigma_1,w_1)f_{(\nu_1,\sigma_1,v)}=\prod\limits_{i=1}^{n-1}D_{(Y,\varphi,n)}(\nu_i,\sigma_i,w_i)f_{({^w\nu},{^w\sigma},v)}
\end{align*}
The theorem follows.
\end{proof}

\subsection{Relation to Shahidi's local coefficients}\label{relation to local coeff.}

In this subsection we restrict to the case where $G$ is quasi-split and use the notation and assumptions needed to define local coefficients defined in Subsection \ref{local coefficients}. 
Let $-\Delta$ denote the negative of the simple roots in $\Delta$. There exists a relatively $(-\Delta)$--regular element $X\in \overline{\mathfrak{u}}$, such that, $$\chi(u)=\psi \circ \tr(X\log(u)),\, u\in U.$$

\begin{lem}\label{compatibility conditions} Let $X$ be as above. The conditions that $\chi$ is compatible with $w$, $w_0$ and $w'_0$ implies the conditions $$(X^w)_\mathfrak{m}=X_{\mathfrak{m}},\,(X^{w_0})_\mathfrak{m}=X_{\mathfrak{m}},\,(X^{w'_0})_{\mathfrak{m}'}=X_{\mathfrak{m}'}.$$
\end{lem}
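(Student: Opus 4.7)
The strategy is to convert the multiplicative compatibility of characters into an additive orthogonality condition on $X$. Since $\chi(u)=\psi(\tr(X\log u))$ and $\tr$ is $\mathrm{Ad}$-invariant, the hypothesis $\chi(wuw^{-1})=\chi(u)$ for $u\in M\cap U$ says that the additive character $Z\mapsto\psi(\tr((X^w-X)Z))$ of $\mathfrak{m}\cap\mathfrak{u}$ is trivial (via the polynomial bijection $\exp:\mathfrak{m}\cap\mathfrak{u}\to M\cap U$). Non-triviality of $\psi$ on $F$ then forces $\tr((X^w-X)Z)=0$ for every $Z\in\mathfrak{m}\cap\mathfrak{u}$, i.e.\ $X^w-X$ lies in the $\tr$-orthogonal of $\mathfrak{m}\cap\mathfrak{u}=\bigoplus_{\alpha\in\langle\theta\rangle\cap\Phi^+}\mathfrak{g}_\alpha$. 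In root coordinates this orthogonal is all of $\mathfrak{g}$ except $\mathfrak{m}\cap\overline{\mathfrak{u}}=\bigoplus_{\alpha\in\langle\theta\rangle\cap\Phi^+}\mathfrak{g}_{-\alpha}$, so the $\mathfrak{m}\cap\overline{\mathfrak{u}}$-component of $X^w-X$ vanishes.

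To upgrade this to the full $\mathfrak{m}$-projection statement I would show that both $X_{\mathfrak{m}}$ and $(X^w)_{\mathfrak{m}}$ already lie in $\mathfrak{m}\cap\overline{\mathfrak{u}}$. Writing $X=\sum_{\alpha\in\Delta}X_{-\alpha}$, the identity $X_{\mathfrak{m}}=\sum_{\alpha\in\theta}X_{-\alpha}\in\mathfrak{m}\cap\overline{\mathfrak{u}}$ is immediate. For $X^w=\sum_{\alpha\in\Delta}\mathrm{Ad}(w^{-1})X_{-\alpha}$, each summand lies in $\mathfrak{g}_{-\tilde{w}^{-1}\alpha}$ and contributes to the $\mathfrak{m}$-projection precisely when $\tilde{w}^{-1}\alpha$ lies in the sub-root-system $\langle\theta\rangle$. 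Here the key fact is that $\tilde{w}(\theta)=\theta'\subset\Delta$ forces $\tilde{w}$ to restrict to a bijection $\langle\theta\rangle\to\langle\theta'\rangle$; hence for $\alpha\in\Delta$ one has $\tilde{w}^{-1}\alpha\in\langle\theta\rangle$ iff $\alpha\in\theta'$, and in that case $\tilde{w}^{-1}\alpha\in\theta$, so the summand lies in $\mathfrak{m}\cap\overline{\mathfrak{u}}$. Therefore $(X^w)_{\mathfrak{m}}=\sum_{\alpha\in\theta'}\mathrm{Ad}(w^{-1})X_{-\alpha}\in\mathfrak{m}\cap\overline{\mathfrak{u}}$, and combined with the orthogonality from step one this yields $(X^w)_{\mathfrak{m}}=X_{\mathfrak{m}}$.

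For the remaining two identities I would apply the identical argument with $w$ replaced in turn by $w_0$ and $w_0'$. Because $\tilde{w}_0=\tilde{w}_\ell\tilde{w}_\ell^M$ sends $\theta$ into $\tilde{w}_\ell(-\theta)\subset\Delta$ and $\tilde{w}_0'=\tilde{w}_\ell\tilde{w}_\ell^{M'}$ sends $\theta'$ into $\Delta$, both have the shape required for the orthogonality step and for the bijection-of-sub-root-systems step to run verbatim. In the $w_0'$-case one tests against $\mathfrak{m}'\cap\mathfrak{u}$ and projects onto $\mathfrak{m}'$, producing $(X^{w_0'})_{\mathfrak{m}'}=X_{\mathfrak{m}'}$; the $w_0$-case is literally the first argument with $w$ replaced by $w_0$.

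The main subtlety is the second step: the orthogonality of step one only controls the $\mathfrak{m}\cap\overline{\mathfrak{u}}$-component of $X^w-X$, so a priori $(X^w)_{\mathfrak{m}}$ could carry spurious contributions in $\mathfrak{z}$ or in $\mathfrak{m}\cap\mathfrak{u}$. It is exactly the Weyl-group bijection $\langle\theta\rangle\leftrightarrow\langle\theta'\rangle$ forced by $\tilde{w}(\theta)\subset\Delta$ that eliminates these possibilities and lets the orthogonality upgrade to full equality of $\mathfrak{m}$-projections.
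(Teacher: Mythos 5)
Your argument is correct and follows essentially the same route as the paper: translate the multiplicative compatibility into vanishing of $\tr\big((X^w-X)\,Z\big)$ for $Z\in\mathfrak{m}\cap\mathfrak{u}$ (via non-triviality of $\psi$ and $F$-linearity), and then invoke the non-degenerate pairing of $\mathfrak{m}\cap\mathfrak{u}$ with $\mathfrak{m}\cap\overline{\mathfrak{u}}$. You are merely more explicit than the paper in spelling out the ``second step'' — that $\tilde w(\theta)=\theta'\subset\Delta$ forces $(X^w)_{\mathfrak{m}}$, like $X_{\mathfrak{m}}$, to lie in $\mathfrak{m}\cap\overline{\mathfrak{u}}$ — which is exactly what allows the orthogonality to upgrade to equality of full $\mathfrak{m}$-projections and which the paper leaves implicit in its final sentence.
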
 

\begin{proof}We show $(X^w)_\mathfrak{m}=X_{\mathfrak{m}}$ and analogous arguments will work for $w_0$ and $w'_0$. Indeed, the fact that $w$ is compatible with $\chi$ means that $$\psi\circ \tr(X\log(u))=\chi(u)=\chi(wuw^{-1})=\psi\circ\tr(X\log(wuw^{-1}))=\psi\circ\tr(X^w\log(u))$$
for $u\in U\cap M$. We then get that
\begin{align*}\psi\circ \text{tr}_{\mathfrak{m}}(X_{\mathfrak{m}}\log(u))&=\psi\circ\text{tr}_{\mathfrak{m}}((X^w)_{\mathfrak{m}}\log(u))\implies \text{tr}_{\mathfrak{m}}((X-X^w)_{\mathfrak{m}}Z)\in \mathcal{O}, \forall \, Z\in \overline{\mathfrak{u}}\cap \mathfrak{m}.\\
\implies &\text{tr}_{\mathfrak{m}}((X-X^w)_{\mathfrak{m}}Z)=0, \forall \, Z\in \overline{\mathfrak{u}}\cap \mathfrak{m}.
\end{align*}
Using the fact that the bilinear form $\text{tr}_{\mathfrak{m}}$ is non-degenerate and that the dual of the Lie algebra of $U\cap M$ is the Lie algebra of $\overline{U}\cap M$, we deduce that $(X^w)_\mathfrak{m}=X_{\mathfrak{m}}$. Similar arguments work to show that $(X^{w_0})_\mathfrak{m}=X_{\mathfrak{m}},\,(X^{w'_0})_{\mathfrak{m}'}=X_{\mathfrak{m}'}$. 
\end{proof}

\begin{lem}\label{X,Y} Let $Y=w_{\ell}^{-1}X w_{\ell}={X^{w_{\ell}}}$. Then $(Y,\varphi)$ is compatible with $w$, i.e. $(Y^w)_\mathfrak{m}=Y_\mathfrak{m}$.
\end{lem}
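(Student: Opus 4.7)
The plan is to reduce both $(Y^w)_\mathfrak{m}$ and $Y_\mathfrak{m}$ to the same expression $(X_\mathfrak{m})^{(w_\ell^M)^{-1}}$, using all three compatibilities supplied by Lemma~\ref{compatibility conditions}. Two elementary observations support the calculation. \emph{Observation 1}: for $m\in M$, $\mathrm{Ad}(m)$ stabilises each summand of $\mathfrak{g} = \overline{\mathfrak{n}} \oplus \mathfrak{m} \oplus \mathfrak{n}$, so projection to $\mathfrak{m}$ commutes with conjugation by $m$; the analogous fact holds for the decomposition with respect to $P'$. \emph{Observation 2}: since $\tilde{w}$ restricts to a bijection $\Phi_\theta \leftrightarrow \Phi_{\theta'}$ (and hence $\Phi \setminus \Phi_\theta \leftrightarrow \Phi \setminus \Phi_{\theta'}$), conjugation by $w^{-1}$ sends $\mathfrak{m}'$ to $\mathfrak{m}$ and sends $\overline{\mathfrak{n}'} \oplus \mathfrak{n}'$ into the sum of root spaces indexed by $\Phi \setminus \Phi_\theta$. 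Consequently, for any $Z \in \mathfrak{g}$,
\[
(Z^w)_\mathfrak{m} = (Z_{\mathfrak{m}'})^w. \qquad (\star)
\]

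The main computation starts from $w_\ell^{M'} = w w_\ell^M w^{-1}$ (recalled just before this subsection) together with $w_0' = w_\ell w_\ell^{M'}$; these rearrange to the identity $w_\ell w = w_0'\, w\, (w_\ell^M)^{-1}$. Therefore
\[
(Y^w)_\mathfrak{m} = (X^{w_\ell w})_\mathfrak{m} = \left(\left((X^{w_0'})^w\right)^{(w_\ell^M)^{-1}}\right)_\mathfrak{m} = \left(\left((X^{w_0'})^w\right)_\mathfrak{m}\right)^{(w_\ell^M)^{-1}},
\]
where the last equality is Observation 1 applied to $(w_\ell^M)^{-1} \in M$. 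Applying $(\star)$ with $Z = X^{w_0'}$ and then the compatibility $(X^{w_0'})_{\mathfrak{m}'} = X_{\mathfrak{m}'}$ gives $\left((X^{w_0'})^w\right)_\mathfrak{m} = (X_{\mathfrak{m}'})^w$. Applying $(\star)$ once more with $Z = X$ and combining with the compatibility $(X^w)_\mathfrak{m} = X_\mathfrak{m}$ yields $(X_{\mathfrak{m}'})^w = X_\mathfrak{m}$. Assembling,
\[
(Y^w)_\mathfrak{m} = (X_\mathfrak{m})^{(w_\ell^M)^{-1}}.
\]

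To finish, the same technique applied to $w_0 = w_\ell w_\ell^M$ computes $Y_\mathfrak{m}$: since $w_\ell^M \in M$, Observation 1 gives
\[
X_\mathfrak{m} = (X^{w_0})_\mathfrak{m} = \left((X^{w_\ell})^{w_\ell^M}\right)_\mathfrak{m} = (Y^{w_\ell^M})_\mathfrak{m} = (Y_\mathfrak{m})^{w_\ell^M},
\]
so $Y_\mathfrak{m} = (X_\mathfrak{m})^{(w_\ell^M)^{-1}}$, matching the expression already obtained for $(Y^w)_\mathfrak{m}$. The only substantive thing to check is Observation~2, which boils down to verifying that $w^{-1}$ sends root spaces outside $\Phi_{\theta'}$ to root spaces outside $\Phi_\theta$; this is immediate from the Weyl-group bijection $w(\Phi_\theta) = \Phi_{\theta'}$. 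Everything else is bookkeeping with Weyl-group identities and the triangular decompositions of $\mathfrak{g}$.
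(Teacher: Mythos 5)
Your proof is correct and is essentially the paper's argument: both hinge on Lemma \ref{compatibility conditions}, on the fact that projection to $\mathfrak{m}$ commutes with conjugation by elements of $M$, and on the identity $(Z^w)_\mathfrak{m}=(Z_{\mathfrak{m}'})^w$ coming from $\tilde{w}(\Phi_\theta)=\Phi_{\theta'}$. The paper first extracts $({}^{w_\ell^M}X)_\mathfrak{m}=Y_\mathfrak{m}$ from the $w_0$-compatibility and then chains the $w_0'$- and $w$-compatibilities to reach $(Y^w)_\mathfrak{m}=Y_\mathfrak{m}$, whereas you reduce both sides to the common expression $(X_\mathfrak{m})^{(w_\ell^M)^{-1}}$; this is only a cosmetic reorganization of the same bookkeeping.
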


\begin{proof} We get out of $(X^{w_0})_\mathfrak{m}=X_{\mathfrak{m}}$ that 
\begin{align} ({^{w_{\ell}^{M}}X)_{\mathfrak{m}}}=(X^{w_{\ell}})_\mathfrak{m}=Y_{\mathfrak{m}}\label{m}&
\end{align}
 Using now $(X^{w'_0})_\mathfrak{m'}=X_{\mathfrak{m'}}$ we see  
\begin{align*} (X^{w_{\ell}ww_{\ell}^{M}w^{-1}})_{\mathfrak{m'}}=X_\mathfrak{m'} \implies &(X^{w_{\ell}ww_{\ell}^{M}})_{\mathfrak{m}}=(X^w)_\mathfrak{m}=X_\mathfrak{m}\\
\implies (X^{w_{\ell}w})_{\mathfrak{m}}=({^{w_{\ell}^{M}}X})_\mathfrak{m} \implies & \text{ using \ref{m} we get } (Y^w)_\mathfrak{m}=Y_\mathfrak{m}.  
\end{align*}  
This finishes the proof of the lemma.
\end{proof} 

\begin{lem}\label{generic} Let $Y$ be as in Lemma \ref{X,Y}. We have $\chi^{w_{\ell}}(u)=\psi\circ \tr(Y\log(u))$, for $u\in \overline{U}$, and $^{w_{\ell}^M}\chi(u)=\psi\circ \text{tr}_{\mathfrak{m}}(Y_{\mathfrak{m}}\log(u))$, for $u\in \overline{U}\cap M$. Moreover, if $(\sigma,W)$ is an irreducible $\chi$--generic representation of $M$, then $(\sigma,W)$ is $(Y,\varphi)$--generic. 
\end{lem}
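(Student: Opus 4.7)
The plan is to prove each of the three assertions in turn. The first two are essentially formal calculations with characters of unipotent subgroups and rely on the $G$-invariance of $\tr$; the third reduces, via conjugation by $w_\ell^M$, to the second.

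For the first assertion, note that since $w_\ell$ is the longest element of ${_GW}$, conjugation by $w_\ell$ interchanges $U$ and $\overline{U}$, so $\chi(w_\ell u w_\ell^{-1})$ makes sense for $u\in \overline{U}$. Expanding with $\chi(u')=\psi\circ\tr(X\log u')$ and using invariance of $\tr$ gives
\[
\chi^{w_\ell}(u)=\psi\circ\tr(X\cdot\operatorname{Ad}(w_\ell)\log u)=\psi\circ\tr(\operatorname{Ad}(w_\ell^{-1})X\cdot\log u)=\psi\circ\tr(Y\log u).
\]
The same manipulation inside $M$ --- using that $w_\ell^M$ interchanges $U\cap M$ and $\overline{U}\cap M$ --- yields ${}^{w_\ell^M}\chi(u)=\psi\circ\tr({}^{w_\ell^M}X\cdot\log u)$ for $u\in\overline{U}\cap M$. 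Now $\log u\in\mathfrak{m}$, and since the restriction of $\tr$ to $\mathfrak{m}\times\mathfrak{m}$ coincides with $\text{tr}_{\mathfrak{m}}$ while $\mathfrak{m}$ is $\tr$-orthogonal to its complement in $\mathfrak{g}$ (both facts recorded at the start of Section \ref{Gen.local.coeff.}), only the $\mathfrak{m}$-component of ${}^{w_\ell^M}X$ contributes. The identity $({}^{w_\ell^M}X)_{\mathfrak{m}}=Y_{\mathfrak{m}}$ already established in the proof of Lemma \ref{X,Y} then completes the second assertion.

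For the third assertion, take a nonzero $\chi$-Whittaker functional $\lambda\in\Hom_{U\cap M}(\sigma,\chi)$ and set $\mu(v):=\lambda(\sigma((w_\ell^M)^{-1})v)$. Since $(w_\ell^M)^{-1}\bar u w_\ell^M\in U\cap M$ for $\bar u\in\overline{U}\cap M$, a direct calculation gives
\[
\mu(\sigma(\bar u)v)=\lambda\bigl(\sigma((w_\ell^M)^{-1}\bar u w_\ell^M)\sigma((w_\ell^M)^{-1})v\bigr)=\chi((w_\ell^M)^{-1}\bar u w_\ell^M)\mu(v)={}^{w_\ell^M}\chi(\bar u)\mu(v),
\]
and by the second assertion the right-hand character is precisely $\chi^M$. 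Hence $\mu$ is a nonzero linear form witnessing $W_{Y_{\mathfrak{m}},\overline{U}\cap M}\neq 0$. Since $Y_{\mathfrak{m}}=\sum_{\alpha\in\theta}Y_{\alpha}$ is relatively regular in $M$, its orbit is maximal among nilpotent orbits of $\mathfrak{m}$, so $(\sigma,W)$ is $(Y_{\mathfrak{m}},\varphi)$-generic, that is, $(Y,\varphi)$-generic in the sense of Section \ref{Gen.local.coeff.}. There is no substantive obstacle here; the entire argument is careful bookkeeping, and the only point requiring care is keeping track of the conventions $X^w$ versus ${}^wX$, together with the fact that $Y\in\mathfrak{u}$ while $X\in\overline{\mathfrak{u}}$.
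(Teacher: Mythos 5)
Your proof is correct and takes essentially the same route as the paper: the two character identities come from the same manipulation of the invariant form $\tr$ together with equation (\ref{m}) from Lemma \ref{X,Y}, and the third assertion is obtained by the same transport $\lambda\mapsto\lambda\circ\sigma((w_\ell^M)^{-1})$ of the Whittaker functional, which you merely spell out a little more explicitly than the paper does.
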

\begin{proof} We have for $u\in \overline{U}$ that \begin{align*} \chi^{w_{\ell}}(u)=&\psi\circ\tr(X\log(w_{\ell}uw_{\ell}^{-1}))=\psi\circ\tr(X^{w_{\ell}}\log(u))=\psi\circ \tr(Y\log(u))\\
\end{align*} 
We have for  $u\in \overline{U}\cap M$ \begin{align*} ^{w_{\ell}^M}\chi(u)=&\psi\circ\tr(X\log((w_{\ell}^M)^{-1}uw_{\ell}^{M}))=\psi\circ\text{tr}_{\mathfrak{m}}((^{w_{\ell}^M}X)_{\mathfrak{m}}\log(u))=\psi\circ\text{tr}_{\mathfrak{m}}(Y_{\mathfrak{m}}\log(u))\\
\end{align*} 
 In order to prove that then $(\sigma,W)$ is $(Y,\varphi)$--generic, we need to show that $W_{Y_\mathfrak{m},\overline{U}\cap M}\neq 0$. We have that the space $\Hom_{U\cap M}(\sigma,\chi)$ is one dimensional, fix then $\lambda$ a Whittaker functional in this space. We have a map from $$\Hom_{U\cap M}(\sigma,\chi)\longrightarrow \Hom_{\overline{U}\cap M}(\sigma,{^{w_{\ell}^{M}}\chi}),$$ given by $\lambda\mapsto \lambda \circ \sigma({w_{\ell}^{M}})^{-1}$. We therefore have that $\Hom_{\overline{U}\cap M}(\sigma,{^{w_{\ell}^{M}}\chi})\neq 0$ and thus $W_{Y_\mathfrak{m},\overline{U}\cap M}\neq 0.$\end{proof} 
Let $t=\varphi(\varpi)$. We define $\tilde{\chi}$ by $$\tilde{\chi}(u)=\chi(t^{-n}ut^{n})=\psi\circ \tr(\varpi^{-2n}X\log(u)),\, u\in U.$$ An easy computation shows that \begin{align}\tilde{\chi}(u)=\chi(w_{\ell}t^{n}w_{\ell}^{-1}uw_{\ell}t^{-n}w_{\ell}^{-1}),\,u\in U\label{t}\end{align}
Indeed, we have that  
\begin{align*}\tilde{\chi}(u)=&\psi\circ \tr(\varpi^{-2n}X\log(u))=\psi\circ \tr(\varpi^{-2n}X^{w_\ell}\log(w_{\ell}^{-1}uw_{\ell}))\\
=&\psi\circ \tr(\varpi^{-2n}Y\log(w_{\ell}^{-1}uw_{\ell}))=\psi\circ \tr(t^{-n}Yt^n\log(w_{\ell}^{-1}uw_{\ell}))\\
=&\psi\circ \tr(Y\log(t^nw_{\ell}^{-1}uw_{\ell}t^{-n}))=\psi\circ \tr(X\log(w_{\ell}t^nw_{\ell}^{-1}uw_{\ell}t^{-n}w_\ell))\\
=&\psi\circ \tr(X^{w_{\ell}}\log(t^nw_{\ell}^{-1}uw_{\ell}t^{-n}))=\chi(w_{\ell}t^{n}w_{\ell}^{-1}uw_{\ell}t^{-n}w_{\ell}^{-1})\end{align*}

Since $(\sigma, W)$ is $\chi$--generic we get that $({^{t^n}\sigma},W)$ is $\tilde{\chi}$--generic, and since $({^{t^n}\sigma},W)$ is isomorphic to $(\sigma,W)$, we conclude that $(\sigma,W)$ is $\tilde{\chi}$--generic. We claim that $\tilde{\chi}$ is compatible with $w$, $w_0$ and $w'_0$. We show that $\tilde{\chi}$ is compatible with $w$ and analogous arguments will work for $w_0$ and $w'_0$. Let $u\in U\cap M$, then  
\begin{align*}{\tilde{\chi}}^w(u)=&\psi(\tr(\varpi^{-2n} X\log(wuw^{-1}))= \psi(\varpi^{-2n}\tr(Xw\log(u)w^{-1}))\\
=&\psi(\varpi^{-2n}\tr(X^w\log(u)))=\psi(\varpi^{-2n}\text{tr}_{\mathfrak{m}}((X^w)_\mathfrak{m}\log(u)))\\
=&\psi(\varpi^{-2n}\text{tr}_{\mathfrak{m}}(X_\mathfrak{m}\log(u)))=\psi(\varpi^{-2n}\tr(X\log(u)))\\
=&\tilde{\chi}(u)
\end{align*}
We now state and prove a theorem that relates local coefficients to generalized local coefficients. 

\begin{thm} Let $(\sigma,W)$ be $\chi$--generic representation of $M$. Suppose that $\chi$ is compatible with $w$, $w_0$ and $w_0'$. Let $\chi(u)=\psi \circ \tr(X\log(u)),\,u \in U,$ for some relatively $(-\Delta)$--regular element $X$ i.e. of the form $X=\sum_{\alpha\in \Delta} X_{-\alpha}$, $X_{-\alpha}\in \mathfrak{g}_{-\alpha}$. Let $Y=X^{w_{\ell}}$. Then for $n$ sufficiently large we get $$C_{\tilde{\chi}}(\nu,\sigma,w)=\frac{\mu_{\overline{N}}(\overline{N}_{n})}{\mu_{\overline{N'}}(\overline{N'}_{n})}D_{(Y,\varphi,n)}(\nu,\sigma,w)\text{ where } \tilde{\chi}={^{t^n}\chi}.$$
\end{thm}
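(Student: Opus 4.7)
The plan is to apply the defining relation for $C_{\tilde\chi}$ to the canonical function $f_{(\nu,\sigma,v)}$ constructed in Proposition~\ref{functions}, for some fixed $v\in W_N$ (I write $N$ for the index $n$ of the theorem, freeing the symbol $n$ for group elements). Combining Shahidi's defining relation with the scalar identity $A(\nu,\sigma,w)f_{(\nu,\sigma,v)}=D_{(Y,\varphi,N)}(\nu,\sigma,w)^{-1}f_{({^w\nu},{^w\sigma},v)}$ of Theorem~\ref{scalar} gives
\[
\lambda_{\tilde\chi}(\nu,\sigma)\bigl(f_{(\nu,\sigma,v)}\bigr)=C_{\tilde\chi}(\nu,\sigma,w)\,D_{(Y,\varphi,N)}(\nu,\sigma,w)^{-1}\,\lambda_{\tilde\chi}({^w\nu},{^w\sigma})\bigl(f_{({^w\nu},{^w\sigma},v)}\bigr),
\]
so the theorem reduces to evaluating the two Whittaker integrals explicitly and comparing them.

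For the left-hand integral, I would substitute $n=w_0\bar n w_0^{-1}$ to convert the integral over $w_0\overline N w_0^{-1}$ into an integral over $\bar n\in\overline N$ with character $\overline{\tilde\chi(w_0\bar n w_0^{-1})}$ and a Jacobian from $\mathrm{Ad}(w_0)$. The Bruhat decomposition applied to $\bar n w_0^{-1}=p(\bar n)\bar j(\bar n)$, with $p(\bar n)\in P$ and $\bar j(\bar n)\in\overline N$, together with the compact support of $f_{(\nu,\sigma,v)}$ in $P\overline N_N$, confines the integration to the locus where $\bar j(\bar n)\in\overline N_N$; on this locus the left $P$-equivariance of $f_{(\nu,\sigma,v)}$ produces the integrand $\lambda\bigl(\sigma(p(\bar n))v\bigr)\,\nu\delta_P(p(\bar n))^{-1/2}\chi_N(\bar j(\bar n))$. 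The crucial cancellation, encoded in equation~(\ref{t}) and Lemma~\ref{X,Y} via $Y=X^{w_\ell}$ together with the decomposition $w_0=w_\ell w_\ell^M$, is that on this locus $\overline{\tilde\chi(w_0\bar n w_0^{-1})}$ matches $\chi_N(\bar j(\bar n))^{-1}$ exactly, so the two characters cancel.

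The remaining contributions — the Jacobian of the change of variable $\bar n\leftrightarrow \bar j(\bar n)$, the modulus $\delta_P^{-1/2}$, the unramified twist $\nu$, and the inner Whittaker pairing $\lambda\circ\sigma(p(\bar n))$ — should collapse to a single constant $c_0(\nu,\sigma)$ independent of $\bar j$, so the integral evaluates to $c_0(\nu,\sigma)\,\mu_{\overline N}(\overline N_N)\,\lambda(v)$. The mirror computation for the right-hand Whittaker evaluation, carried out with $w_0'=w_\ell w_\ell^{M'}$ and $P'$ in place of $w_0$ and $P$, and using Lemma~\ref{compatibility conditions} to transfer the character identity to $w_0'$, will yield $c_0({^w\nu},{^w\sigma})\,\mu_{\overline{N'}}(\overline{N'}_N)\,\lambda(v)$; the compatibility of $\chi$ with $w$, together with the relation $w_\ell^{M'}=w w_\ell^M w^{-1}$, forces the two constants $c_0(\nu,\sigma)$ and $c_0({^w\nu},{^w\sigma})$ to coincide. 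Taking the ratio then removes both $c_0$ and $\lambda(v)$, and the opening identity delivers the stated formula.

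The main obstacle will be the character and Jacobian bookkeeping of the second and third paragraphs: one must verify that after the Bruhat parametrization of (an open subset of) $\overline N$ by $\overline N_N$ the integrand genuinely depends on $\bar n$ only through $\bar j(\bar n)$, so that the combination of $\delta_P^{-1/2}$, the unramified twist, the Jacobian and the residual character contributions is truly a single scalar that factors out of the integral. The delicate character-matching $\tilde\chi(w_0\bar n w_0^{-1})=\chi_N(\bar j(\bar n))^{-1}$ on the support, obtained from equation~(\ref{t}) via $Y=X^{w_\ell}$, is the technical heart of this step.
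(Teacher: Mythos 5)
Your framing of the problem — pair the defining relation for $C_{\tilde\chi}$ with Theorem~\ref{scalar} and then evaluate Whittaker integrals — is right, but the execution misses the essential device the paper uses. You propose to evaluate $\lambda_{\tilde\chi}(\nu,\sigma)$ \emph{directly} on $f_{(\nu,\sigma,v)}$. The paper instead evaluates it on the right translate $I(\nu,\sigma)(w_\ell)f_{(\nu,\sigma,v)}$ (and similarly on the $w$-side), which is legitimate since the defining relation for $C_{\tilde\chi}$ and the scalar identity of Theorem~\ref{scalar} both persist after applying $I(\cdot)(w_\ell)$, and the intertwining operator commutes with right translation. This translate is not a cosmetic choice. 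With it, the argument of $f$ becomes $w_0^{-1}xw_\ell=(w_\ell^M)^{-1}w_\ell^{-1}xw_\ell$ with $(w_\ell^M)^{-1}\in P$, so the support condition reduces to $w_\ell^{-1}xw_\ell\in\overline{N}_n$, i.e.\ $x\in w_0\overline{N}_nw_0^{-1}$ (using $\mathbf{H}_1$), a small compact set; and on that set the $\chi_n$-equivariance of $f$ produces precisely $\chi(w_\ell t^n w_\ell^{-1}xw_\ell t^{-n}w_\ell^{-1})=\tilde\chi(x)$ via equation~(\ref{t}), so the integrand reduces to the constant $\tilde\chi(x)\overline{\tilde\chi(x)}=1$ and the integral is literally a measure.

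Without the $w_\ell$-translate, neither of these two reductions occurs. Your locus $\{\bar n\in\overline{N}: \bar j(\bar n)\in\overline{N}_N\}$ is a non-compact region (in $\mathrm{GL}_3$ with $\theta=\{\alpha_1\}$ it forces the $\alpha_1+\alpha_2$-coordinate of $\bar n$ to be large), so the "Jacobian and $\delta_P^{-1/2}\nu$ bookkeeping" does not collapse to a constant but is an honest integral. More seriously, the claimed cancellation $\overline{\tilde\chi(w_0\bar nw_0^{-1})}=\chi_N(\bar j(\bar n))^{-1}$ is false: $\chi_N$ is evaluated at $\bar j(\bar n)$, a nonlinear rational function of $\bar n$, while $\tilde\chi$ is evaluated at the linear conjugate $w_0\bar nw_0^{-1}$; equation~(\ref{t}) equates $\tilde\chi(x)$ with $\chi$ evaluated at the conjugate $w_\ell t^n w_\ell^{-1}xw_\ell t^{-n}w_\ell^{-1}$, which is exactly what appears \emph{after} the $w_\ell$-translate, not at the Bruhat component $\bar j(\bar n)$. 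A direct computation in $\mathrm{GL}_3$ confirms the two exponents differ by a term proportional to $\varpi^{-2N}a(1-1/b)$, which is unbounded on the support. Finally, even had the cancellation held, your ratio requires knowing the Whittaker evaluation is nonzero; the paper has to devote a separate argument to proving $\tilde\lambda(\sigma((w_\ell^M)^{-1})v)\neq0$, a point your proposal does not address.
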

\begin{proof} We have already showed that $\tilde{\chi}$ is compatible with $w$, $w_0$ and $w_0'$. It does makes sense then to talk about $C_{\tilde{\chi}}(\nu,\sigma,w)$. 
 We also have by Lemma \ref{X,Y} and Lemma \ref{generic}, that $(\sigma,W)$ is $(Y,\varphi)$--generic and that $w$ is compatible with $(Y,\varphi)$. We assume that $n$ is sufficiently large so that $W_n$ is one dimensional which is possible by Theorem \ref{2}; and let  $v$ be a non-zero vector in $W_n$. We then construct the function $f_{(\nu,\sigma,v)}\in V(\nu,\sigma)$. Recall the definition of $f_{(\nu,\sigma,v)} $ in our situation to be the function with support in $P\overline{N}_n$ given by $f(pj)=\sigma(p)\delta_P(p)^{-1/2}\nu(p)\chi(w_{\ell}t^njt^{-n}w_{\ell}^{-1})v$, for $p\in P, j\in \overline{N}_n$. We compute $\tilde{\lambda}_{\tilde{\chi}}(\nu,\sigma)\left(I(\nu,\sigma)(w_{\ell})f_{(\nu,\sigma,v)}\right)$. We then get   
\begin{align}\tilde{\lambda}&_{\tilde{\chi}}(\nu,\sigma)\left(I(\nu,\sigma)(w_{\ell})f_{(\nu,\sigma,v)}\right)\notag\\
&=\int\limits_{w_0\overline{N}{w_0}^{-1}}\tilde{\lambda}(f(w_0^{-1} xw_{\ell}))\overline{\tilde{\chi}(x)}dx\notag \\
&=\int\limits_{w_0\overline{N}{w_0}^{-1}}\tilde{\lambda}(f({(w_{\ell}^{M})}^{-1} w_{\ell}^{-1} xw_{\ell}))\overline{\tilde{\chi}(x)}dx\notag\\
&=\delta_P({(w_{\ell}^{M})}^{-1})^{-1/2}\nu({(w_{\ell}^{M})}^{-1})\int\limits_{w_0\overline{N}{w_0}^{-1}}\tilde{\lambda}(\sigma({(w_{\ell}^{M})}^{-1})f( w_{\ell}^{-1} xw_{\ell}))\overline{\tilde{\chi}(x)}dx\notag\\
&=\delta_P({(w_{\ell})^{M}}^{-1})^{-1/2}\nu({(w_{\ell}^{M})}^{-1})\int\limits_{w_0\overline{N}_{n}w_0^{-1}}\tilde{\lambda}(\sigma({(w_{\ell}^{M})}^{-1})v)\chi(w_{\ell}t^{n}w_{\ell}^{-1}xw_{\ell}t^{-n}w_{\ell}^{-1})\overline{\tilde{\chi}(x)}dx\notag\\
&=\delta_P({(w_{\ell})^{M}}^{-1})^{-1/2}\nu({(w_{\ell}^{M})}^{-1})\int\limits_{w_0\overline{N}_{n}w_0^{-1}}\tilde{\lambda}(\sigma({(w_{\ell}^{M})}^{-1})v)\tilde{\chi}(x)\overline{\tilde{\chi}(x)}dx \text{ (by equation \ref{t})}\notag\\
&=\delta_P({(w_{\ell}^{M})}^{-1})^{-1/2}\nu({(w_{\ell}^{M})}^{-1})\mu_{w_0\overline{N}{w_0}^{-1}}(w_0\overline{N}_{n}w_0^{-1})\tilde{\lambda}(\sigma({(w_{\ell}^{M})}^{-1})v)\label{lambda f}
\end{align}
A similar computation shows 
\begin{align}&\tilde{\lambda}_{\tilde{\chi}}({^w\nu},{^w\sigma})\left(I({^w\nu},{^w\sigma})(w_{\ell})f_{({^w\nu},{^w\sigma},v)}\right)\notag\\
&=\delta_{P'}({(w_{\ell}^{M'})}^{-1})^{-1/2}{^w\nu}({(w_{\ell}^{M'})}^{-1})\mu_{w_0'\overline{N'}{w_0'}^{-1}}({w_0'}\overline{N'}_{n}{w_0'}^{-1})\tilde{\lambda}(^w\sigma({(w_{\ell}^{M'})}^{-1})v)\label{lambda of f'}
\end{align}

Since $P\cap G_n$ is a compact open subgroup of $P$ invariant under conjugation by ${(w_{\ell}^{M})}^{-1}$ we get that $\delta_{P}({(w_{\ell}^{M})}^{-1})=1$, and similarly $\delta_{P'}({(w_{\ell}^{M'})}^{-1})=1$. Using the fact that $w^{-1}w_{\ell}^{M'}w=w_{\ell}^{M}$ we get that $${^w\nu}({(w_{\ell}^{M'})}^{-1})={\nu}({(w_{\ell}^{M})}^{-1}) \text{ and }\tilde{\lambda}(^w\sigma({(w_{\ell}^{M'})}^{-1})v)=\tilde{\lambda}(\sigma({(w_{\ell}^{M})}^{-1})v)$$
We also have that the measure $\mu_{w_0\overline{N}{w_0}^{-1}}$ is that of $\mu_{\overline{N}}$ after conjugation by $w_0$. We then get that $$\mu_{w_0\overline{N}{w_0}^{-1}}(w_0\overline{N}_{n}{w_0}^{-1})=\mu_{\overline{N}}(\overline{N}_{n})$$ 
Analogously, we get that $$\mu_{w_0'\overline{N'}{w_0'}^{-1}}({w_0'}\overline{N'}_{n}{w_0'}^{-1})=\mu_{\overline{N'}}(\overline{N'}_{n})$$
Suppose for the moment that $\tilde{\lambda}(\sigma({(w_{\ell}^{M})}^{-1})v)$ is not zero. We can then divide and obtain from equation \ref{lambda f} and equation \ref{lambda of f'}  $$\frac{\tilde{\lambda}_{\tilde{\chi}}(\nu,\sigma)\left(I(\nu,\sigma)(w_{\ell})f_{(\nu,\sigma,v)}\right)}{\tilde{\lambda}_{\tilde{\chi}}({^w\nu},{^w\sigma})\left(I({^w\nu},{^w\sigma})(w_{\ell})f_{({^w\nu},{^w\sigma},v)}\right)}=\frac{\mu_{\overline{N}}(\overline{N}_{n})}{\mu_{\overline{N'}}(\overline{N'}_{n})}$$
We then get 
\begin{align*} \tilde{\lambda}_{\tilde{\chi}}(\nu,\sigma)\left(I(\nu,\sigma)(w_{\ell})f_{(\nu,\sigma,v)}\right)=&C_{\tilde{\chi}}(\nu,\sigma,w)\tilde{\lambda}_{\tilde{\chi}}({^w\nu},{^w\sigma})\left(A(\nu,\sigma,w)I(\nu,\sigma)(w_{\ell})f_{(\nu,\sigma,v)}\right)\\
=&C_{\tilde{\chi}}(\nu,\sigma,w)\tilde{\lambda}_{\tilde{\chi}}({^w\nu},{^w\sigma})\left(I({^w\nu},{^w\sigma})(w_{\ell})A(\nu,\sigma,w)f_{(\nu,\sigma,v)}\right)\\
=&C_{\tilde{\chi}}(\nu,\sigma,w)\tilde{\lambda}_{\tilde{\chi}}({^w\nu},{^w\sigma})\left(I({^w\nu},{^w\sigma})(w_{\ell})D_{(Y\varphi,n)}(\nu,\sigma,w)^{-1}f_{({w^\nu}, {w^\sigma},v)}\right)\\
=&C_{\tilde{\chi}}(\nu,\sigma,w)D_{(Y\varphi,n)}(\nu,\sigma,w)^{-1}\tilde{\lambda}_{\tilde{\chi}}({^w\nu},{^w\sigma})\left(I({^w\nu},{^w\sigma})(w_{\ell})f_{({w^\nu}, {w^\sigma},v)}\right)\\
\implies & C_{\tilde{\chi}}(\nu,\sigma,w)D_{(Y\varphi,n)}(\nu,\sigma,w)^{-1}=\frac{\mu_{\overline{N}}(\overline{N}_{n})}{\mu_{\overline{N'}}(\overline{N'}_{n})}
\end{align*}

We now show that $\tilde{\lambda}(\sigma({(w_{\ell}^{M})}^{-1})v)$ is not zero. We have by the proof of Lemma \ref{generic} that $W_{Y_\mathfrak{m},\overline{U}\cap M}$ is one dimensional. We therefore have that for $n$ sufficiently large $W_n$ is one dimensional. 
Let $W_n'=\sigma(t^n)W_n$. Consider the map $j_n':W_n'\mapsto W_{Y_\mathfrak{m},U\cap M}$ given by the inclusion of $W_n'$ into $W$ composition with the quotient map into $ W_{Y_{\mathfrak{m}},\overline{U}\cap M}$. From the proof of Theorem 1 $(ii)$ in \cite{Varma} we have that $j_n':W_n'\mapsto W_{Y_\mathfrak{m},U\cap M}$ is an isomorphism (we are assuming $n$ is sufficiently large that makes our $j'_n$ coincide with the $j'_n$ in \cite{Varma} and it is thus an isomorphism). Consider $$\lambda \in \Hom_{U\cap M}(\sigma,\chi)$$ we get $$\lambda\circ \sigma(w_{\ell}^M)^{-1}\in \Hom_{\overline{U}\cap M}(\sigma, {^{w_{\ell}^M}\chi}).$$ If we take a non zero vector $v' \in W'_n$, we get that $j'_n(v')\neq 0$ and because $\lambda\circ \sigma(w_{\ell}^M)^{-1}$ factors through $ W_{Y_{\mathfrak{m}},\overline{U}\cap M}$, we get $\lambda(\sigma(w_{\ell}^M)^{-1}v')\neq 0$. In particular $\lambda ( \sigma(w_{\ell}^M)^{-1}\sigma(t^n)v)\neq 0$. We claim that $\lambda\circ \sigma ({(w_{\ell}^M)}^{-1}t^n) \in \Hom_{\overline{U}\cap M}(\sigma, {^{w_{\ell}^M}\tilde{\chi}}) $. Let $x={(w_{\ell}^M)}^{-1}t^n$. We have that $$\lambda\circ \sigma (x) \in \Hom_{\overline{U}\cap M}(\sigma, \chi^x). $$ In order to prove the claim we will show that $\chi^{x}(\overline{u})={^{w_{\ell}^M}\tilde{\chi}(\overline{u})},\, \overline{u}\in \overline{U}\cap M$. We have 

\begin{align*}{\chi}^x(u)=&\psi(\tr( X\log({(w_{\ell}^M)}^{-1}t^{n}ut^{-n}w_{\ell}^M))= \psi(\tr({^{w_{\ell}^M}X}\log(t^{n}ut^{-n})))\\
=&\psi(\text{tr}_{\mathfrak{m}}(({^{w_{\ell}^M}X})_\mathfrak{m}\log(t^{n}ut^{-n}))=\psi(\text{tr}_{\mathfrak{m}}(Y_\mathfrak{m}\log(t^{n}ut^{-n}))\\
=&\psi(\text{tr}_{\mathfrak{m}}(\varpi^{-2n}Y_\mathfrak{m}\log(u))=\psi(\text{tr}_{\mathfrak{m}}(\varpi^{-2n}({^{w_{\ell}^M}X})_\mathfrak{m}\log(u))\\
=&\psi(\tr(\varpi^{-2n}{X}\log({(w_{\ell}^M)}^{-1}uw_{\ell}^M)))={^{w_{\ell}^M}\tilde{\chi}(\overline{u})}.
\end{align*}
This finishes the proof of the claim. We then have that since $\tilde{\lambda}\circ \sigma(w_{\ell}^M)^{-1}$ is in  $\Hom_{\overline{U}\cap M}(\sigma, {^{w_{\ell}^M}\tilde{\chi}})$, it must be a non--zero multiple of $\lambda\circ \sigma ({(w_{\ell}^M)}^{-1}t^n)$ and hence $\tilde{\lambda}(\sigma(w_{\ell}^M)^{-1}v)\neq 0$. 

\end{proof}

\begin{remark} We get that $$\frac{\mu_{\overline{N}}(\overline{N}_{n})}{\mu_{\overline{N'}}(\overline{N'}_{n})}$$ is independent of $n$. We have that the measure $\mu_N$ is the measure coming from $\mathfrak{n}$ after pulling it back from the $\log$. The measure on $\mathfrak{n}$ is the product of the measures in $F$. Once we choose a measure for $F$ we then choose a measure for $\mu_{\overline{N}}$ and the same is true for $\mu_{\overline{N'}}$. Therefore the quotient  $$\frac{\mu_{\overline{N}}(\overline{N}_{n})}{\mu_{\overline{N'}}(\overline{N'}_{n})}$$ becomes invariant. 
\end{remark}

\begin{remark} We get for $G=GL_m(F)$ that $$\frac{\mu_{\overline{N}}(\overline{N}_{n})}{\mu_{\overline{N'}}(\overline{N'}_{n})}=1.$$ We can choose the lattice $\mathcal{L}$ in this case to be $M_m(\mathfrak{o})$. We even get that $\mu_{\overline{N}}(\overline{N}_{n})=\mu_{\overline{N'}}(\overline{N'}_{n})=1$ after choosing the right measure for $F$. We have that the measure $\mu_N$ is the measure coming from $\mathfrak{n}$ after pulling it back from the $\log$. The measure on $\mathfrak{n}$ is the product of the measures in $F$. Take the measure on $F$ to be the one that is self dual with respect the character $\psi_{\varpi^{-2n}}$. We then get that the measure of the set $\mathfrak{o}$ with respect to the measure on $F$ is $q^n$. That means that the measure of $\log(N_n)=\varpi^n M_m(\mathfrak{o}) \cap \mathfrak{n}$ has measure $|\varpi|^{n \text{dim}(\mathfrak{n})}q^{n \text{dim}(\mathfrak{n})}=1$. The same argument then shows that $\mu_{\overline{N}}(\overline{N}_{n})=\mu_{\overline{N'}}(\overline{N'}_{n})=1$. One may hope $$\frac{\mu_{\overline{N}}(\overline{N}_{n})}{\mu_{\overline{N'}}(\overline{N'}_{n})}=1$$ for a general quasi-split group. 
\end{remark}

\section{The case of $\mathrm{GL}_m(D)$}
We first introduce some notation. We let $D$ be a division algebra with center $F$. We suppose that the index $[D:F]=d^2$. We have a \emph{reduced norm} $\text{rdN}:D\mapsto F^{\times}$ and a \emph{reduced trace} that we denote by $\text{rdTr}:D\mapsto F$. We can extend the valuation $v_F$ of $F$, to a discrete valuation $v_D$ of $D$ by the formula $v_D(x)=\frac{1}{d}v_F(\text{rdN}(x))$. We have an element $\varpi_D\in D$, such that $v_D(\varpi_D)=1$. The set $\mathcal{O}_D=\left\lbrace x\in D: v_D(x)\geq 0\right\rbrace$ is an $\mathcal{O}$ lattice in $D$. If we let $\widetilde{\mathcal{O}}_D=\left\lbrace x\in D: \text{rdTr}(xz)\in \mathcal{O}, \text{for all } z\in \mathcal{O}_D \right\rbrace $, we get that $\widetilde{\mathcal{O}}_D=\varpi_D^{1-d}\mathcal{O}_D$. The statements in the paragraph above can be found in \cite[Chapter 3]{Reiner}.  

Let $\text{tr}_D:D\rightarrow F$ be given by $\text{tr}_D(x)=\text{rdTr}(\varpi_D^{1-d}x )$. We then get that $\mathcal{O}_D$ is self dual with respect to $\text{tr}_D$. We know define a non-degenerate invariant bilinear form $\text{tr}_{M_m(D)}$ on $M_m(D)\times M_m(D)$. Indeed, given $A=(a_{i,j}), B=(b_{i,j})\in M_m(D)$ we let
													$$\text{tr}_{M_m(D)}(AB)=\text{tr}_D(\sum_{i=1}^{m}\sum_{k=1}^ma_{i,k}b_{ki}). $$ 
\noindent We choose the maximal split torus $A$ of $GL_m(D)$ to be those matrices in $GL_m(D)$ that are diagonal with entries in $F$. We fix the minimal parabolic subgroup $Q=LU$, to be the set of upper triangular matrices in $GL_m(D)$. Let $E_{i,j}$, be the matrix with a 1 in the $(i,j)$-entry for $1\leq i,j\leq m$, and zero everywhere else. If $Y$ is a relatively regular element in $\mathfrak{q}$, we get that $Y=\sum_{i=1}^{m-1}y_{i,i+1}E_{i,i+1}$, for some $0\neq y_{i,i+1}\in D, \,1\leq i\leq m-1$. 
Then $B_Y$ defines an alternating bilinear form on $M_m(D)/Y^{\#}\times M_m(D)/Y^{\#}$ (we refer to Section \ref{notation} for the definitions of $B_Y$ and $Y^{\#}$). We consider for a moment the element $Y=\sum_{i=1}^{m-1}E_{i,i+1}$, then for $X=(x_{i,j})\in M_m(D)$ to be in $Y^{\#}$ we must have that $YX=XY$. If we compare the entries in both sides of this last equation we see that it is necessary and sufficient  for $X$ to be upper triangular and that $x_{i,j}=x_{i+1,j+1}$, for $1\leq i,j\leq m-1$. We let $\mathfrak{c}=\left\lbrace (x_{i,j})\in M_m(D): x_{i,m}=0,\, 1\leq i\leq m \right\rbrace$, we then get that the quotient map from $M_m(D)$ onto $M_m(D)/Y^{\#}$ maps $\mathfrak{c}$ isomorphically into $M_m(D)/Y^{\#}$. In fact we have that $M_m(D)=\mathfrak{c}\oplus Y^{\#}$ and thus $B_Y$ defines a non-degenerate alternating bilinear form when restricted to $\mathfrak{c}\times \mathfrak{c}$. If $Y'$ is any relatively regular element of $M_m(D)$, we have that there is an element $t\in L$ such that $^tY=tYt^{-1}=Y'$. Therefore $Y'^{\#}=tY^{\#}t^{-1}$, and thus $^t\mathfrak{c}\oplus Y'^{\#}=M_m(D)$. We then get that $^t\mathfrak{c}$ is isomorphic under the quotient map to $M_m(D)/{Y'}^{\#}$ and hence $B_{Y'}$ defines a non-degenerate alternating form on $^t\mathfrak{c}$.       

The next theorem is just the statement that $\mathrm{GL}_m(D)$ satisfies hypothesis $\mathbf{H}_1$. 									 
\begin{thm} Let $Y$ be a relatively regular element of $M_m(D)$. We can find a lattice $\mathcal{L}$ of $M_m(D)$ that satisfies properties 1)--2) of Lemma \ref{lattice} with respect to $B_Y$, such that if $P=MN$ is a standard parabolic and if $\mathfrak{m}$ denotes the lie algebra of $M$, we get that $\mathfrak{m}\cap \mathcal{L}$ satisfies properties 1)--2) of Lemma \ref{lattice} with respect to $B_{Y_\mathfrak{m}}$. Moreover, we can choose $\mathcal{L}$ to also satisfy $^w\mathcal{L}=\mathcal{L}$, for some full set of representatives of $\widetilde{w}\in {_GW}$.
\end{thm}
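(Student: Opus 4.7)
The plan is to construct the lattice in two stages: first define a canonical lattice for the distinguished relatively regular element $Y_0 = \sum_{i=1}^{m-1} E_{i,i+1}$, then transport by a suitable diagonal element to handle arbitrary $Y$. Concretely, take $\mathcal{L}_0 = M_m(\mathcal{O}_D)$ as the lattice for $Y_0$; given a general relatively regular $Y = \sum y_i E_{i,i+1}$ with $y_i \in D^{\times}$, let $t \in L$ be the diagonal matrix with $t_m = 1$ and $t_i = y_i y_{i+1} \cdots y_{m-1}$, so that $t Y_0 t^{-1} = Y$, and define $\mathcal{L} = t \mathcal{L}_0 t^{-1}$.

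First I would verify the properties for the base pair $(\mathcal{L}_0, Y_0)$. Property $1)$ of Lemma \ref{lattice} is immediate since $\mathcal{L}_0 = \bigoplus_{i,j} \mathcal{O}_D E_{i,j}$ and each summand lies in the graded piece $\mathfrak{g}_{2(j-i)}$ for the distinguished cocharacter $\varphi$. For property $2)$, the easier statement that $\mathcal{L}_0$ is self-dual with respect to $\text{tr}_{M_m(D)}$ reduces entry-by-entry to the self-duality of $\mathcal{O}_D$ with respect to $\text{tr}_D$, which is a direct consequence of $\widetilde{\mathcal{O}}_D = \varpi_D^{1-d}\mathcal{O}_D$ established just above the theorem. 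Next I would upgrade the vector-space decomposition $M_m(D) = \mathfrak{c} \oplus Y_0^{\#}$ from the paragraph preceding the theorem to a lattice-level splitting: since $Y_0^{\#}$ consists of upper-triangular Toeplitz matrices and is thus determined by its last column, any $X \in \mathcal{L}_0$ decomposes uniquely as $X = X_\mathfrak{c} + X_{Y_0^{\#}}$ with both summands in $\mathcal{L}_0$. Hence $(\mathcal{L}_0)_{Y_0} \cong \mathcal{L}_0 \cap \mathfrak{c} = \bigoplus_{1 \le i \le m,\, 1 \le j \le m-1} \mathcal{O}_D E_{i,j}$, and an explicit computation of $B_{Y_0}(E_{ij} a, E_{kl} b)$ on this basis reduces the required self-duality with respect to $\psi \circ B_{Y_0}$ to the self-duality of $\mathcal{O}_D$ with respect to $\text{tr}_D$.

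For Levi compatibility in the base case, a standard Levi $M$ of $\mathrm{GL}_m(D)$ corresponds to a composition $m_1 + \cdots + m_r = m$ and $\mathcal{L}_0 \cap \mathfrak{m} = \prod_{s=1}^r M_{m_s}(\mathcal{O}_D)$ with $Y_{0,\mathfrak{m}}$ relatively regular blockwise, so the previous arguments apply in each block. To pass from $(\mathcal{L}_0, Y_0)$ to the general $(\mathcal{L}, Y) = (t\mathcal{L}_0 t^{-1}, t Y_0 t^{-1})$, I use that $t \in L$ preserves each graded piece $\mathfrak{g}_i$ (since $L$ centralises the image of $\varphi$) and each standard Levi $\mathfrak{m}$, while $\text{Ad}(t)$ intertwines $B_{Y_0}$ with $B_Y$ by invariance of the trace form; hence all the required properties transport from the base pair to $(\mathcal{L}, Y)$ and from $\mathcal{L}_0 \cap \mathfrak{m}$ to $\mathcal{L} \cap \mathfrak{m}$. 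For Weyl invariance, the permutation matrices $w_\sigma \in \mathrm{GL}_m(D)$ form a full set of representatives of the relative Weyl group ${_GW} \cong S_m$ and clearly satisfy $w_\sigma \mathcal{L}_0 w_\sigma^{-1} = \mathcal{L}_0$. Since $t \in L = Z_G(A)$, the conjugated elements $w'_\sigma = t w_\sigma t^{-1}$ remain representatives of the same Weyl classes and by construction satisfy $w'_\sigma \mathcal{L} (w'_\sigma)^{-1} = \mathcal{L}$.

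The main obstacle is property $2)$ for the base pair $(\mathcal{L}_0, Y_0)$: one has to compute $B_{Y_0}(E_{ij}a, E_{kl}b)$ explicitly on the basis of $\mathcal{L}_0 \cap \mathfrak{c}$, track the non-commutative multiplications carefully (the cyclic identity $\text{tr}_D(ab) = \text{tr}_D(ba)$ playing a crucial role), and reduce the self-duality statement for this alternating form to the self-duality of the symmetric pairing $(a,b) \mapsto \text{tr}_D(ab)$ on $\mathcal{O}_D$. Once this calculation is secured, the transport by $t$ and the Weyl equivariance become routine verifications.
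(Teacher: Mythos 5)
Your proposal follows essentially the same route as the paper: base pair $(\mathcal{L}_0,Y_0)=(M_m(\mathcal{O}_D),\sum E_{i,i+1})$, Levi compatibility via the block decomposition $M_{m_1}(\mathcal{O}_D)\oplus\cdots\oplus M_{m_r}(\mathcal{O}_D)$, and transport to a general relatively regular $Y$ by conjugating with a suitable $t\in L$. One caution on the step you flag as the main obstacle: $B_{Y_0}(E_{ij}a,E_{kl}b)=(\delta_{jk}\delta_{i-1,l}-\delta_{li}\delta_{k-1,j})\,\text{tr}_D(ab)$, so on the basis of $\mathcal{L}_0\cap\mathfrak{c}$ each matrix unit $E_{ij}$ pairs nontrivially with \emph{two} others, $E_{j,i-1}$ and $E_{j+1,i}$; the reduction to self-duality of $\mathcal{O}_D$ under $\text{tr}_D$ is therefore not immediate and needs the recursion the paper carries out in equation~(\ref{equation}), where the boundary terms force $b_{k,m-1},b_{k+1,1}\in\mathcal{O}_D$ and the interior relations $b_{k,i-1}-b_{k+1,i}\in\mathcal{O}_D$ then propagate integrality to every entry (equivalently, one must check the Gram matrix of this pairing of matrix units is unimodular over $\mathcal{O}$). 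You also supply an argument for the Weyl-invariance clause $^w\mathcal{L}=\mathcal{L}$ (permutation matrices preserve $M_m(\mathcal{O}_D)$, then conjugate representatives by $t$), which the paper's proof does not actually address; this is a genuine and welcome addition.
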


\begin{proof} We consider first the case where $Y=\sum_{i=1}^{m-1}E_{i,i+1}$. We claim that in this case $M_m(\mathcal{O}_D)$ satisfies the required conditions.  We see with almost no effort that $M_m(\mathcal{O}_D)$ satisfies condition 1) of Lemma \ref{lattice}. To prove that $M_m(\mathcal{O}_D)$ satisfies condition $2)$ of Lemma \ref{lattice} we make use of the notation above. We need to show that $M_m(\mathcal{O}_D)\cap \mathfrak{c}$ is self dual with respect to $B_Y$. Let $B=(b_{i,j})\in \mathfrak{c}$ be such that for all $A=(a_{i,j})\in M_m(\mathcal{O}_D)\cap \mathfrak{c}$, $B_Y(A,B)\in \mathcal{O}$. Recall that the last column of the elements in $\mathfrak{c}$ is zero. We then have 

\begin{align} 
B_Y(A,B)&=\text{tr}_{M_m(D)}(Y[A,B])=\text{tr}_D(\sum_{i=2}^{m}\sum_{k=1}^{m}a_{i,k}b_{k,i-1}-b_{i,k}a_{k,i-1})\notag\\
&=\text{tr}_D(\sum_{i=2}^{m}\sum_{k=1}^{m-1}a_{i,k}b_{k,i-1}-\sum_{i=2}^{m}\sum_{k=1}^{m-1}b_{i,k}a_{k,i-1})\, (\text{last column is zero)}\notag\\
&=\text{tr}_D(\sum_{i=2}^{m}\sum_{k=1}^{m-1}a_{i,k}b_{k,i-1}-\sum_{k=2}^{m}\sum_{i=1}^{m-1}b_{k,i}a_{i,k-1})\,(k\leftrightarrow i, \text{in the second sum})\notag\\
&=\text{tr}_D(\sum_{i=2}^{m}\sum_{k=1}^{m-1}a_{i,k}b_{k,i-1}-\sum_{k=1}^{m-1}
\sum_{i=1}^{m-1}b_{k+1,i}a_{i,k})\notag\\   
&=\sum_{i=2}^{m}\sum_{k=1}^{m-1}\text{tr}_D(a_{i,k}b_{k,i-1})-\sum_{i=1}^{m-1}\sum_{k=1}^{m-1}\text{tr}_D(a_{i,k}b_{k+1,i})\notag\\
&=\sum_{i=2}^{m-1}\sum_{k=1}^{m-1}\text{tr}_D(a_{i,k}(b_{k,i-1}-b_{k+1,i}))+\sum_{k=1}^{m-1}\text{tr}_D(a_{m,k}b_{k,m-1})-\sum_{k=1}^{m-1}\text{tr}_D(a_{1,k}b_{k+1,1})\label{equation} \tag{**}
\end{align}   
We see from this last equation and from the fact that $\mathcal{O}_D$ is self dual with respect to $\text{tr}_D$, that $b_{k,m-1},b_{k+1,1}\in \mathcal{O}_D$ and that $b_{k,i-1}-b_{k+1,i}\in \mathcal{O}_D$, for $1\leq k\leq m-1,\,\, 2\leq i\leq m-1$. We conclude from these recursive relations that $b_{k,i}\in \mathcal{O}_D$ for $1\leq i,k\leq m$. If $P=MN$ is any standard parabolic subgroup with lie algebra $\mathfrak{m}$, we have that $$\mathfrak{m}\cong M_{m_1}(D)\oplus M_{m_2}(D)\oplus\cdots \oplus M_{m_k}(D)$$ for some positive integers $m_1,m_2,\ldots m_k$. We then get that $$\mathfrak{m}\cap M_m(\mathcal{O_D})\cong M_{m_1}(\mathcal{O}_D)\oplus M_{m_2}(\mathcal{O}_D)\oplus\cdots \oplus M_{m_k}(\mathcal{O}_D).$$ It then follows from what we just proved that $\mathfrak{m}\cap M_m(\mathcal{O_D})$ satisfies properties 1) and 2) of Lemma \ref{lattice}. The general case now follows by the fact that every other relatively regular element is obtained by conjugating $Y$ by an element of the minimal Levi, and thus conditions 1) and 2) of Lemma \ref{lattice} are satisfied by transport de structure. 
\end{proof}

\begin{thm} The group $G=\mathrm{GL}_m(D)$ satisfies hypothesis $\mathbf{H}_2$. 
\end{thm}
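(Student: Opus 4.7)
By Proposition \ref{functions}, combined with Theorem \ref{2} and the hypothesis $\mathbf{H}_1$ verified in the preceding theorem, the assignment $v \mapsto f_{(0,\sigma,v)}$ gives an injection $W_n \hookrightarrow I(0,\sigma)^{G_n,\chi_n}$; by Theorem \ref{2} the two sides compute $\dim W_{Y_\mathfrak{m},\varphi}$ and $\dim I(0,\sigma)_{Y,\varphi}$ for $n$ sufficiently large. So $\mathbf{H}_2$ is equivalent to surjectivity of this injection, that is, every $f \in I(0,\sigma)^{G_n,\chi_n}$ must equal $f_{(0,\sigma,f(1))}$ for some $f(1)\in W_n$. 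Since on the open $P$--$G_n$ double coset $PG_n = P\overline{N}_n$ the function $f$ is automatically of that form by its bi-equivariance, the task reduces to showing $f$ vanishes off $PG_n$.

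The plan is to prove vanishing pointwise using the following reduction. For each $g\in G$, set $H_g=\{h\in G_n:ghg^{-1}\in P\}$. For $h\in H_g$ the two equivariances together impose
$$\chi_n(h) \, f(g) \;=\; \delta_P(ghg^{-1})^{-1/2}\,\sigma(ghg^{-1})\,f(g);$$
hence $f(g)=0$ as soon as we can find $h\in H_g$ with $\chi_n(h)\neq 1$ and $ghg^{-1}\in N$, the unipotent radical of $P$, because then $\sigma(ghg^{-1})=\mathrm{id}$ and $\delta_P(ghg^{-1})=1$. By left $P$- and right $G_n$-bi-equivariance it suffices to carry this out for one representative $g=w$ in each non-trivial double coset, and by $\mathbf{H}_1$ these representatives may be chosen inside $N_G(A)$ as permutation matrices normalising $\mathcal{L}=M_m(\mathcal{O}_D)$, so that $wG_nw^{-1}=G_n$. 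The condition $\tilde{w}\notin{_MW}$ translates, for $\mathrm{GL}_m(D)$ with $_GW=S_m$, to the underlying permutation failing to preserve the partition of $\{1,\ldots,m\}$ defining $M$; by an elementary combinatorial argument on the \emph{block-profile sequence} of the permutation, there exists a simple root $\alpha\in\Delta$ with $w\alpha$ a negative root lying outside $\langle \theta\rangle$, equivalently $w\overline{U}_\alpha w^{-1}\subset N$. Choose $h\in\overline{U}_{\alpha,n}$ with $\chi_n(h)\neq 1$---available because $Y_\alpha\neq 0$ by the relative regularity of $Y$, so that $\chi_n|_{\overline{U}_{\alpha,n}}$ is a non-trivial additive character on the compact group $\overline{U}_{\alpha,n}$---then $h\in H_w$, $whw^{-1}\in N$, and the displayed identity forces $f(w)=0$.

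The main obstacle is the descent step: guaranteeing, for every $\tilde{w}\notin{_MW}$, the existence of a simple root $\alpha$ sent by $w$ to a negative root outside $\langle \theta\rangle$. For $\mathrm{GL}_m$ this is the elementary fact that a permutation not preserving a prescribed set-partition must have a descent whose two values lie in distinct blocks, which follows from the observation that the block-profile sequence $(j_k)_{k=1}^m$, with $j_k$ the block containing $\sigma(k)$, is not weakly increasing to the identity profile and hence must strictly decrease at some step. A subsidiary technical point---that after left-$P$, right-$G_n$ action every $g\notin PG_n$ can be brought to such a Weyl representative $w\in N_G(A)$---follows from the Iwasawa decomposition $G=PK$ combined with the Bruhat--Tits decomposition of $K$ modulo the principal congruence subgroup $G_n$, which is transparent for $K=\mathrm{GL}_m(\mathcal{O}_D)$ since the Weyl-element representatives can be taken as honest permutation matrices lying in $K$.
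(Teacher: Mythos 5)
The key identity you derive is correct, and your overall strategy (prove that every $f\in I(0,\sigma)^{G_n,\chi_n}$ vanishes off the open cell $P\overline{N}_n=PG_n$, so that the injective map $v\mapsto f_{(0,\sigma,v)}$ is onto) is a legitimately different route from the paper's. The paper instead works on the level of Harish-Chandra character expansions: it applies the induction formula (equation (9) of Moeglin--Waldspurger) to write $C_{\mathcal{O}_Y}$ for $I(\nu,\sigma)$ as a sum over nilpotent $M$-orbits that induce to $\mathcal{O}_Y$, then uses Rao's lemma to show the only contributing $M$-orbit is $\mathcal{O}_{Y_{\mathfrak m}}$, and finally shows $\mathrm{Stab}_G(Y)\subset P$ so the index factor is $1$, giving $C_{\mathcal{O}_Y}=C_{\mathcal{O}_{Y_{\mathfrak m}}}$; combined with Theorem~\ref{2} this yields $\mathbf{H}_2$. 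Your approach would, if correct, be more elementary and self-contained, bypassing the MW formula entirely.

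However, there is a genuine gap in your reduction step. You claim that every $g\notin PG_n$ can be moved, by left $P$- and right $G_n$-translation, to a Weyl representative $w\in N_G(A)$, and you justify this by invoking a ``Bruhat--Tits decomposition of $K$ modulo $G_n$''. No such decomposition exists: $K/G_n\cong\mathrm{GL}_m(\mathcal{O}_D/\varpi^n\mathcal{O}_D)$ is $\mathrm{GL}_m$ over a finite \emph{local ring}, not a field, and has no Bruhat decomposition for $n>1$. Concretely, already for $G=\mathrm{GL}_2(F)$, $P=B$, and any $n\geq 1$, the element $g=\left(\begin{smallmatrix}1&0\\1&1\end{smallmatrix}\right)$ lies in neither $BG_n$ nor $Bw_0G_n$: comparing the bottom row shows $B\left(\begin{smallmatrix}1&0\\1&1\end{smallmatrix}\right)G_n$, $BG_n$, and $Bw_0G_n$ are three distinct double cosets (their ratios of bottom-row entries have valuation $0$, $\geq n$, and $\leq -n$ respectively). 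So the double cosets in $P\backslash G/G_n$ are strictly more numerous than $|{_GW}/{_MW}|$, and your ``block-profile'' combinatorics, which is tailored to honest permutation matrices, does not reach them. The mechanism itself does still apply to such $g$---in the $\mathrm{GL}_2$ example one can take $h=\left(\begin{smallmatrix}1+a&a\\-a&1-a\end{smallmatrix}\right)$ with $n\leq v(a)<2n$, which is conjugated by $g$ into $N$ and has $\chi_n(h)\neq 1$---but producing such an $h$ for an arbitrary non-Weyl double coset requires an argument you have not given. As written, the proof establishes vanishing only on the Weyl double cosets and leaves all the others untouched.
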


\begin{proof} Let $(\sigma,W) $ be a $(Y,\varphi)$--generic representation. Let $\mathcal{O}_{Y_{\mathfrak{m}}}$ be the $M$--orbit of $Y_{\mathfrak{m}}$ under the adjoint action. Let $C_{\mathcal{O}_{Y_{\mathfrak{m}}}}$ be the coefficient in the Harish-Chandra character expansion of $(\sigma\otimes \nu ,W)$ ($C_{\mathcal{O}_{Y_{\mathfrak{m}}}}$ is independent of $\nu$). Likewise, let $C_{\mathcal{O}_{Y}}$ be the coefficient in the Harish-Chandra character expansion of $I(\nu,\sigma)$. Let $\mathcal{O}(\mathfrak{m})$ denote the set of nilpotent orbits in $\mathfrak{m}$, and let $\mathcal{O}(\mathfrak{g})$ denote the nilpotent orbits in $\mathfrak{g}$. For an orbit $\mathcal{O}_{\mathfrak{m}}\in \mathcal{O}(\mathfrak{m}) $ we define the set $\text{ind}_M^G(\mathcal{O}_{\mathfrak{m}})$ to be the set of all orbits $\mathcal{O}\in \mathcal{O}(\mathfrak{g})$ such that $\mathcal{O}\cap \left(\mathcal{O}_\mathfrak{m}+\mathfrak{n}\right)$
is open set of $\mathcal{O}_\mathfrak{m}+\mathfrak{n}$. Equation (9) in \cite{MW} (please see remark) with our notation reads 
$$\text{tr}(I(\nu,\sigma)(f))=\sum\limits_{\mathcal{O}_{\mathfrak{m}}\in \mathcal{O}(\mathfrak{m})}\sum\limits_{\mathcal{O}\in \text{ind}_M^G(\mathcal{O}_{\mathfrak{m}})}C_{\mathcal{O}_{\mathfrak{m}}}|Stab_{G}(X_{\mathcal{O}})/Stab_P(X_{\mathcal{O}})|\widehat{\mu}_{\mathcal{O}}(f\circ \exp)$$  
Where $X_{\mathcal{O}}$ is any element in $\mathcal{O}\cap \left(\mathcal{O}_\mathfrak{m}+\mathfrak{n}\right)$, and $Stab_{G}(X_{\mathcal{O}})$ ( resp. $Stab_P(X_{\mathcal{O}})$) is the stabilizer of $X_{\mathcal{O}}$ under the adjoin action of $G$ (resp.  the adjoint action of $P$).  Let $\mathcal{E}$ be the set of $\mathcal{O}_{\mathfrak{m}}\in \mathcal{O}(\mathfrak{m})$ such that $\mathcal{O}_{Y}\in \text{ind}_M^G(\mathcal{O}_{\mathfrak{m}})$.  We therefore obtain that $$C_{\mathcal{O}_{Y}}=\sum\limits_{\mathcal{E}}C_{\mathcal{O}_{\mathfrak{m}}}|Stab_{G}(X_{\mathcal{O}})/Stab_P(X_{\mathcal{O}})|.$$

Consider $\mathcal{O}_{\mathfrak{m}}\in \mathcal{E}$, then $\mathcal{O}_{Y}\cap \left(\mathcal{O}_\mathfrak{m}+\mathfrak{n}\right)$ is open and therefore from (7) in \cite[pg. 443]{MW} is a $P$ orbit.
 From Lemma 1 in \cite{Rao} we get that $P_{0}$ orbit of $Y$ which is contained in the $P$ orbit of $Y$ contains an element $X$ that is upper triangular and whose (i,i+1)-entries are non-zero for $1\leq i \leq m-1$. We have that $X\in \mathcal{O}_{Y}\cap \left(\mathcal{O}_\mathfrak{m}+\mathfrak{n}\right) $, so if we write $X=X_{\mathfrak{m}}+N$, for $X_{\mathfrak{m}}\in \mathcal{O}_{\mathfrak{m}}$ and $N \in \mathfrak{n}$, we get that the (i,i+1)-entries of $X_{\mathfrak{m}}$ are not-zero and $X_{\mathfrak{m}}$ is upper triangular.  Using again Lemma 1 in \cite{Rao}  we get that $X_{\mathfrak{m}}$ is in the orbit of $Y_{\mathfrak{m}}$ and therefore $\mathcal{O}_{\mathfrak{m}}=\mathcal{O}_{Y_\mathfrak{m}}$. That means that $\mathcal{E}$ consists of the single orbit $\mathcal{O}_{Y_\mathfrak{m}}$. Hence
 $$C_{\mathcal{O}_Y}=C_{\mathcal{O}_{Y_\mathfrak{m}}}|Stab_{G}(Y)/Stab_P(Y)|.$$ 
We claim that $Stab_{G}(Y)\subset P$, which implies $C_{\mathcal{O}_{Y_\mathfrak{m}}}=C_{\mathcal{O}_Y}$. To prove the claim we first suppose that $Y$ is the relatively regular element with $1's$ in the entries above the diagonal zero everywhere else. Let $g\in G$ be such that $gY=Yg$. After comparing entries of the last equation we arrive to the fact that $g$ is upper triangular and thus contained in the minimal parabolic $Q=LU$. Since any other relatively regular element is a conjugate of $Y$ by an element in $L$, we get that $Stab_G(Y)\subset Q\subset P$. This finishes the proof of the theorem. 
\end{proof}

\begin{remark} In the proof of equation (9) in \cite{MW} they use the result \cite[IV.2.25]{Springer&Steinberg} and from there they conclude that for a nilpotent element $X$ in the lie algebra of $G$, the natural map $Stab_{G}(X)$ into $Stab_{\mathbb{G}(\overline{F})}(X)/Stab_{\mathbb{G}(\overline{F})}(X)^{\circ}$ is surjective. Here $\mathbb{G}(\overline{F})$ denotes the points of the algebraically close field $\overline{F}$ and $Stab_{\mathbb{G}(\overline{F})}(X)^{\circ}$ is the connected component of $Stab_{\mathbb{G}(\overline{F})}(X)$. The result \cite[IV.2.25]{Springer&Steinberg} is stated for classical groups and we can not use this result for the case $G=\text{GL}_m(D)$. However in the case $G=\text{GL}_m(D)$ we get that $Stab_{\mathbb{G}(\overline{F})}(Y)$ is connected and therefore the surjectivity of  $Stab_{G}(X)$ into $Stab_{\mathbb{G}(\overline{F})}(X)/Stab_{\mathbb{G}(\overline{F})}(X)^{\circ}$ is trivial. To see that that $Stab_{\mathbb{G}(\overline{F})}(X)$ is connected in the Zariski topology we first see that $\mathbb{G}(\overline{F})=\text{GL}_{md}(\overline{F})$. Let $M_{md}(\overline{F})$ be the set of $md\times md$ matrices over $\overline{F}$ and consider $X$ as an element in $M_{md}(\overline{F})$. The solutions so the equation $XZ=ZX$ for $Z\in M_{md}(\overline{F})$ is an affine space. We get that the set of matrices that satisfies $XZ=ZX$ and $\det(Z)\neq 0$ is a principal open set in affine space and thus connected.   
\end{remark}

\bibliographystyle{amsalpha}
\bibliography{Coefficients}{}

\end{document}